\numberwithin{equation}{section}
\numberwithin{figure}{section}
\theoremstyle{plain}
\newtheorem{thm}{\protect\theoremname}[section]
\theoremstyle{plain}
\newtheorem{cor}[thm]{\protect\corollaryname}
\theoremstyle{remark}
\newtheorem{rem}[thm]{\protect\remarkname}
\theoremstyle{plain}
\newtheorem{prop}[thm]{\protect\propositionname}
\theoremstyle{plain}
\newtheorem{lem}[thm]{\protect\lemmaname}
\providecommand{\corollaryname}{Corollary}
\providecommand{\lemmaname}{Lemma}
\providecommand{\propositionname}{Proposition}
\providecommand{\remarkname}{Remark}
\providecommand{\theoremname}{Theorem}
\begin{document}
\global\long\def\F{\mathrm{\mathbf{F}} }%
\global\long\def\Aut{\mathrm{Aut}}%
\global\long\def\C{\mathbf{C}}%
\global\long\def\H{\mathbb{H}}%
\global\long\def\U{\mathbf{U}}%
\global\long\def\P{\mathcal{P}}%
\global\long\def\ext{\mathrm{ext}}%
\global\long\def\hull{\mathrm{hull}}%
\global\long\def\triv{\mathrm{triv}}%
\global\long\def\Hom{\mathrm{Hom}}%

\global\long\def\trace{\mathrm{tr}}%
\global\long\def\End{\mathrm{End}}%

\global\long\def\L{\mathcal{L}}%
\global\long\def\W{\mathcal{W}}%
\global\long\def\E{\mathbb{E}}%
\global\long\def\SL{\mathrm{SL}}%
\global\long\def\R{\mathbf{R}}%
\global\long\def\Pairs{\mathrm{PowerPairs}}%
\global\long\def\Z{\mathbf{Z}}%
\global\long\def\rs{\to}%
\global\long\def\A{\mathcal{A}}%
\global\long\def\a{\mathbf{a}}%
\global\long\def\rsa{\rightsquigarrow}%
\global\long\def\D{\mathbf{D}}%
\global\long\def\b{\mathbf{b}}%
\global\long\def\df{\mathrm{def}}%
\global\long\def\eqdf{\stackrel{\df}{=}}%
\global\long\def\ZZ{\mathcal{Z}}%
\global\long\def\Tr{\mathrm{Tr}}%
\global\long\def\N{\mathbf{N}}%
\global\long\def\std{\mathrm{std}}%
\global\long\def\HS{\mathrm{H.S.}}%
\global\long\def\e{\mathbf{e}}%
\global\long\def\c{\mathbf{c}}%
\global\long\def\d{\mathbf{d}}%
\global\long\def\AA{\mathbf{A}}%
\global\long\def\BB{\mathbf{B}}%
\global\long\def\u{\mathbf{u}}%
\global\long\def\v{\mathbf{v}}%
\global\long\def\spec{\mathrm{spec}}%
\global\long\def\Ind{\mathrm{Ind}}%
\global\long\def\half{\frac{1}{2}}%
\global\long\def\Re{\mathrm{Re}}%
\global\long\def\Im{\mathrm{Im}}%
\global\long\def\Rect{\mathrm{Rect}}%
\global\long\def\Crit{\mathrm{Crit}}%
\global\long\def\Stab{\mathrm{Stab}}%
\global\long\def\SL{\mathrm{SL}}%
\global\long\def\TF{\mathsf{TF}}%
\global\long\def\p{\mathfrak{p}}%
\global\long\def\j{\mathbf{j}}%
\global\long\def\uB{\underline{B}}%
\global\long\def\tr{\mathrm{tr}}%
\global\long\def\rank{\mathrm{rank}}%
\global\long\def\K{\mathcal{K}}%
\global\long\def\hh{\mathbb{H}}%
\global\long\def\h{\mathfrak{h}}%

\global\long\def\EE{\mathcal{E}}%
\global\long\def\PSL{\mathrm{PSL}}%
\global\long\def\G{\mathcal{G}}%
\global\long\def\Int{\mathrm{Int}}%
\global\long\def\acc{\mathrm{acc}}%
\global\long\def\awl{\mathsf{awl}}%
\global\long\def\even{\mathrm{even}}%
\global\long\def\z{\mathbf{z}}%
\global\long\def\id{\mathrm{id}}%
\global\long\def\CC{\mathcal{C}}%
\global\long\def\cusp{\mathrm{cusp}}%
\global\long\def\i{\mathrm{int}}%
\global\long\def\new{\mathrm{new}}%

\global\long\def\LL{\mathbb{L}}%
\global\long\def\M{\mathbb{M}}%

\title{Near optimal spectral gaps for hyperbolic surfaces}
\author{Will Hide and Michael Magee}

\maketitle
\date{\vspace{-5ex}}
\begin{abstract}
{\footnotesize{}We prove that if $X$ is a finite area non-compact
hyperbolic surface, then for any $\epsilon>0$, with probability tending
to one as $n\to\infty$, a uniformly random degree $n$ Riemannian
cover of $X$ has no eigenvalues of the Laplacian in $[0,\frac{1}{4}-\epsilon)$
other than those of $X$, and with the same multiplicities. }{\footnotesize\par}

{\footnotesize{}As a result, using a compactification procedure due
to Buser, Burger, and Dodziuk, we settle in the affirmative the question
of whether there exists a sequence of closed hyperbolic surfaces with
genera tending to infinity and first non-zero eigenvalue of the Laplacian
tending to $\frac{1}{4}$. }{\footnotesize\par}
\end{abstract}
{\small{}\tableofcontents{}}{\small\par}

\section{Introduction}

Let $X$ be a finite-area hyperbolic surface, that is, a smooth surface
with Riemannian metric of constant curvature $-1$. In this paper,
all hyperbolic surfaces are assumed complete and orientable. The Laplacian
operator $\Delta_{X}$ on $L^{2}(X)$ has spectrum in $[0,\infty)$.
The bottom of the spectrum is always a discrete eigenvalue at $0$
that is simple if and only if $X$ is connected.

If $X$ is non-compact, which is the first focus of our paper, then
the spectrum of $\Delta_{X}$ in $[0,\frac{1}{4})$ consists of finitely
many discrete eigenvalues and the spectrum is absolutely continuous
in $[\frac{1}{4},\infty)$ \cite{LP}. As such, the \emph{spectral
gap} between $0$ and the rest of the spectrum has size at most $\frac{1}{4}$. 

We are interested in the size of this spectral gap for random surfaces.
The random model we use is the random covering model from \cite{MN1,MageeNaudPuder,MageeNaud2}.
For any $n\in\N$, the space of degree $n$ Riemannian covering spaces
of $X$ is a finite set that we equip with the uniform probability
measure. We say that a family of events, depending on $n$, holds
\emph{asymptotically almost surely }(a.a.s.) if they hold with probability
tending to one as $n\to\infty$. We also note that any eigenvalue
of the Laplacian on $X$ will be an eigenvalue of any cover of $X$,
with at least as large multiplicity. Therefore these `old eigenvalues'
must always be taken into account. As we explain in $\S\S$\ref{subsec:Construction},
a theorem of Dixon \cite{Dixon} implies that a uniformly random degree
$n$ covering space of a fixed connected non-compact $X$ will be
connected a.a.s.

The first theorem of the paper is the following.
\begin{thm}
\label{thm:main-theorem}Let $X$ be a finite-area non-compact hyperbolic
surface. Let $X_{n}$ denote a uniformly random degree $n$ covering
space of $X$. For any $\epsilon>0$, a.a.s.
\[
\spec(\Delta_{X_{n}})\cap\left[0,\frac{1}{4}-\epsilon\right)=\spec(\Delta_{X})\cap\left[0,\frac{1}{4}-\epsilon\right)
\]
and the multiplicities on both sides are the same.
\end{thm}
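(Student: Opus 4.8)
The plan is to represent the Laplacian spectrum of a random cover $X_n$ via the Selberg trace formula / a resolvent-type analysis, but the cleanest route, following the strategy of Magee--Naud and especially Magee--Naud--Puder, is to work directly with the decomposition of $L^2(X_n)$ into functions pulled back from $X$ and their orthogonal complement. Write $X = \Gamma \backslash \hh$ with $\Gamma \le \PSL(2,\R)$, so that degree $n$ covers correspond to $\Hom(\Gamma, S_n)/S_n$-type data; the random cover is determined by a uniformly random $\phi \in \Hom(\Gamma, S_n)$ (restricted to generators), and $L^2(X_n) \cong L^2(X) \oplus L^2(X) \otimes (\C^n)^{\std}$ roughly speaking, the second summand carrying the ``new'' spectrum. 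The old eigenvalues appear on the first summand with exactly the same multiplicities, so the content of the theorem is entirely about the new part: one must show that a.a.s.\ the new Laplacian has no spectrum in $[0,\tfrac14 - \epsilon)$.

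The key analytic input is a bound on the $L^2 \to L^2$ operator norm, on the new subspace, of a suitable test operator built from the resolvent or the wave/heat semigroup of $X$. Concretely, I would follow the approach of controlling $\Tr$ of iterated operators against the new representation: for the non-compact $X$ one localizes in the cusp (using that the continuous spectrum already starts at $\tfrac14$, so the cusp contributes nothing below $\tfrac14$) and in the compact core one runs a probabilistic estimate on words in the random permutations. The essential probabilistic ingredient is the strong convergence / expected-spectral-radius machinery: for a random $\phi \in \Hom(\Gamma,S_n)$, the expected number of fixed points of $\phi(\gamma)$ for a non-identity $\gamma$ is $1 + O(1/n)$ with corrections governed by the ``subgroup growth''/Euler-characteristic statistics of $\Gamma$ (Nica, Linial--Puder, Magee--Puder), and more refined: for any fixed $\gamma \neq e$, $\E[\mathrm{fix}(\phi(\gamma))] = 1 + c(\gamma) n^{-\pi(\gamma)} + \cdots$ where $\pi(\gamma) \ge 1$. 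Feeding this into the trace formula for the new part, the leading term matches the spectral measure of the continuous spectrum of $X$ itself, which lives at $\tfrac14$, and the error terms are summable provided one has enough decay — this is exactly what gives the spectral radius of the new Laplacian resolvent operator bounded by (essentially) the $\tfrac14$-threshold.

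The main obstacle, and where the real work lies, is twofold. First, one must promote an expectation bound to an a.a.s.\ statement: this requires either a high-moment estimate (controlling $\E[(\Tr \cdots)^{2k}]$ and applying Markov) uniformly as $k$ grows with $n$, or a concentration argument; the Magee--Naud--Puder technology handles this by carefully estimating $\E[\prod \mathrm{fix}(\phi(w_i))]$ for tuples of words via the representation theory of $S_n$ / the Benjamini--Schramm-type local structure, and one must track the dependence on word length to close the loop — this is the genuinely hard, technical heart. Second, the non-compactness forces care with the cusps: the test function/operator must be chosen so that its Schwartz kernel is supported away from (or decays into) the cusp, or one works with a cutoff Laplacian and separately argues the cusp contributes only continuous spectrum $\ge \tfrac14$; reconciling the cutoff with the global spectral statement (e.g.\ via the Buser--Burger--Dodziuk-style comparison, or a direct argument that low new eigenfunctions cannot concentrate in the cusp) is the second delicate point. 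Once these are in place, letting $\epsilon \to 0$ along a sequence and taking the optimal test operator (pushing the spectral-radius bound to $\tfrac14 - \epsilon$) yields the claim; the multiplicity statement is immediate from the $L^2$ decomposition since the old part is an exact isometric copy of $L^2(X)$.
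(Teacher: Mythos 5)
There is a genuine gap, and it is at the heart of your argument: the probabilistic engine you propose --- expected fixed points $\E[\mathrm{fix}(\phi(\gamma))]=1+O(n^{-1})$ for words, fed into a trace formula and boosted to a.a.s.\ statements by high-moment estimates --- is precisely the Broder--Shamir/Magee--Naud--Puder machinery, and that machinery is known to saturate at a relative spectral gap of $\tfrac{3}{16}-\epsilon$, not $\tfrac14-\epsilon$. The obstruction is structural: to push the spectral parameter to $\tfrac14$ one needs control of words of length on the order of $\log n$ with error terms fine enough to distinguish non-simple geodesics, and the fixed-point/moment estimates do not provide this (this is exactly the ``$\tfrac{3}{16}$ barrier''). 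So the step where you say the error terms are ``summable provided one has enough decay'' is the place the argument fails, not merely a technical chore. The actual proof avoids the trace formula altogether. One builds a parametrix for the resolvent on the new part: $\M_{\phi}(s)=\M_{\phi}^{\i}(s)+\M_{\phi}^{\cusp}(s)$ with $(\Delta_{X_{\phi}}-s(1-s))\M_{\phi}(s)=1+\LL_{\phi}^{\i}(s)+\LL_{\phi}^{\cusp}(s)$, where the cusp error is made deterministically small (norm $\le\tfrac15$) by choosing slowly varying cutoffs and using that the model Laplacian on a parabolic cylinder satisfies $\langle\Delta f,f\rangle\ge\tfrac14\|f\|^2$. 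The interior error, after truncating the free resolvent kernel at radius $T$, becomes a \emph{finite} sum $\sum_{\gamma}a_{\gamma}(s)\otimes\rho_{\phi}(\gamma)$ with compact operator coefficients, and its norm is controlled a.a.s.\ by the Bordenave--Collins strong convergence theorem for random permutation representations, via Pisier's linearization and finite-rank approximation of the $a_{\gamma}(s)$; the limiting operator on $L^{2}(\H)$ is then estimated by the Selberg transform and is small for $T$ large. A net in $s\in[s_{0},1]$ plus a Lipschitz deviation estimate makes the bound uniform, and invertibility of $1+\LL_{\phi}(s)$ rules out new eigenvalues below $\tfrac14-\epsilon$.

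Two further points in your sketch are too loose to stand on their own. First, ``the cusp contributes only continuous spectrum $\ge\tfrac14$'' is a statement about the base surface; on the random cover you still must preclude \emph{new} discrete eigenvalues whose eigenfunctions live partly in the cusp regions, which is why the paper glues a genuine cusp parametrix (a model resolvent conjugated through the covering map) to the interior one, with commutator errors controlled by the gradient and Laplacian of the cutoff. Second, a norm bound on a single test operator at a fixed $s$ does not by itself exclude spectrum on an interval; you need either uniformity in $s$ (the net-plus-deviations argument) or an explicit functional identity converting the bound into a bounded right inverse of $\Delta-s(1-s)$ on the new subspace, together with self-adjointness to rule out kernel. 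The multiplicity statement, as you say, is immediate from the orthogonal decomposition $L^{2}(X_{\phi})\cong L^{2}(X)\oplus L_{\new}^{2}(X_{\phi})$; that part is fine.
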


This theorem is the analog, for finite-area non-compact hyperbolic
surfaces of Friedman's theorem \cite{Friedman} (formerly Alon's conjecture
\cite{Alon}) stating that random $d$-regular graphs have almost
optimal spectral gaps. Friedman also proposed in \cite{FriedmanRelative}
that a variant of Alon's conjecture should hold for random covers
of any fixed finite graph, and this extended conjecture was recently
proved by Bordenave and Collins \cite{BordenaveCollins}. 

If $X$ has Euler characteristic $\chi(X)=-1$, then a result of Otal
and Rosas \cite[Thm. 2]{OtalRosas} states that $\spec(\Delta_{X})\cap\left[0,\frac{1}{4}\right)$
is just the multiplicity one eigenvalue at $0$. Taking such an $X$
as the base surface in Theorem \ref{thm:main-theorem} we therefore
obtain the following corollary.
\begin{cor}
\label{cor:There-exist-finite-area}There exist connected finite-area
non-compact hyperbolic surfaces $X_{i}$ with $\chi(X_{i})\to-\infty$
and 
\[
\inf\left(\spec(\Delta_{X_{i}})\cap(0,\infty)\right)\to\frac{1}{4}.
\]
In fact one can take $\chi(X_{i})=-i$.
\end{cor}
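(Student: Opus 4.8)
The plan is to deduce the corollary from Theorem~\ref{thm:main-theorem} by taking as base surface a fixed finite-area non-compact hyperbolic surface $X$ with $\chi(X)=-1$; concretely one may take $X$ to be the thrice-punctured sphere, or a once-punctured torus. Since a degree $n$ Riemannian cover of $X$ has Euler characteristic $n\chi(X)=-n$, covers of $X$ already realize all the Euler characteristics $-n$; what has to be arranged is that the bottom non-zero eigenvalue of the Laplacian on such covers can be pushed arbitrarily close to $\tfrac14$.

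First I would record the spectral input on the base surface: by the Otal--Rosas theorem \cite[Thm.~2]{OtalRosas}, $\spec(\Delta_X)\cap[0,\tfrac14)$ consists only of the simple eigenvalue at $0$. Now fix any sequence $\epsilon_k\downarrow 0$ with $0<\epsilon_k<\tfrac14$. Applying Theorem~\ref{thm:main-theorem} with $\epsilon=\epsilon_k$, together with the fact that a uniformly random degree $n$ cover of $X$ is connected a.a.s.\ (Dixon \cite{Dixon}; this is in any case forced by the multiplicity assertion of Theorem~\ref{thm:main-theorem}, which pins the multiplicity of the eigenvalue $0$ at one), I obtain for each $k$ a threshold $N_k$, which I take strictly increasing in $k$, such that for every $n\geq N_k$ a uniformly random degree $n$ cover $X_n$ of $X$ is, with probability at least $\tfrac12$, connected and satisfies $\spec(\Delta_{X_n})\cap[0,\tfrac14-\epsilon_k)=\{0\}$. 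In particular at least one such cover exists; being connected, non-compact and of finite area, it has $0$ as a simple eigenvalue and has continuous spectrum filling $[\tfrac14,\infty)$, so its bottom non-zero eigenvalue lies in $[\tfrac14-\epsilon_k,\tfrac14]$.

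Then I would assemble the sequence by interleaving the thresholds. For $1\leq i<N_1$, let $X_i$ be any connected degree $i$ cover of $X$ (such a cover exists because $\pi_1(X)$ is free of rank $2$ and hence has a subgroup of index $i$ for every $i$). For $i\geq N_1$, let $k(i)$ be the largest $k$ with $N_k\leq i$, and let $X_i$ be a connected degree $i$ cover of $X$ with $\spec(\Delta_{X_i})\cap[0,\tfrac14-\epsilon_{k(i)})=\{0\}$, which exists by the previous step. Then $\chi(X_i)=-i$ for every $i$, and $k(i)\to\infty$ as $i\to\infty$, so the bottom non-zero eigenvalue of $X_i$ is squeezed between $\tfrac14-\epsilon_{k(i)}$ and $\tfrac14$ and therefore tends to $\tfrac14$, which is the claim.

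I do not anticipate a genuine obstacle here: all of the analytic substance is already contained in Theorem~\ref{thm:main-theorem} and the Otal--Rosas theorem. The only point needing a little care is bookkeeping --- Theorem~\ref{thm:main-theorem} is an asymptotic statement for each fixed $\epsilon$, so producing a single sequence that realizes \emph{every} Euler characteristic $-i$ requires the interleaving of the thresholds $N_k$ above rather than one application of the theorem for a fixed $\epsilon$.
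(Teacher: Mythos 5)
Your proposal is correct and follows exactly the paper's route: take a base surface with $\chi(X)=-1$ (once-punctured torus or thrice-punctured sphere), invoke Otal--Rosas to identify $\spec(\Delta_X)\cap[0,\tfrac14)=\{0\}$ with multiplicity one, and apply Theorem \ref{thm:main-theorem}; your interleaving of thresholds and the treatment of small degrees is just the routine bookkeeping the paper leaves implicit.
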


The corresponding question for closed surfaces $X$ is as follows.
If $X$ is closed then the spectrum of $\Delta_{X}$ consists of eigenvalues
\[
0=\lambda_{0}(X)\leq\lambda_{1}(X)\leq\cdots\leq\lambda_{i}(X)\leq\cdots
\]
with $\lambda_{i}(X)\to\infty$ as $i\to\infty$. Suppose that $X_{i}$
are a sequence of closed hyperbolic surfaces with genera $g(X_{i})\to\infty$.
Huber proved in \cite{Huber} that in this case,
\[
\lim\sup\lambda_{1}(X_{i})\leq\frac{1}{4}.
\]
Therefore $\frac{1}{4}$ is an asymptotically optimal lower bound
for $\lambda_{1}(X_{i})$. In the same scenario, it was conjectured
in \cite{Buser} that $\lim_{i\to\infty}\lambda_{1}(X_{i})\to0$.
This was corrected in \cite{Buser2} where it was put forward that
`probably' there exist a sequence of $X_{i}$ as above with 
\[
\lim_{i\to\infty}\lambda_{1}(X_{i})=\frac{1}{4}.
\]
See \cite[Conj. 5]{wu2018small}, \cite[Problem 10.3]{Wright} for
more recent iterations of the question of whether $\lambda_{1}(X_{i})$
can go to $\frac{1}{4}$ as the genus tends to infinity. 

We are able to resolve this question here by combining Corollary \ref{cor:There-exist-finite-area}
with the `handle lemma' of Buser, Burger, and Dodziuk \cite{BBD};
see Brooks and Makover \cite[Lemma 1.1]{BrooksMakover1} for the extraction
of the lemma. We obtain
\begin{cor}
\label{cor:there-exist-compact}~There exist closed hyperbolic surfaces
$X_{i}$ with genera\linebreak{}
$g(X_{i})\to\infty$ and $\lambda_{1}(X_{i})\to\frac{1}{4}$. In fact
one can take $g(X_{i})=i$.
\end{cor}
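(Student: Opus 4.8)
The plan is to deduce Corollary~\ref{cor:there-exist-compact} from Corollary~\ref{cor:There-exist-finite-area} by compactifying the cusped surfaces produced there, using the ``handle'' construction of Buser, Burger, and Dodziuk \cite{BBD} in the form extracted as \cite[Lemma 1.1]{BrooksMakover1}. The point is that this construction removes the cusps without creating spectrum below $\tfrac14-o(1)$, and --- if the cusps are glued in pairs --- without changing the Euler characteristic, which is what lets us pin down the genus exactly.

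First I would fix the input surfaces. By Corollary~\ref{cor:There-exist-finite-area}, for each $i\geq 2$ there is a connected finite-area non-compact hyperbolic surface $Y_i$ with $\chi(Y_i)=2-2i$ and with $\inf\big(\spec(\Delta_{Y_i})\cap(0,\infty)\big)\to\tfrac14$ as $i\to\infty$; in particular, for every $\delta>0$ and all large $i$, the surface $Y_i$ has no $L^2$-eigenvalue in $(0,\tfrac14-\delta)$. Writing $g$ for the genus of $Y_i$ and $c_i$ for its number of cusps, the relation $\chi(Y_i)=2-2g-c_i=2-2i$ forces $c_i=2(i-g)$; in particular $c_i$ is even and positive.

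Next I would compactify. Truncate each cusp of $Y_i$ along a horocycle of length $\ell_i$, where $\ell_i\to0$ will be chosen below, and glue the resulting $c_i$ horocyclic boundary circles in pairs, replacing each pair by the compact hyperbolic piece of \cite{BBD} interpolating between two truncated cuspidal ends. Since $Y_i$ is connected, this inserts $c_i/2$ tubes inside a single connected surface, each one adding a handle, so the result $X_i$ is a closed connected hyperbolic surface with $\chi(X_i)=\chi(Y_i)=2-2i$, that is, $g(X_i)=i$. The spectral content of \cite[Lemma 1.1]{BrooksMakover1} is that the small eigenvalues of $X_i$ are controlled by those of $Y_i$ together with the geometry of the glued-in handles: cutting $X_i$ along the gluing curves and applying Dirichlet--Neumann bracketing, an eigenfunction of $\Delta_{X_i}$ with eigenvalue $<\tfrac14-\delta$ would produce either a test function witnessing spectrum of $Y_i$ below $\tfrac14-\delta$ (impossible for $i$ large, by the previous paragraph) or a low Dirichlet mode supported on a handle region. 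Choosing $\ell_i\to0$ rules out the second possibility: a handle region is then a long thin ``double cusp'' whose waist has length $\asymp 2\log(1/\ell_i)$, and whose bottom Dirichlet eigenvalue tends to $\tfrac14$, since the direction-invariant mode sees, up to a vanishing error from the bounded transition region, the one-dimensional operator $-\partial_s^{2}+\tfrac14$ on an interval of that length. Hence $\lambda_1(X_i)\geq\tfrac14-o(1)$, so $\liminf_{i\to\infty}\lambda_1(X_i)\geq\tfrac14$; combined with Huber's bound $\limsup_{i\to\infty}\lambda_1(X_i)\leq\tfrac14$ \cite{Huber}, valid because $g(X_i)=i\to\infty$, this yields $\lambda_1(X_i)\to\tfrac14$.

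The main obstacle is the quantitative side of the handle lemma: one must be sure the compactification creates no spectrum below $\tfrac14-o(1)$, which is exactly why the glued-in handles have to be taken long and thin and why the estimate on the transition region has to be made uniform in $i$; this is where the analysis of \cite{BBD} enters. The remaining issue is the Euler-characteristic bookkeeping --- feeding the surface with $\chi=2-2i$, rather than the surface with $\chi=-i$, of Corollary~\ref{cor:There-exist-finite-area} into the pairing construction --- which is what upgrades ``genus tending to infinity along some sequence'' to ``genus equal to $i$''.
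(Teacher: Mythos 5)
Your proposal is correct and follows essentially the same route as the paper: feed the cusped surfaces of Corollary \ref{cor:There-exist-finite-area} with even Euler characteristic (hence an even number of cusps) into the Buser--Burger--Dodziuk handle construction as extracted in \cite[Lemma 1.1]{BrooksMakover1}, note that pairwise gluing preserves the Euler characteristic so the genus is pinned down exactly, and conclude with Huber's upper bound. The only differences are cosmetic: you index by $\chi=2-2i$ rather than restricting to even values of $\chi=-i$, and you sketch the internal mechanism of the handle lemma (horocyclic truncation, long thin handles, bracketing), which the paper simply cites.
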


\subsection{Prior work}

The analog of Theorem \ref{thm:main-theorem} was proved for conformally
compact infinite area hyperbolic surfaces by the second named author
and Naud in \cite{MageeNaud2}, following a previous work with an
intermediate result \cite{MN1}. The reader should see however \cite{MageeNaud2}
for details because the spectral theory is more subtle for infinite
area surfaces and moreover the results in \cite{MageeNaud2} go beyond
statements about $L^{2}$ eigenvalues.

The very first uniform spectral gap for certain combinatorial models
of random surfaces (the `Brooks---Makover' models), yielding both
finite-area non-compact and closed surfaces, appears in work of Brooks
and Makover \cite{BrooksMakover}. This spectral gap is non-explicit.

The analog of Theorem \ref{thm:main-theorem} for closed hyperbolic
surfaces is still unknown. The best known result in the random cover
model is due to the second named author, Naud, and Puder \cite{MageeNaudPuder},
building on \cite{MPasympcover}, that gives an a.a.s. relative\footnote{Here relative refers to disregarding eigenvalues of the base surface.}
spectral gap of size $\frac{3}{16}-\epsilon$ for random covers of
a closed hyperbolic surface. The fact that a Weil---Petersson random
closed hyperbolic surface enjoys spectral gap of size $\frac{3}{16}-\epsilon$
with probability tending to one as genus $\to\infty$ was proved independently
by Wu and Xue \cite{wu2021random} and Lipnowski and Wright \cite{lipnowski2021optimal}.
The first uniform spectral gap of size $\approx0.0024$ for Weil---Petersson
random closed surfaces was proved by Mirzakhani in \cite{MirzakhaniRandom}.
Prior to the current work, the first named author proved in \cite{HideWP}
that Weil---Petersson random surfaces of genus $g$ with $O(g^{\alpha})$
cusps, with $\alpha<\frac{1}{2}$, have an explicit positive uniform
spectral gap depending on $\alpha$ as $g\to\infty$, which coincides
(for any fixed $\epsilon>0$) with $\frac{3}{16}-\epsilon$ if $\alpha=0$.

The fact that all models of random closed hyperbolic surfaces are
currently stuck at $\lambda_{1}=\frac{3}{16}-\epsilon$ is an obstacle
to proving Corollary \ref{cor:there-exist-compact} by the more direct
method of proving that random closed surfaces of large genus have
almost optimal spectral gaps. This $\frac{3}{16}$ barrier bears explanation;
it also appears in the famous $\frac{3}{16}$ Theorem of Selberg \cite{SelbergFourier}.
Very roughly speaking, this barrier corresponds to fine control of
probability events concerning \emph{simple} closed geodesics in random
surfaces and not having as fine control on non-simple geodesics. This
splitting of geodesics into simple and non-simple goes back to work
of Broder and Shamir \cite{BroderShamir} in the graph theoretic setting.
The appearance of $\frac{3}{16}$ in \cite{SelbergFourier} is for
different reasons relating to Selberg's use of Weil's bounds \cite{Weil}
on Kloosterman sums.

The previous best results towards Corollary \ref{cor:There-exist-finite-area},
just for the case of $\chi(X_{i})\to-\infty$, all came from arithmetic
hyperbolic surfaces. The records were held by Selberg \cite{SelbergFourier}
(spectral gaps of size $\frac{3}{16}=0.1875$), Gelbart and Jacquet
\cite{GelbartJacquet} (existence of spectral gaps larger than $\frac{3}{16}$),
Luo, Rudnick, and Sarnak \cite{LRS} (spectral gaps of size $\frac{171}{784}\approx0.218$),
Kim and Shahidi \cite{KimShahidi,KimShahidi2} (spectral gaps of size
$\frac{77}{324}\approx0.23765$), and currently, Kim and Sarnak \cite[Appendix 2]{KIM}
(spectral gaps of size $\frac{975}{4096}\approx0.23804$).

The history of Corollary \ref{cor:there-exist-compact} began with
McKean's paper \cite{McKean,McKeanCorrection} where it was wrongly
asserted that the first eigenvalue of any compact hyperbolic surface
is at least $\frac{1}{4}$; this was disproved by Randol in \cite{Randol}
who actually showed that there can be arbitrarily many eigenvalues
in $(0,\frac{1}{4})$. Buser proved in \cite{Buser2} using Selberg's
$\frac{3}{16}$ theorem and the Jacquet---Langlands machinery \cite{JacquetLanglands}
that there exist closed hyperbolic surfaces $X_{i}$ with genera $g(X_{i})\to\infty$
and $\lambda_{1}(X_{i})\geq\frac{3}{16}$. Buser, Burger, and Dodziuk
proved in \cite{BBD} that one can get the slightly weaker result
$\lambda_{1}(X_{i})\to\frac{3}{16}$ but without using Jacquet---Langlands.
This turns out to be instrumental in the current work; see Lemma \ref{lem:handle}
below. 

Following these works, all progress towards Corollary \ref{cor:there-exist-compact}
ran parallel to the previously discussed progress to Corollary \ref{cor:There-exist-finite-area}.
In particular, the previous best known result towards Corollary \ref{cor:there-exist-compact}
comes from combining the Kim-Sarnak bound with either Jacquet---Langlands
or \cite{BBD} to obtain the first part of Corollary \ref{cor:there-exist-compact}
with $\frac{1}{4}$ replaced by $\frac{975}{4096}$.

\subsection{Overview of proof}

The proof of Theorem \ref{thm:main-theorem} follows the following
strategy. We view 
\[
X=\Gamma\backslash\H
\]
where $\Gamma$ is a discrete torsion free subgroup of $\PSL_{2}(\R)$
and $\H$ is the hyperbolic upper half plane. We explain in $\S$\ref{sec:Random-covers}
that the random degree $n$ covers of $X$ are parameterized by 
\[
\phi\in\Hom(\Gamma,S_{n})\mapsto X_{\phi}
\]
where $\phi$ is a uniformly random homomorphism of the free group
$\Gamma$ into the symmetric group $S_{n}$. Because we disregard
eigenvalues and eigenfunctions lifted from $X$, we restrict our attention
to the space $L_{\new}^{2}(X_{\phi})$ of functions that are orthogonal
to all lifts of $L^{2}$ functions from $X$. The strategy is then,
for any $s_{0}>\frac{1}{2}$, to asymptotically almost surely produce
a bounded resolvent operator
\[
R_{X_{\phi}}(s):L_{\new}^{2}(X_{\phi})\to H_{\new}^{2}(X_{\phi})
\]
where $H_{\new}^{2}(X_{\phi})\subset L_{\new}^{2}(X_{\phi})$ is a
suitable Sobolev space (see $\S$\ref{subsec:Function-spaces}) such
that for any $s\in[s_{0},1]$ the identity
\[
\left(\Delta_{X_{\phi}}-s(1-s)\right)R_{X_{\phi}}(s)=\mathrm{\mathrm{Id}_{L_{\new}^{2}(X_{\phi})}},
\]
makes sense and holds true; this will forbid new eigenvalues in $[0,s_{0}(1-s_{0})]$.

The way we build our resolvent is by a parametrix construction. The
fundamental structure of parametrix construction is to produce a bounded
operator
\[
\M_{\phi}(s):L_{\new}^{2}(X_{\phi})\to H_{\new}^{2}(X_{\phi})
\]
 such that 
\[
\left(\Delta_{X_{\phi}}-s(1-s)\right)\M_{\phi}(s)=\mathrm{\mathrm{Id}_{L_{\new}^{2}(X_{\phi})}}+\LL_{\phi}(s)
\]
where we aim to prove $\LL_{\phi}(s)$ has norm less than one as a
bounded operator on $L_{\new}^{2}(X_{\phi})$ so that we can obtain
our bounded resolvent
\[
R_{X_{\phi}}(s)=\M_{\phi}(s)\left(\mathrm{\mathrm{Id}_{L_{\new}^{2}(X_{\phi})}}+\LL_{\phi}(s)\right)^{-1}.
\]

Often in a parametrix construction one also wants the $\LL$ term
to be compact: this will also be the case here and turns out to be
essential for the application of random operator results.

The way we build $\M_{\phi}(s)$ is by patching together a `cuspidal
parametrix' $\M_{\phi}^{\cusp}(s)$ based on a model resolvent in
the cusps and an an interior parametrix $\M_{\phi}^{\i}(s)$ that
only depends on the localization of its argument to a compact part
of $X_{\phi}$. We then let 
\[
\M_{\phi}(s)=\M_{\phi}^{\i}(s)+\M_{\phi}^{\cusp}(s)
\]
and we get a resulting splitting
\[
\LL_{\phi}(s)=\LL_{\phi}^{\i}(s)+\LL_{\phi}^{\cusp}(s).
\]
In $\S$\ref{sec:Cusp-parametrix} we show that for any $\phi$, the
term $\M_{\phi}^{\cusp}(s)$ can be designed so that $\|\LL_{\phi}^{\cusp}(s)\|\leq\frac{1}{5}$
(or any small number), so that it will not essentially interfere with
our plans of obtaining $\|\LL_{\phi}(s)\|<1$. The ability to do this
is a particular feature of the geometry of hyperbolic cusps.

The term $\M_{\phi}^{\i}(s)$ is based on averaging the resolvent
kernel of the hyperbolic plane over the fundamental group of $\Gamma$
(suitably twisting by $\phi$) to obtain an integral operator on $L_{\new}^{2}(X_{\phi})$.
The problem with this is that the averaging will not obviously converge,
so we have to multiply the hyperbolic resolvent kernel by a radial
cutoff that localizes to radii $\leq T+1$ to get a priori convergence
for all $s\in(\frac{1}{2},1]$. This gives us that $\M_{\phi}^{\i}(s)$
is bounded (Lemma \ref{lem:parametrix-bounded}).

The effect of this cutoff is that the error term $\LL_{\phi}^{\i}(s)$
is an integral operator with smooth kernel. We prove that we can unitarily
conjugate $\LL_{\phi}^{\i}(s)$ to
\[
\sum_{\gamma\in\Gamma}a_{\gamma}(s)\otimes\phi(\gamma)
\]
acting on $L^{2}(F)\otimes V_{n}^{0}$, where $F$ is a Dirichlet
fundamental domain for $\Gamma$ and $V_{n}^{0}$ is the standard
$n-1$ dimensional irreducible representation of $S_{n}$. The $a_{\gamma}(s)$
are compact operators on $L^{2}(F)$ and there are only finitely many
$\gamma\in\Gamma$ for which $a_{\gamma}(s)$ is non-zero. 

If instead, the $a_{\gamma}(s)$ were elements of $\End(\C^{r})$
for some fixed finite $r$, because $\Gamma$ is free we would now
be exactly in the situation to apply the breakthough results of Bordenave
and Collins from \cite{BordenaveCollins}. These random operator results,
combined with a linearization trick of Pisier \cite{Pisierlinearization},
would tell us that for any $\epsilon>0$, a.a.s. 
\begin{equation}
\|\LL_{\phi}^{\i}(s)\|\leq\left\Vert \sum_{\gamma\in\Gamma}a_{\gamma}(s)\otimes\rho_{\infty}(\gamma)\right\Vert +\epsilon\label{eq:infinity-operator}
\end{equation}
where $\rho_{\infty}:\Gamma\to\End(\ell^{2}(\Gamma))$ is the right
regular representation. Because the $a_{\gamma}(s)$ are in reality
compact operators on Hilbert spaces we can approximate by finite rank
operators to the same effect.

The key point (which is a sticking point in other approaches to this
problem using e.g. transfer operators) is that we understand the operator
in the right hand side of (\ref{eq:infinity-operator}) well: it can
be unitarily conjugated to an operator on $L^{2}(\H)$ that is the
composition of multiplication with a cutoff (with norm $\leq1$) and
an integral operator with real-valued radial kernel that localizes
to radii in $[T,T+1]$.

The key is that as this latter operator is self-adjoint we can use
the theory of the Selberg transform to estimate its norm in Lemma
\ref{lem:LH-noorm-bound}. By choosing $T$ sufficiently large in
the beginning, we can force the norm in the right hand side of (\ref{eq:infinity-operator})
to be as small as we like, given any $s_{0}>\frac{1}{2}$, for all
$s\geq s_{0}$.

Assembling these arguments, for any $s\geq s_{0}>\frac{1}{2}$, a.a.s.
$1+\LL_{\phi}(s)$ can be inverted which rules out $X_{\phi}$ having
a new eigenvalue at $s(1-s)$. To be able to get this result for all
$s\geq s_{0}$ with probability tending to one, we make sure that
$\|\LL_{\phi}(s)\|\leq\frac{3}{5}$ at a fine enough net of $s\in[s_{0},1]$
and then use a deviations estimate (Lemma \ref{lem:deviations}) to
show (deterministically) that $\|\LL_{\phi}(s)\|$ fluctuates at most
by $\frac{1}{5}$ from point to point.

\emph{A historical remark:} a similar parametrix method in the context
of hyperbolic surfaces (albeit for completely different purposes)
goes back to work of Guillopé and Zworski \cite{GZ1} who used the
method to give sharp upper bounds on the number of resonances of geometrically
finite hyperbolic surfaces in balls. In turn this method is based
on Vodev's `impressive refinement of the Fredholm determinant method'
\emph{(ibid.)} \cite{Vodev1,Vodev2,Vodev3} for the control of scattering
poles of perturbed Laplacians in Euclidean spaces.

\subsection*{Acknowledgments}

We thank A. Gamburd, C. Bordenave, B. Collins, F. Naud, D. Puder,
and P. Sarnak for conversations about this work. This project has
received funding from the European Research Council (ERC) under the
European Union\textquoteright s Horizon 2020 research and innovation
programme (grant agreement No 949143).

\section{Random covers\label{sec:Random-covers}}

\subsection{Construction\label{subsec:Construction}}

Let $X$ be a finite area non-compact connected hyperbolic surface.
We view $X$ as 
\[
X=\Gamma\backslash\H
\]
 where 
\[
\H=\{\,x+iy\,:\,x,y\in\R,\,y>0\,\}
\]
with metric
\[
\frac{dx^{2}+dy^{2}}{y^{2}}
\]
 where $\Gamma$ is a discrete torsion-free subgroup of $\PSL_{2}(\R)$
that acts, via Möbius transformations, by orientation preserving isometries
on $\H$.

For $n\in\N$ let $[n]\eqdf\{1,\ldots,n\}$ and $S_{n}$ denote the
group of permutations of $[n]$. For any homomorphism $\phi\in\Hom(\Gamma,S_{n})$
we construct a hyperbolic surface as follows. Let $\Gamma$ act on
$\H\times[n]$ by 
\[
\gamma(z,x)\eqdf(\gamma z,\phi(\gamma)[x])
\]
and let 
\[
X_{\phi}\eqdf\Gamma\backslash_{\phi}\left(\H\times[n]\right)
\]
denote the quotient by this action. If we choose $\phi$ uniformly
at random in $\Hom(\Gamma,S_{n})$, the resulting $X_{\phi}$ is a
uniformly random degree $n$ covering space of $X$\@. Note that
$\Gamma$ is a free group freely generated by some
\[
\gamma_{1},\ldots,\gamma_{d}\in\Gamma
\]
 and choosing $\phi$ is the same as choosing 
\[
\sigma_{i}\eqdf\phi(\gamma_{i}),\quad i=1,\ldots,d
\]
independently and uniformly at random in $S_{n}$.

The surface $X_{\phi}$ is connected if and only if $\Gamma$ acts
transitively on $[n]$ via $\phi$. By a theorem of Dixon \cite{Dixon},
two independent and uniformly random permutations in $S_{n}$ generate
$S_{n}$ or $A_{n}$ a.a.s and it follows that a uniformly random
cover $X_{\phi}$ is connected a.a.s.

Let $V_{n}\eqdf\ell^{2}([n])$ and $V_{n}^{0}\subset V_{n}$ the subspace
of functions on $[n]$ with zero mean. The representation of $S_{n}$
on $\ell^{2}([n])$ is its standard representation by 0-1 matrices
and the subspace $V_{n}^{0}$ is an irreducible subspace of dimension
$(n-1)$: we write 
\[
\rho_{\phi}:\Gamma\to\End(V_{n}^{0})
\]
 for the random representation of $\Gamma$ induced by the random
$\phi$. 

\subsection{Function spaces\label{subsec:Function-spaces}}

We define 
\[
L_{\new}^{2}(X_{\phi})
\]
 to be the space of $L^{2}$ functions on $X_{\phi}$ that are orthogonal
to all pullbacks of $L^{2}$ functions from $X$. The elements $f\in L_{\new}^{2}(X_{\phi})$
have mean value 0 fiber-wise in the sense that for almost every $x\in X$
we have $\sum_{i=1}^{n}f(x_{i})=0$ where $x_{i}$ are the lifts of
$x$ to $X_{\phi}$.

We have 
\[
L^{2}(X_{\phi})\cong L_{\new}^{2}(X_{\phi})\oplus L^{2}(X).
\]
This induces a multiplicity respecting inclusion $\spec(\Delta_{X})\subset\spec(\Delta_{X_{\phi}})$.
All other eigenvalues of $\Delta_{X_{\phi}}$(with multiplicities)
arise from eigenfunctions in the subspace $L_{\new}^{2}(X_{\phi})$.

Let $F$ denote a Dirichlet fundamental domain for $X$, that is,
for some fixed $o\in\H$, 
\[
F\eqdf\bigcap_{\gamma\in\Gamma\backslash\{\mathrm{id}\}}\{\,z\in\H\,:\,d(o,z)<d(z,\gamma o)\,\}.
\]
This choice will be convenient for the proof of Lemma \ref{lem:geometric}.
Let $C^{\infty}(\H;V_{n}^{0})$ denote the smooth $V_{n}^{0}$-valued
functions on $\H$. There is an isometric linear isomorphism between
\[
C^{\infty}(X_{\phi})\cap L_{\new}^{2}(X_{\phi})
\]
and the space of smooth $V_{n}^{0}$-valued functions on $\H$ satisfying
\begin{equation}
f(\gamma z)=\rho_{\phi}(\gamma)f(z)\label{eq:automorphy}
\end{equation}
 for all $\gamma\in\Gamma$, with finite norm
\[
\|f\|_{L^{2}(F)}^{2}\eqdf\int_{F}\|f(z)\|_{V_{n}^{0}}^{2}d\H<\infty.
\]
We will denote $C_{\phi}^{\infty}(\H;V_{n}^{0})\subset C^{\infty}(\H;V_{n}^{0})$
for these functions. Under this isomorphism, the Laplacian on $C^{\infty}(X_{\phi})\cap L_{\new}^{2}(X_{\phi})$
is intertwined with the Laplacian that acts on $C_{\phi}^{\infty}(\H;V_{n}^{0})$
in the obvious way (this can be defined by choosing any basis of $V_{n}^{0}$
and letting the Laplacian act coordinatewise on $V_{n}^{0}$-valued
functions). The completion of $C_{\phi}^{\infty}(\H;V_{n}^{0})$ with
respect to $\|\bullet\|_{L^{2}(F)}$ is denoted by $L_{\phi}^{2}(\H;V_{n}^{0})$;
the isomorphism above extends to one between $L_{\new}^{2}(X_{\phi})$
and $L_{\phi}^{2}(\H;V_{n}^{0})$. 

Let $C_{c,\phi}^{\infty}(\H;V_{n}^{0})$ denote the elements of $C_{\phi}^{\infty}(\H;V_{n}^{0})$
that are compactly supported when restricted to $\bar{F}$ (in other
words, compactly supported modulo $\Gamma$).

We also consider the following Sobolev spaces. Let $H^{2}(\H)$ denote
the completion of $C_{c}^{\infty}(\H)$ with respect to the norm
\[
\|f\|_{H^{2}(\H)}^{2}\eqdf\|f\|_{L^{2}(\H)}^{2}+\|\Delta f\|_{L^{2}(\H)}^{2}.
\]
We let $H_{\phi}^{2}(\H;V_{n}^{0})$ denote the completion of $C_{c,\phi}^{\infty}(\H;V_{n}^{0})$
with respect to the norm
\[
\|f\|_{H_{\phi}^{2}(\H;V_{n}^{0})}^{2}\eqdf\|f\|_{L^{2}(F)}^{2}+\|\Delta f\|_{L^{2}(F)}^{2}.
\]
We let $H^{2}(X_{\phi})$ denote the completion of $C_{c}^{\infty}(X_{\phi})$
with respect to the norm 
\[
\|f\|_{H^{2}(X_{\phi})}^{2}\eqdf\|f\|_{L^{2}(X_{\phi})}^{2}+\|\Delta f\|_{L^{2}(X_{\phi})}^{2}.
\]
Viewing $H^{2}(X_{\phi})$ as a subspace of $L^{2}(X_{\phi})$ in
the obvious way, we let 
\[
H_{\new}^{2}(X_{\phi})\eqdf H^{2}(X_{\phi})\cap L_{\new}^{2}(X_{\phi}).
\]
Similarly to before there is an isometric isomorphism between $H_{\new}^{2}(X_{\phi})$
and $H_{\phi}^{2}(\H;V_{n}^{0})$ that intertwines the two relevant
Laplacian operators.

\section{Background random matrix theory}

In this section we give an account of the breakthrough result of Bordenave
and Collins \cite{BordenaveCollins} and its amplification through
a linearization trick that appears in Pisier \cite[Cor. 14]{Pisierlinearization}
(see also, historically, \cite{PisierOT,HaagerupThr}, and \cite[\S 6]{BordenaveCollins}).

Let $\std_{n}:S_{n}\to\End(V_{n}^{0})$ denote the linear action of
$S_{n}$ on $V_{n}^{0}$. We let $\rho_{\infty}:\Gamma\to\End(\ell^{2}(\Gamma))$
denote the right regular representation of $\Gamma$. We are going
to use the following direct consequence of \cite[Thm. 2]{BordenaveCollins}.
\begin{thm}[Bordenave---Collins]
\label{thm:Bordenave-Collins}Suppose that $r\in\N$ and $a_{0},a_{1},\ldots a_{d}\in\mathrm{Mat}_{r\times r}(\C)$.
Suppose $a_{0}^{*}=a_{0}$. Then for any $\epsilon>0$, with probability
tending to one as $n\to\infty$, we have 
\begin{align*}
 & \left\Vert a_{0}\otimes\mathrm{Id}_{V_{n}^{0}}+\sum_{i=1}^{d}a_{i}\otimes\std_{n}(\sigma_{i})+a_{i}^{*}\otimes\std_{n}(\sigma_{i}^{-1})\right\Vert _{\C^{r}\otimes V_{n}^{0}}\\
\leq & \left\Vert a_{0}\otimes\mathrm{Id}_{\ell^{2}(\Gamma)}+\sum_{i=1}^{d}a_{i}\otimes\rho_{\infty}(\gamma_{i})+a_{i}^{*}\otimes\rho_{\infty}(\gamma_{i}^{-1})\right\Vert _{\C^{r}\otimes\ell^{2}(\Gamma)}+\epsilon.
\end{align*}
The norm on the top is the operator norm on $\C^{r}\otimes V_{n}^{0}$.
The norm on the bottom is the operator norm on $\C^{r}\otimes\ell^{2}(\Gamma)$.
\end{thm}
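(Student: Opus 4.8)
The plan is to obtain the statement as the degree-one, matrix-coefficient, self-adjoint specialization of the strong-convergence theorem of Bordenave and Collins. Recall that \cite[Thm.~2]{BordenaveCollins} asserts that the $d$-tuple of random permutation matrices $(\std_n(\sigma_1),\dots,\std_n(\sigma_d))$ acting on $V_n^0$ converges \emph{strongly}, as $n\to\infty$, to the $d$-tuple $(\rho_\infty(\gamma_1),\dots,\rho_\infty(\gamma_d))$ of free Haar unitaries, i.e.\ the images of the free generators of $\Gamma$ under the right regular representation on $\ell^2(\Gamma)$. Unwinding the definition of strong convergence, this says precisely that for every self-adjoint noncommutative $*$-polynomial with coefficients in $\mathrm{Mat}_{r\times r}(\C)$ in the variables $\sigma_1,\sigma_1^{-1},\dots,\sigma_d,\sigma_d^{-1}$, the operator norm of its evaluation on $\C^r\otimes V_n^0$ converges in probability to the operator norm of the corresponding evaluation on $\C^r\otimes\ell^2(\Gamma)$. (If one prefers the form of \cite{BordenaveCollins} stated only for scalar-coefficient polynomials, the passage to $\mathrm{Mat}_{r\times r}(\C)$-coefficients is exactly the self-adjoint linearization trick of \cite[Cor.~14]{Pisierlinearization}, cf.\ \cite{HaagerupThr}; but the matrix-coefficient form is already available in \cite{BordenaveCollins}.)

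With this in hand I would first check that the two operators being compared in the statement,
\begin{align*}
M_n &\eqdf a_0\otimes\mathrm{Id}_{V_n^0}+\sum_{i=1}^{d}a_i\otimes\std_n(\sigma_i)+a_i^*\otimes\std_n(\sigma_i^{-1}),\\
M_\infty &\eqdf a_0\otimes\mathrm{Id}_{\ell^2(\Gamma)}+\sum_{i=1}^{d}a_i\otimes\rho_\infty(\gamma_i)+a_i^*\otimes\rho_\infty(\gamma_i^{-1})
\end{align*}
are self-adjoint. Indeed permutation matrices are real orthogonal, so $\std_n(\sigma_i)^*=\std_n(\sigma_i^{-1})$, and $\rho_\infty$ is a unitary representation, so $\rho_\infty(\gamma_i)^*=\rho_\infty(\gamma_i^{-1})$; combined with the hypothesis $a_0^*=a_0$ this gives $M_n^*=M_n$ and $M_\infty^*=M_\infty$ (no self-adjointness of $a_1,\dots,a_d$ is needed, as each summand $a_i\otimes\std_n(\sigma_i)+a_i^*\otimes\std_n(\sigma_i^{-1})$ is already self-adjoint). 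Consequently $\|M_n\|$ and $\|M_\infty\|$ are spectral-edge quantities, and ``convergence of operator norms'' is exactly what the cited theorem controls for this particular (degree-one) pencil; moreover $\|M_\infty\|$ coincides with the operator norm of $a_0\otimes 1+\sum_i a_i\otimes u_i+a_i^*\otimes u_i^*$ in $\mathrm{Mat}_{r\times r}(\C)\otimes C^*_{\mathrm r}(\Gamma)$ with $u_i$ the canonical free Haar unitaries, realized faithfully on $\ell^2(\Gamma)$ via $\rho_\infty$.

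Applying the cited theorem to this pencil gives $\|M_n\|\to\|M_\infty\|$ in probability, and in particular, for every $\epsilon>0$, the event $\{\|M_n\|\le\|M_\infty\|+\epsilon\}$ has probability tending to one as $n\to\infty$; this is exactly the asserted inequality (only the upper half of strong convergence is used). One remark then clarifies why one works on $V_n^0$ rather than on $\ell^2([n])$: the all-ones vector spans a $\Gamma$-invariant line on which every $\std_n(\sigma_i)$ acts as the identity, so over $\ell^2([n])=\C\mathbf{1}\oplus V_n^0$ the analogous operator would split as the \emph{deterministic} block $a_0+\sum_{i=1}^d(a_i+a_i^*)$ on $\C^r\otimes\C\mathbf{1}$ together with $M_n$; excising this ``trivial'' block is precisely the matrix-side counterpart of passing to $L^2_{\new}$. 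In short, at this level there is essentially no obstacle: all the depth is internal to \cite[Thm.~2]{BordenaveCollins}, and the only points to get right here are the self-adjointness of the pencil and the identification of the limiting object with the right regular representation of the free group $\Gamma$ on $\ell^2(\Gamma)$.
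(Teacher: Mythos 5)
Your proposal is correct and takes essentially the same route as the paper, which states this theorem without further argument as a direct consequence of Bordenave--Collins \cite[Thm. 2]{BordenaveCollins}; your verification of the self-adjointness of the pencil, the identification of the limit with the right regular representation of the free group $\Gamma$ on $\ell^{2}(\Gamma)$, and the observation that only the upper half of strong convergence is needed is precisely the implicit bookkeeping behind that citation. The only nuance is that the extension to general matrix-coefficient polynomials is not needed here (and the paper deliberately defers it to Pisier's linearization in Proposition \ref{prop:Pisier}): the degree-one self-adjoint pencil form of \cite[Thm. 2]{BordenaveCollins} already suffices, exactly as you use it.
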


\begin{rem}
Note in the above that $\std_{n}(\sigma_{i})=\rho_{\phi}(\gamma_{i})$.
\end{rem}

This will be combined with the following result of Pisier \cite[Cor. 14 and following remark]{Pisierlinearization}.
\begin{prop}[Pisier]
\label{prop:Pisier}The result of Theorem \ref{thm:Bordenave-Collins}
implies that for any $R,k\in\N$, any non-commutative polynomials
\[
P_{1},P_{2},\ldots,P_{k}
\]
in the variables $\sigma_{1},\ldots,\sigma_{d},\sigma_{1}^{-1},\cdots,\sigma_{d}^{-1}$
with complex coefficients, and any $a_{1},\ldots,a_{k}\in\mathrm{Mat}_{R\times R}(\C)$
we have the following. For any $\epsilon>0$, with probability tending
to one as $n\to\infty$, we have 
\begin{align}
 & \left\Vert \sum_{i=1}^{k}a_{i}\otimes\std_{n}(P_{i}(\sigma_{1},\ldots,\sigma_{d},\sigma_{1}^{-1},\ldots,\sigma_{d}^{-1}))\right\Vert _{\C^{R}\otimes V_{n}^{0}}\label{eq:operator-example}\\
\leq & \left\Vert \sum_{i=1}^{k}a_{i}\otimes\rho_{\infty}(P_{i}(\gamma_{1},\ldots,\gamma_{d},\gamma_{1}^{-1},\ldots,\gamma_{d}^{-1}))\right\Vert _{\C^{R}\otimes\ell^{2}(\Gamma)}+\epsilon.\nonumber 
\end{align}
\end{prop}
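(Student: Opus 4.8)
The plan is to deduce the Proposition from Theorem~\ref{thm:Bordenave-Collins} by the linearization argument of Pisier \cite[Cor.~14]{Pisierlinearization}; I sketch the mechanism and indicate where the substance lies. First one reduces to a single self-adjoint polynomial. Writing $u_{i}\eqdf\std_{n}(\sigma_{i})$, which is unitary with $\std_{n}(\sigma_{i}^{-1})=u_{i}^{*}$, the operator $T_{n}\eqdf\sum_{i=1}^{k}a_{i}\otimes\std_{n}(P_{i}(\sigma,\sigma^{-1}))$ is $Q(u_{1},\dots,u_{d})$ for the fixed noncommutative $*$-polynomial $Q\eqdf\sum_{i}a_{i}\otimes P_{i}$ with coefficients in $\mathrm{Mat}_{R\times R}(\C)$. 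Passing to the Hermitian dilation $H_{n}$ --- the self-adjoint operator obtained by doubling the matrix size and placing $T_{n}$ and $T_{n}^{*}$ in the off-diagonal blocks --- one has $\|H_{n}\|=\|T_{n}\|$, the spectrum of $H_{n}$ is symmetric, and $H_{n}=\widetilde{Q}(u_{1},\dots,u_{d})$ for a fixed \emph{self-adjoint} matrix-coefficient $*$-polynomial $\widetilde{Q}$. Setting $H_{\infty}\eqdf\widetilde{Q}(\rho_{\infty}(\gamma_{1}),\dots,\rho_{\infty}(\gamma_{d}))$, whose norm is the right-hand side of \eqref{eq:operator-example}, it suffices to show that $\|H_{n}\|\le\|H_{\infty}\|+\epsilon$ a.a.s.

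Next I invoke the linearization trick: there exist $N\in\N$, a self-adjoint $b_{0}\in\mathrm{Mat}_{N\times N}(\C)$, matrices $b_{1},\dots,b_{d}\in\mathrm{Mat}_{N\times N}(\C)$, and a self-adjoint $\Lambda\ge 0$ in $\mathrm{Mat}_{N\times N}(\C)$ such that, for every $d$-tuple of unitaries $v_{i}$ in a unital $C^{*}$-algebra and every $z\in\C$, the operator $z(\Lambda\otimes 1)-L(v)$ --- with $L(v)\eqdf b_{0}\otimes 1+\sum_{i}(b_{i}\otimes v_{i}+b_{i}^{*}\otimes v_{i}^{*})$ --- is invertible if and only if $z\notin\spec(\widetilde{Q}(v))$. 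Since $\Lambda\ge 0$, the eigenvalue branches of the self-adjoint family $z\mapsto z(\Lambda\otimes 1)-L(v)$ are non-decreasing, and one deduces from the invertibility criterion that $\|\widetilde{Q}(v)\|$ (which equals $\sup\spec(\widetilde{Q}(v))$, the spectrum being symmetric) is the largest $z$ at which the lowest branch $z\mapsto\lambda_{\min}(z(\Lambda\otimes 1)-L(v))$ vanishes, so this branch is strictly positive for $z>\|\widetilde{Q}(v)\|$. Fix a deterministic constant $c$ larger than $\|z(\Lambda\otimes1)-L(v)\|$ over all unitary $v$ and all $z$ in the relevant compact range; then $\lambda_{\min}(z(\Lambda\otimes 1)-L(v))=c-\|c\cdot 1-z(\Lambda\otimes 1)+L(v)\|$, so the size of this bottom eigenvalue is read off from the norm of the self-adjoint linear pencil $c\cdot 1-z(\Lambda\otimes 1)+L(\cdot)$, whose coefficient of the identity, $c\,\mathrm{Id}_{N}-z\Lambda+b_{0}$, is self-adjoint and which carries $b_{i},b_{i}^{*}$ on $v_{i},v_{i}^{*}$ --- exactly the shape handled by Theorem~\ref{thm:Bordenave-Collins}.

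Now for the transfer. Put $\rho\eqdf\|H_{\infty}\|$ and, for $z>\rho$, $\delta(z)\eqdf\lambda_{\min}(z(\Lambda\otimes 1)-L(\rho_{\infty}(\gamma)))>0$; set $\delta_{0}\eqdf\delta(\rho+\tfrac{\epsilon}{2})>0$. Since the $u_{i}$ are unitary, $\|H_{n}\|\le C_{0}$ for a deterministic constant $C_{0}$, so it is enough to rule out a.a.s. that some $z\in[\rho+\epsilon,C_{0}]$ has $\lambda_{\min}(z(\Lambda\otimes 1)-L(\std_{n}(\sigma)))\le 0$. For fixed such $z$, Theorem~\ref{thm:Bordenave-Collins} applied to the pencil $c\cdot 1-z(\Lambda\otimes 1)+L(\cdot)$ gives, a.a.s., $\|c\cdot 1-z(\Lambda\otimes\mathrm{Id}_{V_{n}^{0}})+L(\std_{n}(\sigma))\|\le\|c\cdot 1-z(\Lambda\otimes\mathrm{Id}_{\ell^{2}(\Gamma)})+L(\rho_{\infty}(\gamma))\|+\tfrac{\delta_{0}}{2}=c-\delta(z)+\tfrac{\delta_{0}}{2}$, whence $\lambda_{\min}(z(\Lambda\otimes 1)-L(\std_{n}(\sigma)))\ge\delta(z)-\tfrac{\delta_{0}}{2}\ge\tfrac{\delta_{0}}{2}>0$. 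All norms involved are Lipschitz in $z$ on $[\rho+\epsilon,C_{0}]$, so a finite $z$-net makes the conclusion simultaneous; combined with the description of $\|\widetilde{Q}(\std_{n}(\sigma))\|$ through its bottom branch this yields $\|H_{n}\|\le\rho+\epsilon$ a.a.s., as desired.

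The step I expect to be the genuine obstacle --- and the one I would cite rather than reprove --- is precisely what the linearization trick provides: converting the \emph{invertibility}/spectral-gap statement ``$z(\Lambda\otimes 1)-L(\std_{n}(\sigma))$ is non-singular for all $z>\rho+\epsilon$'' into the \emph{operator-norm} inequalities that Theorem~\ref{thm:Bordenave-Collins} actually delivers. The structural input that makes this work is the monotonicity of the affine pencil $z\mapsto z(\Lambda\otimes 1)-L(v)$ in the spectral parameter, letting its lowest eigenvalue be recovered from the norm of a constant shift; this is the content of \cite[Cor.~14]{Pisierlinearization} (building on \cite{HaagerupThr}). The remaining ingredients --- the Hermitian dilation, the deterministic bound $\|H_{n}\|\le C_{0}$, and the $z$-net with an accompanying deviation estimate --- are routine.
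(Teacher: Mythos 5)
The paper does not actually prove this Proposition: it is quoted verbatim as Pisier's result, with the proof deferred entirely to \cite[Cor.~14 and following remark]{Pisierlinearization}. So your final decision to ``cite rather than reprove'' the linearization step is, in effect, exactly what the paper does --- but then note that Corollary~14 of Pisier \emph{is} the statement of the Proposition itself (linear matrix-coefficient strong convergence implies polynomial matrix-coefficient strong convergence), so once it is invoked the dilation/pencil/net apparatus you build around it is redundant. The problem is that you attribute to Corollary~14 a different, weaker-looking statement (positivity/monotonicity of an affine pencil), and the mechanism you sketch in its place does not work.

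Concretely, the gap is the claim that there is a linearization $z(\Lambda\otimes 1)-L(v)$ with $\Lambda\geq 0$ whose \emph{lowest eigenvalue is strictly positive} for all $z>\|\widetilde{Q}(v)\|$; this is what makes your $\delta_0=\delta(\rho+\tfrac{\epsilon}{2})>0$ and hence the whole transfer via Theorem~\ref{thm:Bordenave-Collins}. For the standard (Anderson/Haagerup--Thorbj{\o}rnsen) linearizations one has $\Lambda=e_{11}$, a projection with large kernel, and the pencil has block form $\begin{pmatrix}0 & u\\ u^{*} & Q\end{pmatrix}$ with $Q$ invertible but indefinite; for $z$ above the spectrum the pencil is \emph{invertible} but not positive, since eigenvalue branches supported near the $Q$-block stay negative for all $z$ (monotonicity only prevents branches from crossing zero, it does not force $\lambda_{\min}>0$). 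Worse, a monotone pencil that is positive exactly when $z\geq\lambda_{\max}(\widetilde{Q}(v))$ for \emph{all} unitary tuples $v$ is an LMI representation of $\widetilde{Q}$, and by Helton--McCullough such representations exist only for matrix-convex polynomials --- so the asserted $\Lambda\geq0$ pencil cannot exist for general $P_{i}$, and $\delta(z)$ can be negative above $\rho$. This is precisely the subtlety that forces Pisier's actual proof (ultraproduct plus unital completely positive map / multiplicative-domain arguments, cf.\ also \cite{HaagerupThr} and \cite[\S 6]{BordenaveCollins}) rather than an eigenvalue-branch argument; if you want a self-contained proof you must reproduce an argument of that type, and if you are content to cite \cite[Cor.~14]{Pisierlinearization} then the Proposition follows in one line, as in the paper.
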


This type of `linearization' is obviously extremely powerful and has
played a role in many of the major breakthroughs in random operators
in recent years \cite{HaagerupThr,Collins2014,BordenaveCollins}.
Notice that one way to obtain an operator as in (\ref{eq:operator-example})
is as 
\[
\sum_{\gamma\in\Gamma}a_{\gamma}\otimes\rho_{\phi}(\gamma)
\]
where all $a_{\gamma}\in\mathrm{Mat}_{R\times R}(\C)$ and there are
only finitely many non-zero $a_{\gamma}$. Therefore we obtain combining
Theorem \ref{thm:Bordenave-Collins} and Proposition \ref{prop:Pisier}
the following:
\begin{cor}
\label{cor:bc}For any $r\in\N$ and any finitely supported map $\gamma\in\Gamma\mapsto a_{\gamma}\in\mathrm{Mat}_{r\times r}(\C)$,
for any $\epsilon>0$, with probability tending to one as $n\to\infty$
we have 
\[
\left\Vert \sum_{\gamma\in\Gamma}a_{\gamma}\otimes\rho_{\phi}(\gamma)\right\Vert _{\C^{r}\otimes V_{n}^{0}}\leq\left\Vert \sum_{\gamma\in\Gamma}a_{\gamma}\otimes\rho_{\infty}(\gamma)\right\Vert _{\C^{r}\otimes\ell^{2}(\Gamma)}+\epsilon.
\]
\end{cor}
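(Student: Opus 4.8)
The plan is to deduce Corollary~\ref{cor:bc} from Theorem~\ref{thm:Bordenave-Collins} and Proposition~\ref{prop:Pisier} by observing that an operator of the form $\sum_{\gamma\in\Gamma}a_\gamma\otimes\rho_\phi(\gamma)$ is literally an instance of the operators appearing in \eqref{eq:operator-example}. Indeed, since $\Gamma$ is free on $\gamma_1,\ldots,\gamma_d$, each $\gamma\in\Gamma$ is a reduced word in the $\gamma_i^{\pm 1}$, hence a (monomial) non-commutative polynomial $P_\gamma$ in the formal variables $\sigma_1,\ldots,\sigma_d,\sigma_1^{-1},\ldots,\sigma_d^{-1}$, and $\rho_\phi$ being a homomorphism gives $\rho_\phi(\gamma)=\std_n(P_\gamma(\sigma_1,\ldots,\sigma_d,\sigma_1^{-1},\ldots,\sigma_d^{-1}))$ using the Remark that $\std_n(\sigma_i)=\rho_\phi(\gamma_i)$; similarly $\rho_\infty(\gamma)=\rho_\infty(P_\gamma(\gamma_1,\ldots,\gamma_d,\ldots))$. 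So first I would enumerate the finitely many $\gamma$ with $a_\gamma\neq 0$ as $\gamma^{(1)},\ldots,\gamma^{(k)}$, set $P_i\eqdf P_{\gamma^{(i)}}$, and set $R\eqdf r$, so that $\sum_\gamma a_\gamma\otimes\rho_\phi(\gamma)=\sum_{i=1}^k a_{\gamma^{(i)}}\otimes\std_n(P_i(\ldots))$ and likewise on the $\ell^2(\Gamma)$ side.

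With this identification in place, the statement is immediate: apply Proposition~\ref{prop:Pisier} with these choices of $k$, $R$, $P_i$ and coefficient matrices $a_{\gamma^{(i)}}$. For any $\epsilon>0$, with probability tending to one the left-hand norm in \eqref{eq:operator-example} is at most the right-hand norm plus $\epsilon$, which is exactly the assertion of Corollary~\ref{cor:bc}. Since Proposition~\ref{prop:Pisier} is itself stated as a consequence of Theorem~\ref{thm:Bordenave-Collins}, and both are quoted results in the excerpt that I am allowed to assume, there is nothing further to prove.

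There is essentially no obstacle here; the only thing to be careful about is the bookkeeping of the dictionary between group elements of the free group $\Gamma$ and non-commutative monomials in the $\sigma_i^{\pm 1}$, i.e.\ that $\rho_\phi$ and $\rho_\infty$ are genuine homomorphisms so that the word $\gamma$ and the monomial $P_\gamma$ evaluate consistently on both sides. One should also note that the probability-one-in-the-limit quantifier is uniform over the finite data $(k,R,\{P_i\},\{a_{\gamma^{(i)}}\},\epsilon)$, which is automatic since all of this is fixed before letting $n\to\infty$. Hence the proof is a one-line invocation of Proposition~\ref{prop:Pisier} once the reduction is spelled out.

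I would therefore write the proof as: \emph{Proof.} Let $\gamma^{(1)},\dots,\gamma^{(k)}$ be the elements of $\Gamma$ with $a_{\gamma^{(i)}}\neq 0$. As $\Gamma$ is free on $\gamma_1,\dots,\gamma_d$, write each $\gamma^{(i)}$ as a reduced word, giving a non-commutative monomial $P_i$ in $\sigma_1,\dots,\sigma_d,\sigma_1^{-1},\dots,\sigma_d^{-1}$ with $\rho_\phi(\gamma^{(i)})=\std_n(P_i(\sigma_1,\dots,\sigma_d,\sigma_1^{-1},\dots,\sigma_d^{-1}))$ and $\rho_\infty(\gamma^{(i)})=\rho_\infty(P_i(\gamma_1,\dots,\gamma_d,\gamma_1^{-1},\dots,\gamma_d^{-1}))$, using that $\std_n(\sigma_i)=\rho_\phi(\gamma_i)$. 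Then $\sum_{\gamma\in\Gamma}a_\gamma\otimes\rho_\phi(\gamma)=\sum_{i=1}^k a_{\gamma^{(i)}}\otimes\std_n(P_i(\dots))$ and similarly for $\rho_\infty$, so applying Proposition~\ref{prop:Pisier} with $R=r$ and these $P_i$, $a_{\gamma^{(i)}}$ yields the claim. $\qed$
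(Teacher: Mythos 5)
Your proposal is correct and is essentially identical to the paper's own argument: the paper likewise observes that, since $\Gamma$ is free, any finitely supported sum $\sum_{\gamma}a_{\gamma}\otimes\rho_{\phi}(\gamma)$ is an instance of the operators in (\ref{eq:operator-example}) (each $\gamma$ being a monomial in the generators, with $\std_{n}(\sigma_{i})=\rho_{\phi}(\gamma_{i})$), and then invokes Theorem \ref{thm:Bordenave-Collins} together with Proposition \ref{prop:Pisier}. Your write-up just spells out the same dictionary between group elements and non-commutative monomials in slightly more detail.
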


No originality is claimed here: Corollary \ref{cor:bc} is essentially
already contained in \cite{BordenaveCollins} and in fact, \cite[Thm. 3]{BordenaveCollins}
is the version of this corollary when all $a_{i}$ are scalar.

\section{Cusp parametrix\label{sec:Cusp-parametrix}}

To simplify notation, we assume there is just one cusp of the finite-area
non compact hyperbolic surface $X$. This does not affect the arguments.
It follows from the Collar Lemma \cite[Thm. 4.4.6]{BuserBook} that
this single cusp can be identified with
\[
\CC\eqdf(1,\infty)\times S^{1}
\]
 with the metric
\begin{equation}
\frac{dr^{2}+dx^{2}}{r^{2}};\label{eq:cusp-metric}
\end{equation}
here the $x$ coordinate is in $S^{1}\eqdf\R/\Z$ and the $r$ coordinate
is in $[1,\infty)$. In the following, $\chi_{\CC}^{+},\chi_{\CC}^{-}:\CC\to[0,1]$
will be functions that are identically zero in a neighborhood of $\{1\}\times S^{1}$,
identically equal to $1$ in a neighborhood of $\{\infty\}\times S^{1}$,
and such that 
\begin{equation}
\chi_{\CC}^{+}\chi_{\CC}^{-}=\chi_{\CC}^{-}.\label{eq:stagger}
\end{equation}
We will extend both these functions by zero to functions on $X$,
and define
\[
\chi_{\CC,\phi}^{\pm}\eqdf\chi_{\CC}^{\pm}\circ\pi_{\phi}:X_{\phi}\to[0,1]
\]
where $\pi_{\phi}:X_{\phi}\to X$ is the covering map. Indeed, the
cusp of $X$ splits in $X_{\phi}$ into several regions of the form
\begin{equation}
(1,\infty)\times\left(\R/m\Z\right),\label{eq:covering-cusp}
\end{equation}
with $m\in\N$, and with the same metric (\ref{eq:cusp-metric}).
In these coordinates the covering map sends 
\[
\pi_{\phi}:(r,x+m\Z)\mapsto(r,x+\Z).
\]
In particular, it preserves the $r$ coordinate.
\begin{lem}
\label{lem:cusp-cutoff}For any $\epsilon>0$, it is possible to choose
$\chi_{\CC}^{\pm}$ as above so that for any $\phi$
\[
\|\nabla\chi_{\CC,\phi}^{+}\|_{\infty},\|\Delta\chi_{\CC,\phi}^{+}\|_{\infty}\leq\epsilon.
\]
\end{lem}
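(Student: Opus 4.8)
The plan is to work in the cusp coordinate $r\in(1,\infty)$ and build $\chi_\CC^{+}$ as a function of $r$ alone, so that its extension to $X$ and pullback to $X_\phi$ are automatically smooth. The crucial observation is the scaling behaviour of the cusp metric $(dr^2+dx^2)/r^2$: if $\chi(r)=\psi\bigl(\log r\bigr)$ for a smooth $\psi:\R\to[0,1]$, then in the metric $(\ref{eq:cusp-metric})$ one computes $|\nabla\chi|^2 = r^2(\partial_r\chi)^2 = (\psi'(\log r))^2$ and $\Delta\chi = -r^2\partial_r^2\chi - r\partial_r\chi = -\psi''(\log r)$ (up to the sign convention for $\Delta$ on this surface). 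Thus, parametrising by $u=\log r\in(0,\infty)$ removes all the $r$-dependence from the relevant norms, and the pointwise sup-norms of $\nabla\chi$ and $\Delta\chi$ on $\CC$ equal $\|\psi'\|_\infty$ and $\|\psi''\|_\infty$ respectively.

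Next I would choose $\psi$ explicitly. Fix a large parameter $L>0$ (to be determined by $\epsilon$), let $\psi$ be a fixed smooth nondecreasing function with $\psi\equiv 0$ on $(-\infty,0]$, $\psi\equiv 1$ on $[1,\infty)$, and set $\psi_L(u)\eqdf\psi(u/L)$. Then $\|\psi_L'\|_\infty = \|\psi'\|_\infty/L$ and $\|\psi_L''\|_\infty = \|\psi''\|_\infty/L^2$, both of which are $\le\epsilon$ once $L$ is large enough. Define $\chi_\CC^{+}(r,x)\eqdf\psi_L(\log r - 1)$, say, so that it vanishes near $\{1\}\times S^1$ (i.e. for $\log r\le 1$) and equals $1$ for $\log r\ge L+1$; shift the support of $\chi_\CC^{-}$ further out along the cusp, e.g. $\chi_\CC^{-}(r,x)\eqdf\psi_L(\log r - (L+2))$, so that $\chi_\CC^{+}\equiv 1$ on the support of $\chi_\CC^{-}$, which gives $(\ref{eq:stagger})$. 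Both functions are radial in $r$, hence descend to $X$ and satisfy the stated support conditions.

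The final point is the uniformity in $\phi$, which is where one must be slightly careful but which is ultimately immediate from the geometry described just before the lemma: the covering map $\pi_\phi$ preserves the $r$-coordinate, and on each component $(1,\infty)\times(\R/m\Z)$ of the preimage of the cusp the metric is again $(dr^2+dx^2)/r^2$ with the \emph{same} $r$. Since $\chi_{\CC,\phi}^{+}=\chi_\CC^{+}\circ\pi_\phi$ depends only on $r$, and differential operators are local, the pointwise identities $|\nabla\chi_{\CC,\phi}^{+}|(r,x+m\Z) = |\nabla\chi_\CC^{+}|(r,x+\Z)$ and $\Delta\chi_{\CC,\phi}^{+}(r,x+m\Z) = \Delta\chi_\CC^{+}(r,x+\Z)$ hold verbatim; away from the cusp both functions are locally constant ($0$ near the compact part, and on $X_\phi$ there are no other cusps contributing). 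Hence $\|\nabla\chi_{\CC,\phi}^{+}\|_\infty = \|\nabla\chi_\CC^{+}\|_\infty\le\epsilon$ and likewise for $\Delta$, for every $\phi$ simultaneously.

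I do not expect a serious obstacle here; the only mild subtlety is bookkeeping the sign and normalisation conventions for $\Delta$ and for $r$ versus $\log r$ (the hyperbolic cusp has the feature, unusual among ends of manifolds, that one can push a smooth transition region out to where the injectivity radius is tiny and thereby make derivatives of a cutoff as small as desired — this is exactly the "particular feature of the geometry of hyperbolic cusps" flagged in the overview). Making the logarithmic substitution $u=\log r$ at the outset renders everything a one-dimensional calculus exercise with constants independent of the cover.
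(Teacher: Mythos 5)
Your construction is essentially the paper's proof: take a cutoff that is radial in the cusp, obtained by stretching a fixed profile in the variable $u=\log r$ so that its first and second $u$-derivatives are $O(1/L)$ and $O(1/L^{2})$, and observe that the covering map preserves $r$ and the cusp metric on each component $(1,\infty)\times(\R/m\Z)$, so the pointwise bounds are identical on $X_{\phi}$ for every $\phi$; the staggering of $\chi_{\CC}^{-}$ further out along the cusp to get $\chi_{\CC}^{+}\chi_{\CC}^{-}=\chi_{\CC}^{-}$ is also as in the paper. One correction to an intermediate formula: for the metric $(dr^{2}+dx^{2})/r^{2}$ the (positive) Laplacian is $-r^{2}(\partial_{r}^{2}+\partial_{x}^{2})$, with no first-order term in $r$; passing to $u=\log r$, the volume form $e^{-u}\,du\wedge dx$ produces a drift term and the radial part becomes $-\partial_{u}^{2}+\partial_{u}$, so with $\chi_{\CC}^{+}=\psi_{L}(\log r-1)$ one gets $\Delta\chi_{\CC}^{+}=\psi_{L}'(\log r-1)-\psi_{L}''(\log r-1)$, not $-\psi_{L}''(\log r-1)$ as you wrote (your extra term $-r\partial_{r}$ is not there). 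This slip is harmless: since your scaling gives $\|\psi_{L}'\|_{\infty}\le\|\psi'\|_{\infty}/L$ and $\|\psi_{L}''\|_{\infty}\le\|\psi''\|_{\infty}/L^{2}$, choosing $L$ large makes $|\psi_{L}'-\psi_{L}''|\le\epsilon$, which is exactly the paper's estimate $|(\chi_{\CC}^{+})''-(\chi_{\CC}^{+})'|\le\epsilon$, and the gradient bound and the uniformity in $\phi$ are correct as you stated them.
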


\begin{proof}
Given $\epsilon>0$, let $\chi_{\CC}^{+}:[0,\infty)\to[0,1]$ be a
function such that $\chi_{\CC}^{+}(\tau)\equiv0$ for $\tau$ in $[0,1]$,
$\chi_{\CC}^{+}(\tau)\equiv1$ for $\tau\geq\tau_{0}$, for some $\tau_{0}>0$,
and such that we have the following bounds for the derivatives of
$\chi_{\CC}^{+}$
\[
\sup_{[0,\infty)}|(\chi_{\CC}^{+})'|,\,\sup_{[0,\infty)}|(\chi_{\CC}^{+})''|\leq\frac{\epsilon}{2}
\]
(this can be achieved by scaling some fixed cutoff).

Let $\CC'$ be any cusp region of $X_{\phi}$ as in (\ref{eq:covering-cusp}).
Using the change of coordinates $r=e^{\tau}$ we view $\CC'$ as
\[
(0,\infty)_{\tau}\times\R/m\Z
\]
 with the metric $(d\tau)^{2}+e^{-2\tau}(dx)^{2}$; $x$ being the
coordinate in $\R/m\Z$. In these coordinates, one calculates directly
from the formula for the metric that
\begin{align*}
\|\nabla\chi_{\CC,\phi}^{+}\|(\tau,x+m\Z) & =|[\chi_{\CC}^{+}]'(\tau)|\leq\epsilon
\end{align*}
 and 
\begin{align*}
|\Delta\chi_{\CC,\phi}^{+}|(\tau,x+m\Z) & =|[\chi_{\CC}^{+}]''(\tau)-[\chi_{\CC}^{+}]'(\tau)|\leq\epsilon
\end{align*}
 as required. Finally, one can easily choose $\chi_{\CC}^{-}$ to
be a function with $\chi_{\CC}^{-}(\tau)$ $\equiv0$ for $\tau\leq\tau_{0}$
and $\chi_{\CC}^{-}(\tau)\equiv1$ for $\tau\geq2\tau_{0}$. This
will fulfill (\ref{eq:stagger}).
\end{proof}
Let $\CC_{\phi}$ denote the subset of $X_{\phi}$ that covers $\CC$.
It is convenient, to avoid complicated discussions about Sobolev spaces,
to extend $\CC$ to the parabolic cylinder
\[
\tilde{\CC}\eqdf(0,\infty)\times S^{1}
\]
 with the same metric (\ref{eq:cusp-metric}), and let $\tilde{\CC}_{\phi}$
be the corresponding extension of $\CC_{\phi}$ (the union of extensions
of (\ref{eq:covering-cusp}) to $(0,\infty)\times\R/m\Z$). 

Let $H^{2}(\tilde{\CC}_{\phi})$ denote the completion of $C_{c}^{\infty}(\tilde{\CC}_{\phi})$
with respect to the given norm
\[
\|f\|_{H^{2}}^{2}\eqdf\|f\|_{L^{2}}^{2}+\|\Delta f\|_{L^{2}}^{2}.
\]
The Laplacian $\Delta=\Delta_{\tilde{\CC}_{\phi}}$ extends uniquely
from $C_{c}^{\infty}(\tilde{\CC}_{\phi})$ to a self-adjoint unbounded
operator on $L^{2}(\tilde{\CC}_{\phi})$ with domain $H^{2}(\tilde{\CC}_{\phi})$. 
\begin{lem}
\label{lem:bounded-below}For any $f\in H^{2}(\tilde{\CC}_{\phi})$,
we have $\langle\Delta f,f\rangle\geq\frac{1}{4}\|f\|^{2}.$
\end{lem}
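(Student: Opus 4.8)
The plan is to exploit the product structure of the parabolic cylinder $\tilde{\CC}_\phi$ together with a Fourier expansion in the $S^1$-direction, exactly as one does for the classical lower bound $\tfrac14$ on the cusp of a finite-area surface. First I would reduce to a single connected cusp region of the form $(0,\infty)_r \times (\R/m\Z)_x$ with metric $(dr^2+dx^2)/r^2$, since $\tilde{\CC}_\phi$ is a disjoint union of such, and the claimed inequality $\langle\Delta f,f\rangle \geq \tfrac14\|f\|^2$ is additive over the connected components. On such a region the Laplacian is $\Delta = -r^2(\partial_r^2 + \partial_x^2)$ and the volume form is $r^{-2}\,dr\,dx$.

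The key step is to expand $f$ in a Fourier series in the periodic variable: $f(r,x) = \sum_{k\in\frac{2\pi}{m}\Z} f_k(r)\, e^{ikx}$ (or the real analogue). By Parseval and the product structure, both $\|f\|_{L^2}^2$ and $\langle \Delta f, f\rangle$ decompose as sums over $k$ of one-dimensional quantities, so it suffices to prove, for each mode, that
\[
\int_0^\infty \left( |f_k'(r)|^2 + k^2 |f_k(r)|^2 \right) dr \;\geq\; \frac14 \int_0^\infty \frac{|f_k(r)|^2}{r^2}\, dr .
\]
For $k\neq 0$ this will be overkill — the $k^2$ term alone dominates after noting $r$ ranges over a suitable interval, though to be careful on $(0,\infty)$ one still leans on the $|f_k'|^2$ term. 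The heart of the matter is the zero mode $k=0$, where the inequality $\int_0^\infty |g'(r)|^2\,dr \geq \tfrac14 \int_0^\infty \frac{|g(r)|^2}{r^2}\,dr$ is precisely the one-dimensional Hardy inequality on the half-line, valid for $g\in C_c^\infty(0,\infty)$ with the sharp constant $\tfrac14$. I would either invoke Hardy's inequality directly or prove it in one line by the substitution $g(r) = r^{1/2} h(r)$, integrating by parts and discarding a nonnegative term; the density of $C_c^\infty$ in $H^2$ then extends it to all of $H^2(\tilde{\CC}_\phi)$.

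The main obstacle is bookkeeping rather than conceptual: one must justify the Fourier decomposition of the quadratic form $\langle\Delta f, f\rangle$ and the interchange of sum and integral for $f$ in the Sobolev space $H^2(\tilde{\CC}_\phi)$ (as opposed to smooth compactly supported $f$), and one must handle the behavior as $r\to 0^+$ and $r\to\infty$ so that the integration by parts in Hardy's inequality produces no boundary contribution. Working first with $f\in C_c^\infty(\tilde{\CC}_\phi)$, where all manipulations are manifestly legitimate and no boundary terms survive, and then passing to the limit using that $\Delta$ is essentially self-adjoint on this domain with form domain controlled by $\|f\|^2 + \|\Delta f\|^2$, disposes of this cleanly. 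One should also double-check that the constant does not degrade when several cusp components of differing circumference $m$ are present — but since the bound is uniform in $m$ (Hardy's constant $\tfrac14$ is independent of the circumference) and the form is additive, this causes no trouble.
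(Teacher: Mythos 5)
Your argument is correct, and at its core it is a variant of the paper's own proof rather than a fundamentally different one. The paper likewise reduces to a single component $(0,\infty)\times(\R/m\Z)$, proves the bound for compactly supported smooth functions, and extends to $H^{2}(\tilde{\CC}_{\phi})$ by density and continuity of the form $\langle\Delta f,f\rangle$ in the graph norm (no essential self-adjointness is needed, only this continuity). The difference is that instead of a Fourier decomposition the paper substitutes $r=e^{\tau}$ and conjugates the full two-dimensional Laplacian by $e^{\tau/2}$, getting $e^{-\tau/2}\Delta e^{\tau/2}=-\partial_{\tau}^{2}+\tfrac{1}{4}-e^{2\tau}\partial_{x}^{2}$, and then discards the two manifestly nonnegative contributions after integrating by parts; this is exactly the $g=r^{1/2}h$ substitution you propose for your one-line proof of Hardy, applied directly in two dimensions. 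So your route buys a reduction to the named sharp one-dimensional Hardy inequality, at the cost of the (routine but real) bookkeeping of justifying the mode decomposition of the quadratic form, which the paper's direct 2D integration by parts sidesteps. One small correction to your discussion of the nonzero modes: for $k\neq0$ the term $k^{2}|f_{k}|^{2}$ does \emph{not} dominate $\tfrac{1}{4}|f_{k}|^{2}/r^{2}$ near $r=0$, since the weight $r^{-2}$ blows up there, so these modes are not handled by the $k^{2}$ term alone; but, as you partially acknowledge, Hardy applied to $\int|f_{k}'|^{2}$ covers every mode uniformly (the $k^{2}$ term can simply be thrown away), so this does not affect the validity of the proof.
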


\begin{proof}
This is similar to \cite[Lemma 3.2]{Magee}. It suffices to prove
this for $\tilde{\CC}_{\phi}$ replaced by $(0,\infty)\times(\R/m\Z)$
with the metric (\ref{eq:cusp-metric}) i.e. with only one connected
component. Then changing coordinates to $\tau$ we are working in
the region $(-\infty,\infty)\times(\R/m\Z)$ with the metric $(d\tau)^{2}+e^{-2\tau}(dx)^{2}$.
The corresponding volume form is $e^{-\tau}d\tau\wedge dx$ and the
Laplacian is given by $\Delta=-e^{\tau}\frac{\partial}{\partial\tau}e^{-\tau}\frac{\partial}{\partial\tau}-e^{2\tau}\frac{\partial^{2}}{\partial\theta^{2}}$.
Now suppose $f\in C_{c}^{\infty}((-\infty,\infty)\times(\R/m\Z)).$
We calculate
\[
e^{-\tau/2}\Delta e^{\tau/2}=-\frac{\partial^{2}}{\partial\tau^{2}}+\frac{1}{4}-e^{2\tau}\frac{\partial^{2}}{\partial\theta^{2}}
\]
so if $g=e^{-\tau/2}f\in C_{c}^{\infty}((-\infty,\infty)\times(\R/m\Z))$
we have 
\begin{align*}
\int\Delta[f]\bar{f}e^{-\tau}d\tau\wedge dx & =\int_{-\infty}^{\infty}\int_{0}^{n}[e^{-\tau/2}\Delta e^{\tau/2}]\left(g\right)\bar{g}d\tau\wedge dx\\
 & \geq\frac{1}{4}\int_{-\infty}^{\infty}\int_{0}^{n}g\bar{g}d\tau\wedge dx=\frac{1}{4}\int_{-\infty}^{\infty}\int_{0}^{n}f\bar{f}e^{-\tau}d\tau\wedge dx.
\end{align*}
The inequality here used integrating by parts. The inequality obtained
now extends to $H^{2}(\tilde{\CC}_{\phi})$ by density of $C_{c}^{\infty}(\tilde{C}_{\phi})$
therein and continuity of $\langle\Delta f,f\rangle$.
\end{proof}
Lemma \ref{lem:bounded-below} implies that the resolvent operator
\[
R_{\tilde{\CC}_{\phi}}(s)\eqdf(\Delta-s(1-s))^{-1}:L^{2}(\tilde{\CC}_{\phi})\to H^{2}(\tilde{\CC}_{\phi})
\]
 is a holomorphic family of bounded operators in $\Re(s)>\frac{1}{2}$,
each a bijection to their image. This gives an a priori bound for
the resolvent: using 
\[
\left(\Delta-s(1-s)\right)R_{\tilde{\CC}_{\phi}}(s)f=f
\]
and Lemma \ref{lem:bounded-below} we obtain that for $f\in L^{2}(\tilde{\CC}_{\phi})$
and $s\in(\frac{1}{2},\infty)$
\begin{align}
\|R_{\tilde{\CC}_{\phi}}(s)f\|_{L^{2}} & \leq\left(\frac{1}{4}-s(1-s)\right)^{-1}\|f\|_{L^{2}}\label{eq:L2-resolvent}
\end{align}
 and as
\[
\Delta R_{\tilde{\CC}_{\phi}}(s)f=f+s(1-s)R_{\tilde{\CC}_{\phi}}(s)f
\]
 we obtain for $s\in(\frac{1}{2},\infty)$
\begin{align}
\|\Delta R_{\tilde{\CC}_{\phi}}(s)f\|_{L^{2}} & \leq\|f\|_{L^{2}}+s(1-s)\|R_{\tilde{\CC}_{\phi}}(s)f\|_{L^{2}}\nonumber \\
 & \leq\left(1+\frac{s(1-s)}{\frac{1}{4}-s(1-s)}\right)\|f\|_{L^{2}}\nonumber \\
 & =\frac{1}{1-4s(1-s)}\|f\|_{L^{2}}.\label{eq:secondH2-resolvent}
\end{align}
We now define the cusp parametrix as 
\begin{equation}
\M_{\phi}^{\cusp}(s)\eqdf\chi_{\CC,\phi}^{+}R_{\tilde{\CC}_{\phi}}(s)\chi_{\CC,\phi}^{-}.\label{eq:mcusp-def}
\end{equation}
Here, 
\begin{itemize}
\item (multiplication by) $\chi_{\CC,\phi}^{-}$ is viewed as an operator
from $L^{2}(X_{\phi})$ to $L^{2}(\tilde{\CC}_{\phi})$ by mapping
first to $L^{2}(\CC_{\phi})$ and then extending by zero. This is
a bounded linear operator.
\item $R_{\tilde{\CC}_{\phi}}(s)$ is a bounded operator from $L^{2}(\tilde{\CC}_{\phi})$
to $H^{2}(\tilde{\CC}_{\phi})$.
\item (multiplication by) $\chi_{\CC,\phi}^{+}$ is viewed as a operator
from $H^{2}(\tilde{\CC}_{\phi})$ to $H^{2}(X_{\phi})$, using that
$\chi_{\CC,\phi}^{+}$ localizes to $\CC_{\phi}$ and then extending
by zero. This operator is bounded because derivatives of $\chi_{\CC,\phi}^{+}$
are bounded and compactly supported.
\end{itemize}
Hence 
\[
\mathbb{M}_{\phi}^{\cusp}(s):L^{2}(X_{\phi})\to H^{2}(X_{\phi})
\]
is a bounded operator.

The covering map $\CC_{\phi}\to\CC$ extends in an obvious way to
a covering map $\tilde{\CC}_{\phi}\to\tilde{\CC}$ that intertwines
the two Laplacian operators. This, together with the fact that multiplication
by $\chi_{\CC,\phi}^{-}$ and $\chi_{\CC,\phi}^{+}$ leave invariant
the subspaces of functions lifted through the covering map, one sees
that
\[
\M_{\phi}^{\cusp}(s)(L_{\mathrm{new}}^{2}(X_{\phi}))\subset H_{\new}^{2}(X_{\phi}).
\]
Because $\chi_{\CC,\phi}^{+}\chi_{\CC,\phi}^{-}=\chi_{\CC,\phi}^{-}$,
\begin{equation}
(\Delta-s(1-s))\M_{\phi}^{\cusp}(s)=\chi_{\CC,\phi}^{-}+[\Delta,\chi_{\CC,\phi}^{+}]R_{\tilde{\CC}_{\phi}}(s)\chi_{\CC,\phi}^{-}=\chi_{\CC,\phi}^{-}+\LL_{\phi}^{\cusp}(s)\label{eq:cusp-parametrix-equation}
\end{equation}
 where 
\[
\LL_{\phi}^{\cusp}(s)\eqdf[\Delta,\chi_{\CC,\phi}^{+}]R_{\tilde{\CC}_{\phi}}(s)\chi_{\CC,\phi}^{-}
\]
and $[A,B]\eqdf AB-BA$ denotes the commutator of linear maps. Here
again we view $\chi_{\CC,\phi}^{-}$ and $R_{\tilde{\CC}_{\phi}}(s)$
as above, and $[\Delta,\chi_{\CC,\phi}^{+}]:H^{2}(\tilde{\CC}_{\phi})\to L^{2}(X_{\phi})$.
This means that $\LL_{\phi}^{\cusp}(s)$ is an operator on $L^{2}(X_{\phi})$.
By similar arguments to before, using that $[\Delta,\chi_{\CC,\phi}^{+}]$
only involves radial derivatives (since $\chi_{\CC,\phi}^{+}$ is
radial), we obtain 
\[
\LL_{\phi}^{\cusp}(s)\left(L_{\mathrm{new}}^{2}(X_{\phi})\right)\subset L_{\mathrm{new}}^{2}(X_{\phi}).
\]

\begin{lem}
\label{lem:cusp-error-bound-init}Given $s_{0}>\frac{1}{2}$, there
exists $C(s_{0})>0$ such that for $s\in[s_{0},1]$, the operator
$\LL_{\phi}^{\cusp}(s)$ is a self-adjoint, bounded operator on $L^{2}(X_{\phi})$
with operator norm
\[
\|\LL_{\phi}^{\cusp}(s)\|_{L^{2}}\leq\left(\|(\Delta\chi_{\CC,\phi}^{+})\|_{\infty}+2\|\nabla\chi_{\CC,\phi}^{+}\|_{\infty}\right)C(s_{0}).
\]
\end{lem}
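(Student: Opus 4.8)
The plan is to expand the commutator $[\Delta,\chi_{\CC,\phi}^{+}]$ explicitly and then bound each piece using the a priori resolvent estimates (\ref{eq:L2-resolvent}) and (\ref{eq:secondH2-resolvent}). First I would recall the standard identity for the commutator of the Laplacian with a (real, smooth, compactly supported modulo $\Gamma$) multiplication operator: for $g$ in the relevant domain,
\[
[\Delta,\chi_{\CC,\phi}^{+}]g=(\Delta\chi_{\CC,\phi}^{+})\,g-2\,\nabla\chi_{\CC,\phi}^{+}\cdot\nabla g.
\]
Since $\chi_{\CC,\phi}^{+}$ is radial in the cusp coordinate $r$ (equivalently $\tau$), the gradient term only sees the $\partial_{\tau}$-derivative of $g$, and both $\Delta\chi_{\CC,\phi}^{+}$ and $\nabla\chi_{\CC,\phi}^{+}$ are supported in the fixed compact collar where $\chi_{\CC,\phi}^{+}$ transitions from $0$ to $1$ — in particular their sup-norms are the quantities appearing in the statement. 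Applying this with $g=R_{\tilde{\CC}_{\phi}}(s)\chi_{\CC,\phi}^{-}f$, I get
\[
\LL_{\phi}^{\cusp}(s)f=(\Delta\chi_{\CC,\phi}^{+})\,R_{\tilde{\CC}_{\phi}}(s)\chi_{\CC,\phi}^{-}f-2\,\nabla\chi_{\CC,\phi}^{+}\cdot\nabla\!\left(R_{\tilde{\CC}_{\phi}}(s)\chi_{\CC,\phi}^{-}f\right).
\]

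For the $L^{2}$ bound: the zeroth-order term is controlled by $\|\Delta\chi_{\CC,\phi}^{+}\|_{\infty}$ times $\|R_{\tilde{\CC}_{\phi}}(s)\|_{L^2\to L^2}$, and since $s\in[s_0,1]$ the quantity $(\tfrac14-s(1-s))^{-1}$ from (\ref{eq:L2-resolvent}) is bounded by a constant depending only on $s_0$ (it is largest at $s=1$, where it equals $4$, in fact, so one can even be explicit). For the first-order term I need to bound $\|\nabla R_{\tilde{\CC}_{\phi}}(s)h\|_{L^2}$ in terms of $\|h\|_{L^2}$ where $h=\chi_{\CC,\phi}^{-}f$, $\|h\|_{L^2}\le\|f\|_{L^2}$. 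This follows by interpolation / integration by parts: for $u=R_{\tilde\CC_\phi}(s)h\in H^2(\tilde\CC_\phi)$ one has $\|\nabla u\|_{L^2}^2=\langle\Delta u,u\rangle\le\|\Delta u\|_{L^2}\|u\|_{L^2}$, and both factors on the right are controlled by constants depending on $s_0$ times $\|h\|_{L^2}$ via (\ref{eq:L2-resolvent}) and (\ref{eq:secondH2-resolvent}). Collecting, $\|\LL_{\phi}^{\cusp}(s)f\|_{L^2}\le\big(\|\Delta\chi_{\CC,\phi}^{+}\|_{\infty}+2\|\nabla\chi_{\CC,\phi}^{+}\|_{\infty}\big)C(s_0)\|f\|_{L^2}$ for a suitable $C(s_0)$, absorbing all the $s_0$-dependent resolvent constants; this also shows boundedness.

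It remains to check self-adjointness. The cleanest route is to note that $\LL_{\phi}^{\cusp}(s)=(\Delta-s(1-s))\M_{\phi}^{\cusp}(s)-\chi_{\CC,\phi}^{-}$ by (\ref{eq:cusp-parametrix-equation}), and to compute the adjoint of $\M_{\phi}^{\cusp}(s)=\chi_{\CC,\phi}^{+}R_{\tilde\CC_\phi}(s)\chi_{\CC,\phi}^{-}$ directly: since for real $s$ the resolvent $R_{\tilde\CC_\phi}(s)$ is self-adjoint (being $(\Delta-s(1-s))^{-1}$ with $\Delta$ self-adjoint and $s(1-s)$ real), and the cutoffs are real multiplication operators, $\M_{\phi}^{\cusp}(s)^{*}=\chi_{\CC,\phi}^{-}R_{\tilde\CC_\phi}(s)\chi_{\CC,\phi}^{+}$. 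Then, using the staggering relation $\chi_{\CC,\phi}^{+}\chi_{\CC,\phi}^{-}=\chi_{\CC,\phi}^{-}$ (so also $\chi_{\CC,\phi}^{-}\chi_{\CC,\phi}^{+}=\chi_{\CC,\phi}^{-}$) together with $[\Delta-s(1-s),R_{\tilde\CC_\phi}(s)]=0$, one can rearrange $\LL_{\phi}^{\cusp}(s)=[\Delta,\chi_{\CC,\phi}^{+}]R_{\tilde\CC_\phi}(s)\chi_{\CC,\phi}^{-}$ into a manifestly symmetric expression — e.g. $\chi_{\CC,\phi}^{+}(\Delta-s(1-s))\chi_{\CC,\phi}^{-}\!\cdot\! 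R_{\tilde\CC_\phi}(s)\cdot\chi_{\CC,\phi}^{-}(\Delta-s(1-s))\chi_{\CC,\phi}^{+}$ modulo terms that cancel — and conclude $(\LL_{\phi}^{\cusp}(s))^{*}=\LL_{\phi}^{\cusp}(s)$.

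The main obstacle I anticipate is the self-adjointness bookkeeping: the operator $\LL_{\phi}^{\cusp}(s)$ is presented in the asymmetric form $[\Delta,\chi^+]R\chi^-$, and massaging it into a symmetric form requires careful use of the staggering identity (\ref{eq:stagger}) and of the fact that $R_{\tilde\CC_\phi}(s)$ commutes with $\Delta-s(1-s)$ on the appropriate domains, while keeping track of which operator maps which Sobolev space. The norm bound itself is routine once one has (\ref{eq:L2-resolvent})–(\ref{eq:secondH2-resolvent}) and the elementary gradient estimate $\|\nabla u\|^2=\langle\Delta u,u\rangle$; the only mild care needed there is that everything should first be verified on $C_c^\infty$ and extended by density, exactly as in the proof of Lemma \ref{lem:bounded-below}.
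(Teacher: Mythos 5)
Your bound on the operator norm is essentially the paper's own proof: expand $[\Delta,\chi_{\CC,\phi}^{+}]=(\Delta\chi_{\CC,\phi}^{+})-2\nabla\chi_{\CC,\phi}^{+}\cdot\nabla$, control the zeroth order term by $\|\Delta\chi_{\CC,\phi}^{+}\|_{\infty}$, control the first order term by pointwise Cauchy--Schwarz together with $\|\nabla u\|_{L^{2}}^{2}=\langle\Delta u,u\rangle\leq\|\Delta u\|\,\|u\|$ (justified on $C_{c}^{\infty}(\tilde{\CC}_{\phi})$ and extended by density, exactly as in Lemma \ref{lem:bounded-below}), and then compose with multiplication by $\chi_{\CC,\phi}^{-}$ (norm $\leq1$) and with $R_{\tilde{\CC}_{\phi}}(s)$ viewed as a bounded map $L^{2}(\tilde{\CC}_{\phi})\to H^{2}(\tilde{\CC}_{\phi})$ of norm $C(s_{0})$ coming from (\ref{eq:L2-resolvent}) and (\ref{eq:secondH2-resolvent}). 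That part is correct and matches the paper.

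The self-adjointness paragraph is the genuine gap, and the proposed fix cannot work. The adjoint of $\LL_{\phi}^{\cusp}(s)=[\Delta,\chi_{\CC,\phi}^{+}]R_{\tilde{\CC}_{\phi}}(s)\chi_{\CC,\phi}^{-}$ is (on the natural dense domains) $\chi_{\CC,\phi}^{-}R_{\tilde{\CC}_{\phi}}(s)[\Delta,\chi_{\CC,\phi}^{+}]^{*}=-\chi_{\CC,\phi}^{-}R_{\tilde{\CC}_{\phi}}(s)[\Delta,\chi_{\CC,\phi}^{+}]$, since the commutator of two symmetric operators is formally anti-symmetric; your claimed rearrangement into a ``manifestly symmetric expression'' has no valid version. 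In fact the operator is not self-adjoint: it is an integral operator with real kernel $[\Delta_{x},\chi_{\CC,\phi}^{+}]R_{\tilde{\CC}_{\phi}}(s;x,y)\,\chi_{\CC,\phi}^{-}(y)$, which is supported where $x$ lies in the transition region of $\chi_{\CC,\phi}^{+}$ and $y\in\mathrm{supp}\,\chi_{\CC,\phi}^{-}$; by the staggering (\ref{eq:stagger}) these two regions overlap only in a null set, so a symmetric kernel would have to vanish almost everywhere, which it does not. You are, however, in good company: the paper's own proof of this lemma establishes only boundedness and the norm estimate and silently ignores the ``self-adjoint'' clause, which is never used later (invertibility of $1+\LL_{\phi}^{\i}(s)+\LL_{\phi}^{\cusp}(s)$ is deduced purely from a norm bound $<1$ via a Neumann series). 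So the correct resolution is to prove only boundedness and the displayed estimate and drop the self-adjointness claim, not to attempt a symmetrization.
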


\begin{proof}
As an operator on $H^{2}(\tilde{\CC}_{\phi})$
\[
[\Delta,\chi_{\CC,\phi}^{+}]=(\Delta\chi_{\CC,\phi}^{+})-2(\nabla\chi_{\CC,\phi}^{+})\cdot\nabla.
\]
The first summand is a multiplication operator; for $f\in H^{2}(\tilde{\CC}_{\phi})$
we have 
\begin{equation}
\|(\Delta\chi_{\CC,\phi}^{+})f\|_{L^{2}}\leq\|(\Delta\chi_{\CC,\phi}^{+})\|_{\infty}\|f\|_{L^{2}}\label{eq:temp1}
\end{equation}
and by Schwarz inequality if $\|f\|_{H^{2}}\leq1$ then 
\begin{align}
\|(\nabla\chi_{\CC,\phi}^{+})\cdot\nabla f\|_{L^{2}} & \leq\|\nabla\chi_{\CC,\phi}^{+}\|_{\infty}\|\nabla f\|_{L^{2}}\nonumber \\
 & =\|\nabla\chi_{\CC,\phi}^{+}\|_{\infty}\langle\Delta f,f\rangle^{\frac{1}{2}}\leq\|\nabla\chi_{\CC,\phi}^{+}\|_{\infty}\|\Delta f\|^{\frac{1}{2}}\|f\|^{\frac{1}{2}}\nonumber \\
 & \leq\|\nabla\chi_{\CC,\phi}^{+}\|_{\infty}.\label{eq:temp1-1}
\end{align}
The two estimates (\ref{eq:temp1}), (\ref{eq:temp1-1}) hence show
that $[\Delta,\chi_{\CC,\phi}^{+}]$ has norm bounded by $\|(\Delta\chi_{\CC,\phi}^{+})\|_{\infty}+2\|\nabla\chi_{\CC,\phi}^{+}\|_{\infty}$
as a map $H^{2}(\tilde{\CC}_{\phi})\to L^{2}(X_{\phi})$. Since multiplication
by $\chi_{\CC,\phi}^{-}$ has norm $\leq1$ from $L^{2}$ to $L^{2}$,
and $R_{\tilde{\CC}_{\phi}}(s)$ has norm $\leq C(s_{0})$ from $L^{2}(\tilde{\CC}_{\phi})$
to $H^{2}(\tilde{\CC}_{\phi})$ by (\ref{eq:L2-resolvent}) and (\ref{eq:secondH2-resolvent}),
we obtain the stated result.
\end{proof}
We can now obtain the following key proposition by combining Lemmas
\ref{lem:cusp-cutoff} and \ref{lem:cusp-error-bound-init}.
\begin{prop}
\label{prop:L-cusp-bound}Given any $s_{0}>\frac{1}{2}$, we can choose
$\chi_{\CC}^{+}$ and $\chi_{\CC}^{-}$ (depending on $s_{0})$ such
that for any $n\in\N$ and $\phi\in\Hom(\Gamma,S_{n})$ and any $s\in[s_{0},1]$,
we have for the operator norm of $\LL_{\phi}^{\cusp}(s)$ on $L^{2}(X_{\phi})$
\[
\|\LL_{\phi}^{\cusp}(s)\|_{L^{2}}\leq\frac{1}{5}.
\]
\end{prop}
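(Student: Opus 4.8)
The plan is to combine Lemma~\ref{lem:cusp-error-bound-init} with Lemma~\ref{lem:cusp-cutoff}; the substantive work having already been done in those two lemmas, the only care needed here is the order in which the constants are chosen. First I would apply Lemma~\ref{lem:cusp-error-bound-init} to the given $s_0 > \tfrac12$ to obtain the constant $C(s_0) > 0$. The crucial feature, built into that lemma through the a priori resolvent estimates (\ref{eq:L2-resolvent}) and (\ref{eq:secondH2-resolvent}), is that $C(s_0)$ depends on $s_0$ alone, and in particular is independent of $n$ and of $\phi$.

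Next, set $\epsilon \eqdf \frac{1}{15\,C(s_0)}$ and apply Lemma~\ref{lem:cusp-cutoff} with this value of $\epsilon$, producing cutoff functions $\chi_\CC^{\pm}$ (depending on $s_0$) with $\|\nabla\chi_{\CC,\phi}^+\|_\infty \le \epsilon$ and $\|\Delta\chi_{\CC,\phi}^+\|_\infty \le \epsilon$ for \emph{every} $\phi$ and $n$. What makes this last uniformity possible — and is the one point worth emphasising — is that the covering map $\tilde{\CC}_\phi \to \tilde{\CC}$ preserves the $r$-coordinate, so these sup-norms are computed from the single fixed radial profile of $\chi_\CC^+$ and are insensitive to the number and lengths of the covering-cusp components (\ref{eq:covering-cusp}).

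Substituting these cutoffs into the estimate of Lemma~\ref{lem:cusp-error-bound-init} then yields, for all $s \in [s_0,1]$, all $n \in \N$, and all $\phi \in \Hom(\Gamma,S_n)$,
\[
\|\LL_\phi^{\cusp}(s)\|_{L^2} \le \bigl(\|\Delta\chi_{\CC,\phi}^+\|_\infty + 2\|\nabla\chi_{\CC,\phi}^+\|_\infty\bigr)\,C(s_0) \le 3\epsilon\,C(s_0) = \tfrac15,
\]
which is the claim. I do not anticipate any genuine obstacle: the only thing to watch is the quantifier order ($C(s_0)$ before $\epsilon$, and $\epsilon$ before the cutoffs), and the particular value $\tfrac15$ plays no role — any prescribed positive bound is reached by shrinking $\epsilon$ accordingly.
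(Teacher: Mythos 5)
Your proposal is correct and matches the paper's argument, which likewise obtains the proposition by combining Lemma~\ref{lem:cusp-error-bound-init} (giving $C(s_0)$ uniform in $n$ and $\phi$) with Lemma~\ref{lem:cusp-cutoff} applied at a sufficiently small $\epsilon$. Your attention to the quantifier order and to the $r$-coordinate-preserving covering map, which makes the cutoff bounds uniform over all $\phi$, is exactly the point the paper relies on.
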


\emph{We assume that $\chi_{\CC}^{-}$ and $\chi_{\CC}^{+}$ have
now been fixed to satisfy Proposition \ref{prop:L-cusp-bound} in
the rest of the paper. Hence all constants may depend on these functions.}

\section{Interior parametrix}

\subsection{Background on spectral theory of hyperbolic plane\label{subsec:Background-on-spectral}}

\subsubsection*{Resolvent}

Let 
\begin{align*}
R_{\H}(s) & :L^{2}(\H)\to L^{2}(\H),\\
R_{\H}(s) & \eqdf(\Delta_{\H}-s(1-s))^{-1}.
\end{align*}
Since $\spec(\Delta_{\H})\subset[\frac{1}{4},\infty$) \cite[Thm. 4.3]{Borthwick},
this is well defined as a bounded operator for $\Re(s)>\frac{1}{2}$. 

Given $x,y\in\H$, we write $r=d_{\H}(x,y)$ where $d_{\H}$ is hyperbolic
distance and 
\[
\sigma\eqdf\cosh^{2}\left(\frac{r}{2}\right).
\]
The operator $R_{\H}(s)$ is an integral operator with kernel \cite[Prop. 4.2]{Borthwick}
\begin{equation}
R_{\H}(s;x,y)=\frac{1}{4\pi}\int_{0}^{1}\frac{t^{s-1}(1-t)^{s-1}}{(\sigma-t)^{s}}dt.\label{eq:resolvent-expression}
\end{equation}

For $\sigma>1$ and $t\in(0,1)$ we have 
\begin{align*}
\frac{\partial}{\partial r}\frac{t^{s-1}(1-t)^{s-1}}{(\sigma-t)^{s}}= & -s\sinh\left(\frac{r}{2}\right)\cosh\left(\frac{r}{2}\right)\frac{t^{s-1}(1-t)^{s-1}}{(\sigma-t)^{s+1}},\\
\frac{\partial}{\partial s}\frac{t^{s-1}(1-t)^{s-1}}{(\sigma-t)^{s}}= & \log\left(\frac{t(1-t)}{(\sigma-t)}\right)\frac{t^{s-1}(1-t)^{s-1}}{(\sigma-t)^{s}},\\
\frac{\partial^{2}}{\partial s\partial r}\frac{t^{s-1}(1-t)^{s-1}}{(\sigma-t)^{s}}= & -\sinh\left(\frac{r}{2}\right)\cosh\left(\frac{r}{2}\right)\frac{t^{s-1}(1-t)^{s-1}}{(\sigma-t)^{s+1}}\\
 & \cdot\left[1+s\log\left(\frac{t(1-t)}{(\sigma-t)}\right)\right].
\end{align*}
Each of these are smooth in $(s,r,t)\in[\frac{1}{2},1]\times[1,\infty)\times(0,1)$.
Because for $s,r$ in a fixed compact set of $[\frac{1}{2},1]\times[1,\infty)$,
these all have absolute values majorized by integrable functions of
$t\in(0,1)$, we can interchange derivatives and integrals to get
corresponding bounds for $R_{\H}$ as follows. We define
\[
R_{\H}(s;r)\eqdf R_{\H}(s;x,y).
\]
Firstly, there is a constant $C>0$ such that for $r_{0}\geq1$ and
$s\in[\frac{1}{2},1]$we have 
\begin{equation}
\left|R_{\H}(s;r_{0})\right|,\left|\frac{\partial R_{\H}}{\partial r}(s;r_{0})\right|\leq Ce^{-sr_{0}}.\label{eq:resolvent-bounds-plane-r-deriv}
\end{equation}
Secondly, for any $T>1$ and $r_{0}$ in the compact region $[1,T+1]$
we obtain for constant $c=c(T)>0$, for all $s_{0}\in[\frac{1}{2},1]$,
\begin{align}
\left|\frac{\partial R_{\H}}{\partial s}(s_{0};r_{0})\right|,\left|\frac{\partial^{2}R_{\H}}{\partial s\partial r}(s_{0};r_{0})\right| & \leq c(T).\label{eq:resolvent-bounds-plane-s-deriv}
\end{align}

\subsubsection*{Integral operators}

If $k_{0}:[0,\infty)\to\R$ is smooth and compactly supported, which
will suffice here, then one can construct a kernel
\[
k(x,y)\eqdf k_{0}(d_{\H}(x,y))
\]
with corresponding integral operator $C^{\infty}(\H)\to C^{\infty}(\H)$
\[
K[f](x)\eqdf\int_{y\in\H}k(x,y)f(y)d\H(y)
\]
where $d\H$ is the hyperbolic area form on $\H$. Such an operator
commutes with the Laplacian on $\H$ and hence preserves its generalized
eigenspaces. If $f\in C^{\infty}(\H)$ is a generalized eigenfunction
of $\Delta$ with eigenvalue $\frac{1}{4}+\xi^{2}$, $\xi\geq0$,
then by \cite[Thm. 3.7, Lemma 3.9]{BergeronBook} (cf. also Selberg's
original article \cite{Selberg})
\[
K[f]=h(\xi)f
\]
where 
\[
h(\xi)=\sqrt{2}\int_{-\infty}^{\infty}e^{i\xi u}\int_{|u|}^{\infty}\frac{k_{0}(\rho)\sinh(\rho)}{\sqrt{\cosh(\rho)-\cosh(u)}}d\rho du.
\]
By our assumptions on $k_{0}$ the integral above is convergent. It
will be estimated more precisely for particular choice of $k_{0}$
in the proof of Lemma \ref{lem:LH-noorm-bound}. For now note that
if $k_{0}$ is real valued then $h$ is the Fourier transform of a
real valued even function and hence real-valued. Since $L^{2}(\H)$
has a generalized eigenbasis of $C^{\infty}$ eigenfunctions of the
Laplacian, by Borel functional calculus $K$ extends from e.g. $C_{c}^{\infty}(\H)$
to a (possibly unbounded) self-adjoint operator on $L^{2}(\H)$ with
operator norm
\begin{equation}
\|K\|_{L^{2}(\H)}=\sup_{\xi\geq0}|h(\xi)|.\label{eq:operator-norm-formula}
\end{equation}

\subsection{Interior parametrix on hyperbolic plane\label{subsec:Interior-parametrix-on-plane}}

Let $\chi_{0}:\R\to[0,1]$ denote a smooth function with $\chi_{0}\lvert_{(-\infty,0]}\equiv1$
and $\chi_{0}\lvert_{[1,\infty)}=0$. Then let 
\[
\chi_{T}(t)\eqdf\chi_{0}(t-T)
\]
so $\chi_{T}$ is supported on $(-\infty,T+1]$; note for later that
all derivatives of $\chi_{T}$ are supported in $[T,T+1]$ and have
uniform bounds independent of $T$.

We define
\[
R_{\H}^{(T)}(s;r)\eqdf\chi_{T}(r)R_{\H}(s;r).
\]
Let $R_{\H}^{(T)}(s)$ denote the corresponding integral operator.
In radial coordinates the Laplacian on $\H$ is given by \cite[pg. 65]{Borthwick}
\[
-\frac{\partial^{2}}{\partial r^{2}}-\frac{1}{\tanh r}\frac{\partial}{\partial r}-\frac{1}{\sinh^{2}r}\frac{\partial^{2}}{\partial\theta^{2}}.
\]

We now perform the following calculation, writing $\Delta_{x}$ for
the Laplacian acting on coordinate $x$:
\begin{align}
\left[\Delta_{x}-s(1-s)\right]R_{\H}^{(T)}(s;r) & =\left[\Delta_{x}-s(1-s)\right]\left(\chi_{T}(r)R_{\H}(s;r)\right)\nonumber \\
 & =\left[-\frac{\partial^{2}}{\partial r^{2}}-\frac{1}{\tanh r}\frac{\partial}{\partial r},\chi_{T}\right]R_{\H}(s;r)+\delta_{r=0}\label{eq:h-para-1}
\end{align}
which is understood in a distributional sense. We further calculate
\begin{align}
\left[-\frac{\partial^{2}}{\partial r^{2}}-\frac{1}{\tanh r}\frac{\partial}{\partial r},\chi_{T}\right] & =-\frac{\partial^{2}}{\partial r^{2}}[\chi_{T}]-2\frac{\partial}{\partial r}[\chi_{T}]\frac{\partial}{\partial r}-\frac{1}{\tanh r}\frac{\partial}{\partial r}[\chi_{T}].\label{eq:h-para-2}
\end{align}
Combining (\ref{eq:h-para-1}) and (\ref{eq:h-para-2}) we expect
an identity of operators
\begin{equation}
\left[\Delta-s(1-s)\right]R_{\H}^{(T)}(s)=1+\LL_{\H}^{(T)}(s)\label{eq:interior-parametrix-identity-H}
\end{equation}
where $\LL_{\H}^{(T)}(s)$ is the integral operator with real-valued
radial kernel
\begin{equation}
\LL_{\H}^{(T)}(s;r_{0})\eqdf\left(-\frac{\partial^{2}}{\partial r^{2}}[\chi_{T}]-\frac{1}{\tanh r}\frac{\partial}{\partial r}[\chi_{T}]\right)R_{\H}(s;r_{0})-2\frac{\partial}{\partial r}[\chi_{T}]\frac{\partial R_{\H}}{\partial r}(s;r_{0}).\label{eq:remainder-kernel-def}
\end{equation}
The identity (\ref{eq:interior-parametrix-identity-H}) will be established
in Lemma \ref{lem:sobolev-on-H} below.

We note the following properties of this kernel that are straightforward
consequences of the way $\chi_{T}$ was chosen and (\ref{eq:resolvent-bounds-plane-r-deriv}),
(\ref{eq:resolvent-bounds-plane-s-deriv}).
\begin{lem}
\label{lem:LH-bounds}We have 
\begin{enumerate}
\item For $T>0$ and $s\in[\frac{1}{2},1]$, $\LL_{\H}^{(T)}(s;\bullet)$
is smooth and supported in $[T,T+1]$.
\item There is a constant $C>0$ such that for any $T>0$ and $s\in[\frac{1}{2},1]$
we have 
\[
|\LL_{\H}^{(T)}(s;r_{0})|\leq Ce^{-sr_{0}}
\]
\item \label{enu:s-deriv}For any $T>0$ there is a constant $c(T)>0$ such
that for any $r_{0}\in[T,T+1]$
\[
\left|\frac{\partial\LL_{\H}^{(T)}}{\partial s}(s_{0};r_{0})\right|\leq c(T).
\]
\end{enumerate}
\end{lem}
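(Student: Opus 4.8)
The plan is to read off all three claims directly from the explicit formula (\ref{eq:remainder-kernel-def}), feeding in the resolvent estimates (\ref{eq:resolvent-bounds-plane-r-deriv}) and (\ref{eq:resolvent-bounds-plane-s-deriv}) that have just been recorded and the fact (noted when $\chi_T$ was introduced) that $\chi_T'$ and $\chi_T''$ are supported in $[T,T+1]$ with bounds independent of $T$. Since $T$ will be taken large later there is no loss in assuming $T\geq 1$, so that $[T,T+1]\subset[1,\infty)$; on a neighbourhood of this interval the functions $r\mapsto R_{\H}(s;r)$, $r\mapsto \frac{\partial R_{\H}}{\partial r}(s;r)$ and $r\mapsto \frac{1}{\tanh r}$ are all smooth (the resolvent kernel $R_{\H}(s;r)$ is singular only at $r=0$). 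For part (1), every summand on the right of (\ref{eq:remainder-kernel-def}) carries a factor $\frac{\partial}{\partial r}[\chi_T]$ or $\frac{\partial^2}{\partial r^2}[\chi_T]$ supported in $[T,T+1]$, which gives the support statement at once; and since $\LL_{\H}^{(T)}(s;\cdot)$ is then a finite sum of products of smooth functions on a neighbourhood of that interval, it is smooth.

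For part (2), on $[T,T+1]$ we have $|\chi_T'|,|\chi_T''|\leq A$ for an absolute constant $A$, and $\frac{1}{\tanh r_0}\leq \frac{1}{\tanh 1}$. Substituting the bound $|R_{\H}(s;r_0)|,\,|\frac{\partial R_{\H}}{\partial r}(s;r_0)|\leq C_0 e^{-sr_0}$ from (\ref{eq:resolvent-bounds-plane-r-deriv}) into (\ref{eq:remainder-kernel-def}) yields $|\LL_{\H}^{(T)}(s;r_0)|\leq C e^{-sr_0}$ with $C$ depending only on $A$ and $C_0$, hence independent of $T$. For part (3), differentiate (\ref{eq:remainder-kernel-def}) in $s$: since $\chi_T$ does not depend on $s$, $\frac{\partial \LL_{\H}^{(T)}}{\partial s}(s;r_0)$ is the same linear combination of $\chi_T$-derivatives applied now to $\frac{\partial R_{\H}}{\partial s}(s;r_0)$ and $\frac{\partial^2 R_{\H}}{\partial s\partial r}(s;r_0)$. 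For $r_0\in[T,T+1]$ these two quantities are bounded by $c(T)$ via (\ref{eq:resolvent-bounds-plane-s-deriv}), and bounding the $\chi_T$-derivative coefficients as in part (2) gives $|\frac{\partial \LL_{\H}^{(T)}}{\partial s}(s;r_0)|\leq c'(T)$, which is the assertion (with $c'(T)$ renamed $c(T)$).

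There is essentially no obstacle here; the lemma is bookkeeping built on bounds already in hand. The one point deserving a word is the legitimacy of differentiating $R_{\H}(s;r_0)$ under the $t$-integral in (\ref{eq:resolvent-expression}) when computing $\frac{\partial \LL_{\H}^{(T)}}{\partial s}$, but this is exactly what was verified in the discussion preceding (\ref{eq:resolvent-bounds-plane-r-deriv})–(\ref{eq:resolvent-bounds-plane-s-deriv}), where the integrand and its $s$- and $r$-derivatives were shown to be dominated by fixed integrable functions of $t\in(0,1)$ for $(s,r)$ in compact subsets of $[\tfrac12,1]\times[1,\infty)$; so all the interchanges used above are justified.
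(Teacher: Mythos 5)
Your argument is correct and is precisely the (unwritten) argument the paper intends: the lemma is presented there as a straightforward consequence of the explicit kernel formula (\ref{eq:remainder-kernel-def}) together with the bounds (\ref{eq:resolvent-bounds-plane-r-deriv})--(\ref{eq:resolvent-bounds-plane-s-deriv}) and the support and uniform derivative bounds for $\chi_{T}$, which is exactly what you carry out. Your explicit reduction to $T\geq1$ (so that the $r=0$ singularity of $R_{\H}(s;r)$ and the factor $1/\tanh r$ cause no trouble) is a sensible and harmless sharpening of the statement, consistent with the fact that (\ref{eq:resolvent-bounds-plane-r-deriv}) is only stated for $r_{0}\geq1$ and that $T$ is later taken large.
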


We can now show the following.
\begin{lem}
\label{lem:LH-noorm-bound}There is a constant $C>0$ such that for
any $T>0$ and $s\in[\frac{1}{2},1]$ the operator $\LL_{\H}^{(T)}(s)$
extends to a bounded operator on $L^{2}(\H)$ with operator norm

\[
\|\LL_{\H}^{(T)}(s)\|_{L^{2}}\leq CTe^{\left(\frac{1}{2}-s\right)T}.
\]
\end{lem}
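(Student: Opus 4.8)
The plan is to use the Selberg/spherical transform machinery from $\S$\ref{subsec:Background-on-spectral}. Since $\LL_{\H}^{(T)}(s)$ is an integral operator with real-valued radial kernel $k_0(r) = \LL_{\H}^{(T)}(s;r)$ that is smooth and compactly supported in $[T,T+1]$, equation (\ref{eq:operator-norm-formula}) gives
\[
\|\LL_{\H}^{(T)}(s)\|_{L^{2}(\H)} = \sup_{\xi \ge 0}|h(\xi)|,
\qquad
h(\xi) = \sqrt{2}\int_{-\infty}^{\infty} e^{i\xi u}\int_{|u|}^{\infty}\frac{k_0(\rho)\sinh\rho}{\sqrt{\cosh\rho - \cosh u}}\,d\rho\,du.
\]
So it suffices to bound $|h(\xi)|$ uniformly in $\xi \ge 0$. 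First I would bound $|h(\xi)|$ by its value at $\xi = 0$, up to harmless factors: since $|e^{i\xi u}| = 1$, we trivially get $|h(\xi)| \le \sqrt{2}\int_{-\infty}^{\infty}\int_{|u|}^{\infty}\frac{|k_0(\rho)|\sinh\rho}{\sqrt{\cosh\rho-\cosh u}}\,d\rho\,du$, which reduces everything to estimating this double integral with absolute values inside.

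Next I would plug in the pointwise bound on the kernel from Lemma \ref{lem:LH-bounds}(2), namely $|k_0(\rho)| \le C e^{-s\rho}$, together with the support condition $\rho \in [T,T+1]$. The $u$-integration ranges over $|u| \le \rho \le T+1$, so after swapping the order of integration (Fubini, everything positive) we get
\[
|h(\xi)| \le \sqrt{2}\,C \int_{T}^{T+1} e^{-s\rho}\sinh\rho \left(\int_{-\rho}^{\rho}\frac{du}{\sqrt{\cosh\rho - \cosh u}}\right)d\rho.
\]
The inner integral $\int_{-\rho}^{\rho}\frac{du}{\sqrt{\cosh\rho-\cosh u}}$ is the standard one appearing in the inversion of the Abel/spherical transform; it equals $\pi$ (or is at worst bounded by an absolute constant — I would cite this as the classical identity underlying the Selberg transform, e.g. via the substitution reducing it to $\int_0^1 dt/\sqrt{1-t^2}$ after writing $\cosh u = 1 + (\cosh\rho - 1)t^2$ or similar). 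Hence $|h(\xi)| \le C' \int_T^{T+1} e^{-s\rho}\sinh\rho\,d\rho$. Since $\sinh\rho \le \frac12 e^\rho$ and $\rho \in [T,T+1]$, the integrand is $\le \frac12 e^{(1-s)\rho} \le \frac12 e^{(1-s)(T+1)}$; wait — this gives $e^{(1-s)T}$, not $e^{(\frac12-s)T}$, so I need to be more careful.

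The gain of the extra $e^{-T/2}$ must come from a more careful treatment near $u = \pm\rho$, or rather from the fact that the relevant estimate is not on $h(0)$ but uses the oscillation, OR from a sharper pointwise bound. Here is the fix: the correct reading of Lemma \ref{lem:LH-bounds}(2) combined with the precise resolvent asymptotics (\ref{eq:resolvent-bounds-plane-r-deriv}) gives $|k_0(\rho)| \le C e^{-s\rho}$, and I should bound $\sinh\rho \sim \frac12 e^\rho$ only after extracting that the effective decay is $e^{(1-s)\rho}$; but actually the sharp $L^2$ bound for a radial kernel supported near radius $T$ is governed by $\|k_0\|_{L^1(\sinh\rho\,d\rho)}^{1/2}\cdot(\text{something})$ — more precisely, the operator norm of such a kernel obeys $\sup_\xi|h(\xi)| \ll e^{T/2}\int_T^{T+1}|k_0(\rho)|\,d\rho$ by the Cauchy–Schwarz / Plancherel form of the spherical transform (the factor $e^{T/2}$ rather than $e^T$ because the spherical function $\varphi_\xi(\rho)$ decays like $e^{-\rho/2}$, beating the $e^\rho$ volume growth down to $e^{\rho/2}$). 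Concretely: $h(\xi) = \int_0^\infty k_0(\rho)\varphi_\xi(\rho)\,4\pi\sinh^2(\rho/2)\,d\rho$ with $|\varphi_\xi(\rho)| \le C(1+\rho)e^{-\rho/2}$ uniformly in $\xi \ge 0$, so $|h(\xi)| \le C\int_T^{T+1}|k_0(\rho)|(1+\rho)e^{-\rho/2}\cdot e^\rho\,d\rho \le C'(1+T)e^{T/2}\int_T^{T+1}|k_0(\rho)|\,d\rho \le C''(1+T)e^{T/2}\cdot e^{-sT} = C''(1+T)e^{(\frac12-s)T}$, absorbing the $(1+T)$ factor and renaming constants to get the stated $CTe^{(\frac12-s)T}$ (for $T$ bounded below; for small $T$ the bound is trivial anyway).

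\textbf{Main obstacle.} The crux is obtaining the uniform-in-$\xi$ bound $|\varphi_\xi(\rho)| \ll (1+\rho)e^{-\rho/2}$ on the spherical function (equivalently, the uniform decay of the spherical transform kernel), which is what produces the decisive $e^{-T/2}$ gain and hence the $(\frac12 - s)$ in the exponent rather than $(1-s)$. This is a classical fact about $\mathrm{PSL}_2(\R)$ spherical functions (Harish-Chandra estimates, or elementary for rank one), but it must be invoked correctly — one cannot get away with the naive $|h(0)|$ bound, which only yields the weaker $e^{(1-s)T}$. Everything else (Fubini, the elementary integral identity, plugging in Lemma \ref{lem:LH-bounds}(2), renaming constants) is routine bookkeeping.
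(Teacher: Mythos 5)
Your final argument is correct and gives the stated bound, but it reaches the decisive $e^{T/2}$ gain by a different route from the paper, and your reason for abandoning the first route is mistaken. The paper does precisely the ``naive'' thing you claim cannot work: starting from (\ref{eq:operator-norm-formula}) it bounds $\sup_{\xi\ge0}|h(\xi)|$ by the double integral with absolute values, inserts $|\LL_{\H}^{(T)}(s;\rho)|\le Ce^{-s\rho}$ and the support condition $\rho\in[\max(|u|,T),T+1]$, $|u|\le T+1$ from Lemma \ref{lem:LH-bounds}, and then evaluates the inner $\rho$-integral exactly (the substitution $y=\cosh\rho$ gives $\int\frac{\sinh\rho\,d\rho}{\sqrt{\cosh\rho-\cosh u}}=2\sqrt{\cosh\rho-\cosh u}$), which is $O(e^{T/2})$ uniformly in $u$; integrating over the $u$-range of length $O(T)$ then yields $CTe^{(\frac{1}{2}-s)T}$ with no spherical-function input. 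Your intermediate claim that $\int_{-\rho}^{\rho}\frac{du}{\sqrt{\cosh\rho-\cosh u}}$ is of constant order (so that the absolute-value bound could only give $e^{(1-s)T}$) is false: by the Mehler--Dirichlet formula this integral equals $\sqrt{2}\,\pi\,\varphi_{0}(\rho)\asymp(1+\rho)e^{-\rho/2}$, so taking absolute values already encodes exactly the decay $|\varphi_{\xi}(\rho)|\le\varphi_{0}(\rho)\lesssim(1+\rho)e^{-\rho/2}$ that you later invoke. Your replacement argument --- writing $h(\xi)$ as the spherical transform of the kernel and using the classical uniform Harish-Chandra estimate for real $\xi\ge0$ --- is a legitimate, citable alternative that gives the same conclusion; what the paper's version buys is that it is elementary and self-contained, proving the needed decay for this specific kernel by direct computation rather than importing spherical-function asymptotics. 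Two small corrections to your write-up: the radial measure in the spherical transform is $2\pi\sinh\rho\,d\rho$ (the derivative of the ball area $4\pi\sinh^{2}(\rho/2)$), not $4\pi\sinh^{2}(\rho/2)\,d\rho$ --- harmless on $[T,T+1]$ but wrong as an identity; and the parenthetical that small $T$ is ``trivial'' is not accurate (your estimate gives $O(1+T)$, not $O(T)$, there), though the point is moot since the kernel bound of Lemma \ref{lem:LH-bounds} rests on (\ref{eq:resolvent-bounds-plane-r-deriv}), stated for $r_{0}\ge1$, so $T$ is implicitly bounded below here exactly as in the paper.
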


\begin{proof}
We do this using (\ref{eq:operator-norm-formula}) which tells us
\begin{align*}
\|\LL_{\H}^{(T)}(s)\|_{L^{2}} & =\sup_{\xi\geq0}\left|\sqrt{2}\int_{-\infty}^{\infty}e^{i\xi u}\int_{|u|}^{\infty}\frac{\mathbb{L}_{\H}^{(T)}(s;\rho)\sinh(\rho)}{\sqrt{\cosh(\rho)-\cosh(u)}}d\rho du\right|\\
 & \leq\sqrt{2}\int_{-\infty}^{\infty}\int_{|u|}^{\infty}\frac{|\mathbb{L}_{\H}^{(T)}(s;\rho)|\sinh(\rho)}{\sqrt{\cosh(\rho)-\cosh(u)}}d\rho du\\
 & \leq2\sqrt{2}C\int_{0}^{T+1}\int_{\max(|u|,T)}^{T+1}\frac{e^{-s\rho}\sinh(\rho)}{\sqrt{\cosh(\rho)-\cosh(u)}}d\rho du\\
 & \leq C'e^{-sT}\int_{0}^{T+1}\int_{\max(|u|,T)}^{T+1}\frac{\sinh(\rho)}{\sqrt{\cosh(\rho)-\cosh(u)}}d\rho du\\
 & =C'e^{-sT}\int_{0}^{T+1}\int_{\cosh\max(|u|,T)}^{\cosh(T+1)}\frac{dy}{\sqrt{y-\cosh(u)}}du\\
 & =C''e^{-sT}\int_{0}^{T+1}\Big[\sqrt{\cosh(T+1)-\cosh u}\\
 & \quad\quad\quad\quad\quad\quad-\sqrt{\cosh\max(|u|,T)-\cosh|u|}\Big]du\\
 & \leq C'''Te^{\left(\frac{1}{2}-s\right)T}
\end{align*}
 where the third inequality used Lemma \ref{lem:LH-bounds}.
\end{proof}
The implication of this is that for any $s_{0}>\frac{1}{2}$ we can
choose $T=T(s_{0})$ such that for all $s\in[s_{0},1]$ we have 
\[
\|\LL_{\H}^{(T)}(s)\|_{L^{2}}\leq\frac{1}{5}.
\]
We will do this later. The following lemma shows that smoothly cutting
off $R_{\H}(s)$ at radius $T$ does not significantly affect its
mapping properties.
\begin{lem}
\label{lem:sobolev-on-H}For any $T>0$ and $s\in[\frac{1}{2},1]$,
for any compact $K\subset\H$, there is $C=C(s,K,T)>0$ such that:
\begin{enumerate}
\item \label{enu:L^2} For any $f\in C_{c}^{\infty}(\H)$ with $\mathrm{supp}(f)\subset K$
we have $R_{\H}^{(T)}(s)f\in H^{2}(\H)$ and 
\[
\|R_{\H}^{(T)}(s)f\|_{H^{2}}\leq C(s,K,T)\|f\|_{L^{2}}.
\]
\item \label{enu:functional-identity}Furthermore, with $f$ as above
\[
(\Delta-s(1-s))R_{\H}^{(T)}(s)[f]=f+\LL_{\H}^{(T)}(s)[f]
\]
in the sense of equivalence of $L^{2}$ functions.
\end{enumerate}
\end{lem}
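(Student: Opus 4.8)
The plan is to prove Lemma \ref{lem:sobolev-on-H} by reducing everything to the behaviour of the model resolvent $R_{\H}(s)$ on $L^{2}(\H)$, which we already control since $\spec(\Delta_{\H})\subset[\tfrac14,\infty)$ and hence $R_{\H}(s)$ is a bounded operator $L^{2}(\H)\to L^{2}(\H)$ with $(\Delta-s(1-s))R_{\H}(s)=\mathrm{Id}$, and moreover $R_{\H}(s)$ maps $L^{2}(\H)$ into $H^{2}(\H)$ (this is the elliptic regularity for $\Delta-s(1-s)$, or can be read off from $\Delta R_{\H}(s)f=f+s(1-s)R_{\H}(s)f\in L^{2}$). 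The point of the cutoff is precisely that $R_{\H}^{(T)}(s)=R_{\H}(s)+(R_{\H}^{(T)}(s)-R_{\H}(s))$, where the difference is the integral operator with radial kernel $(\chi_{T}(r)-1)R_{\H}(s;r)$, which is supported in $r\geq T$ and, by \eqref{eq:resolvent-bounds-plane-r-deriv}, decays like $e^{-sr}$; but for boundedness on $L^2$ of operators applied to functions supported in a fixed compact $K$ we only need the kernel to be locally controlled, which it is away from the diagonal and where the diagonal is untouched by the cutoff.

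Concretely, for part \ref{enu:L^2}, I would first note that since $f$ is supported in $K$ and the kernel $(1-\chi_T(r))R_{\H}(s;r)$ is smooth and bounded on the relevant region (it vanishes near $r=0$ where the only singularity of $R_{\H}(s;r)$ sits, being a log-type singularity in dimension $2$), the operator $R_{\H}(s)-R_{\H}^{(T)}(s)$ maps $f$ to a smooth function, all of whose derivatives are controlled by $\|f\|_{L^2}$ on any compact set; combined with the exponential decay from \eqref{eq:resolvent-bounds-plane-r-deriv} one gets that $R_{\H}(s)f-R_{\H}^{(T)}(s)f\in C^\infty(\H)\cap H^2(\H)$ with norm $\lesssim_{s,K,T}\|f\|_{L^2}$. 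Since $R_{\H}(s)f\in H^2(\H)$ with $\|R_{\H}(s)f\|_{H^2}\lesssim_s\|f\|_{L^2}$ by the resolvent bound, adding the two gives $R_{\H}^{(T)}(s)f\in H^2(\H)$ with the claimed estimate. Alternatively, and perhaps more cleanly, one avoids ever invoking $R_{\H}(s)$ by directly estimating $R_{\H}^{(T)}(s)f$: its $L^2$-norm via Schur's test using $\int|R_{\H}^{(T)}(s;r)|\,d\H<\infty$ (finite by \eqref{eq:resolvent-bounds-plane-r-deriv} and compact support), and $\|\Delta R_{\H}^{(T)}(s)f\|_{L^2}$ via \eqref{eq:interior-parametrix-identity-H}, i.e. $\Delta R_{\H}^{(T)}(s)f=s(1-s)R_{\H}^{(T)}(s)f+f+\LL_{\H}^{(T)}(s)f$, each term being $L^2$-bounded in $f$ by Schur's test and Lemma \ref{lem:LH-bounds}(ii).

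For part \ref{enu:functional-identity}, the identity is essentially the formal computation already carried out in \eqref{eq:h-para-1}--\eqref{eq:remainder-kernel-def}; the work is to justify it rigorously. Here the natural route is: for $f\in C_c^\infty(\H)$, work distributionally. The kernel $R_{\H}(s;x,y)$ is the Schwartz kernel of the genuine resolvent, so $(\Delta_x-s(1-s))R_{\H}(s;x,y)=\delta(x-y)$ in $\mathcal{D}'$. Writing $R_{\H}^{(T)}(s;x,y)=\chi_T(d(x,y))R_{\H}(s;x,y)$ and applying the product/Leibniz rule for $\Delta_x$ acting on a product of a smooth function (the cutoff, which equals $1$ near the diagonal so no interaction with the delta) and a distribution, one gets $(\Delta_x-s(1-s))R_{\H}^{(T)}(s;x,y)=\delta(x-y)+\LL_{\H}^{(T)}(s;x,y)$ where $\LL_{\H}^{(T)}$ is exactly the kernel \eqref{eq:remainder-kernel-def}; convolving against $f$ and using that all kernels involved produce $L^2$ functions (by part \ref{enu:L^2} and Lemma \ref{lem:LH-bounds}) upgrades the distributional identity to an identity of $L^2$ functions.

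The main obstacle I anticipate is purely technical bookkeeping around the diagonal singularity and the justification of differentiating under the integral sign / moving $\Delta$ inside the integral operator — i.e. making the formal manipulation \eqref{eq:h-para-1} rigorous when $R_{\H}(s;r)$ has a (mild, logarithmic) singularity at $r=0$. The saving grace is that $\chi_T\equiv 1$ on a neighbourhood of $r=0$, so the cutoff and all its derivatives vanish there; consequently the commutator $[\Delta,\chi_T]$ is supported in $[T,T+1]$, a region where $R_{\H}(s;r)$ is perfectly smooth, and no singular analysis is needed for the error term at all. The only place the singularity matters is in recognizing that $R_{\H}^{(T)}(s)$ still inverts $\Delta-s(1-s)$ up to the smooth error, and this is inherited directly from the corresponding fact for $R_{\H}(s)$, which is standard (e.g. \cite[Prop.~4.2, Thm.~4.3]{Borthwick}).
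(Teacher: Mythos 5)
Your primary route for part \ref{enu:L^2} has a genuine gap at the endpoint $s=\tfrac12$, which the lemma includes. You decompose $R_{\H}^{(T)}(s)=R_{\H}(s)+\bigl(R_{\H}^{(T)}(s)-R_{\H}(s)\bigr)$ and use that $R_{\H}(s)$ is bounded $L^{2}(\H)\to H^{2}(\H)$; but $s(1-s)=\tfrac14$ lies at the bottom of $\spec(\Delta_{\H})=[\tfrac14,\infty)$ when $s=\tfrac12$, so there is no bounded resolvent there at all. The tail term fails for the same reason: its kernel is supported in $r\geq T$ and only decays like $e^{-sr}$ by \eqref{eq:resolvent-bounds-plane-r-deriv}, while the hyperbolic area element grows like $e^{r}$, so for $f$ supported in $K$ one only gets $|(R_{\H}(s)-R_{\H}^{(T)}(s))f(x)|\lesssim e^{-s\,d(x,K)}\|f\|_{L^{2}}$, and $e^{-d(x,o)/2}\notin L^{2}(\H)$. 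This is precisely why the paper never uses $R_{\H}(s)$ as an operator, only its kernel: the truncated kernel $R_{\H}^{(T)}(s;x,y)$ is supported in $d(x,y)\leq T+1$, so for $f$ supported in $K$ everything lives on a compact set and a Hilbert--Schmidt (or Schur) estimate with the square-integrable logarithmic singularity \eqref{eq:diagonal-asymp} gives the $L^{2}$ bound uniformly down to $s=\tfrac12$. Your alternative route (estimate $R_{\H}^{(T)}(s)f$ directly, then get the $\Delta$-bound from the identity of part \ref{enu:functional-identity} together with Lemma \ref{lem:LH-noorm-bound}) is exactly the paper's logical structure and is the one you should keep.

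For part \ref{enu:functional-identity}, your distributional sketch is the right idea but glosses the two points where the paper does real work. First, the justification ``$R_{\H}(s;x,y)$ is the Schwartz kernel of the genuine resolvent, hence $(\Delta_{x}-s(1-s))R_{\H}(s;x,y)=\delta_{y}$'' is unavailable at $s=\tfrac12$ for the reason above; one must instead verify the fundamental-solution property locally from the diagonal asymptotics \eqref{eq:diagonal-asymp} and $\partial_{r}R_{\H}(s;r)=-\tfrac{1}{2\pi r}+O(1)$, which is what the paper's $\epsilon\to0$ integration by parts in polar coordinates does (the cutoff indeed never interferes, as you note). Second, ``upgrading'' the identity is not just a matter of all terms being $L^{2}$: the conclusion $R_{\H}^{(T)}(s)f\in H^{2}(\H)$ means membership in the completion of $C_{c}^{\infty}(\H)$ in the graph norm, i.e.\ in the domain of the self-adjoint extension, and passing from ``distributional Laplacian in $L^{2}$'' to this requires the essential self-adjointness of $\Delta_{\H}$ on $C_{c}^{\infty}(\H)$ (the paper invokes this via testing against $g\in H^{2}(\H)$ and citing Strichartz). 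With those two repairs your argument becomes essentially the paper's proof.
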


\begin{proof}
Suppose that compact $K$ is given and $f\in C_{c}^{\infty}(\H)$
with $\mathrm{supp}(f)\subset K$. For $y\in K$ we have $R_{\H}^{(T)}(s;x,y)=0$
unless 
\[
x\in K'(T,K)\eqdf\{\,x\,:\,d(x,K)\leq T+1\,\}
\]
with $K'$ compact. Therefore using the usual Hilbert-Schmidt inequality
we obtain
\begin{align}
 & \|R_{\H}^{(T)}(s)[f]\|_{L^{2}(\H)}^{2}\nonumber \\
= & \int_{x\in K'}\left|\int_{y\in K}R_{\H}^{(T)}(s;x,y)f(y)d\H(y)\right|^{2}d\H(x)\nonumber \\
\leq & \int_{x\in K'}\left(\int_{y\in K}R_{\H}^{(T)}(s;x,y)^{2}d\H(y)\right)\left(\int_{y\in K}|f(y)|^{2}d\H(y)\right)d\H(x).\label{eq:hs-ineq}
\end{align}
Recall that we write $r=d_{\H}(x,y)$, hence the inner integral can
be written in polar coordinates as
\begin{align}
\int_{y\in K}R_{\H}^{(T)}(s;x,y)^{2}d\H(y) & =\int_{0}^{2\pi}\int_{0}^{\infty}R_{\H}^{(T)}(s;r)^{2}\sinh r\,dr\,d\theta\nonumber \\
 & \leq2\pi\int_{0}^{M}R_{\H}^{(T)}(s;r)^{2}\sinh r\,dr\label{eq:l2bound2-1}
\end{align}
for $M=M(K,T)$. Because $\chi_{T}\equiv1$ near $0$, the type of
singularity that $R_{\H,n}^{(T)}(s;r)$ has at $r=0$ is exactly the
same as the type of singularity of $R_{\H}(s;r)$ near $r=0$; namely
by \cite[(4.2)]{Borthwick}
\begin{equation}
R_{\H}^{(T)}(s;r)=-\frac{1}{4\pi}\log\left(\frac{r}{2}\right)+O(1)\label{eq:diagonal-asymp}
\end{equation}
as $r\to0$. The function $R_{\H}^{(T)}(s;r)$ is smooth away from
0. Hence, since $\left(\log\left(\frac{r}{2}\right)\right)^{2}\sinh r\to0$
as $r\to0$, $R_{\H}^{(T)}(s;r)$ is in particular square integrable
on $\left[0,M\right]$. This gives from (\ref{eq:hs-ineq})
\begin{equation}
\|R_{\H}^{(T)}(s)[f]\|_{L^{2}(\H)}^{2}\leq\int_{x\in K'}C(s,K,T)\|f\|_{L^{2}(\H)}^{2}d\H(x)\leq C'(s,K,T)\|f\|_{L^{2}(\H)}^{2}.\label{eq:l2-bound}
\end{equation}

We now aim for a bound on $\|\Delta R_{\H}^{(T)}(s)[f]\|_{L^{2}(\H)}^{2}$
so as to prove $R_{\H}^{(T)}(s)[f]\in H^{2}(\H)$ and bound its $H^{2}$-norm.

Let $g\in C_{c}^{\infty}(\H)$ be a test function and $f\in C_{c}^{\infty}(\H)$
with support as above. Consider
\begin{align*}
\langle R_{\H}^{(T)}(s)[f],\Delta g\rangle & =\int_{x\in\H}\left(\int_{y\in\H}R_{\H}^{(T)}(s;x,y)f(y)d\H(y)\right)\Delta\overline{g}(x)d\H(x).
\end{align*}
Because $f$ and $g$ are compactly supported and the singularity
of $R_{\H}^{(T)}(x,y)$ is locally $L^{1}$ using (\ref{eq:diagonal-asymp}),
we can use Fubini to get
\begin{equation}
\langle R_{\H}^{(T)}(s)[f],\Delta g\rangle=\int_{y\in\H}\left(\int_{x\in\H}R_{\H}^{(T)}(s;x,y)\Delta\overline{g}(x)d\H(x)\right)f(y)d\H(y).\label{eq:start}
\end{equation}
We use hyperbolic polar coordinates for the inner integral, writing
$r=d(x,y)$ and $\theta$ for polar angle, $G_{y}(r,\theta)\eqdf\bar{g}(x)$,
and the inner integral is understood as an improper integral as follows:
\begin{align}
 & \int_{x\in\H}R_{\H}^{(T)}(s;x,y)\Delta\overline{g}(x)d\H(x)\nonumber \\
= & -\lim_{\epsilon\to0}\int_{\epsilon}^{\infty}\int_{0}^{2\pi}R_{\H}^{(T)}(s;\tilde{r})\left(\frac{\partial}{\partial r}\left(\sinh r\frac{\partial G_{y}}{\partial r}\right)(\tilde{r},\tilde{\theta})+\frac{1}{\left(\sinh r\right)^{2}}\frac{\partial^{2}G_{y}}{\partial\theta^{2}}(\tilde{r},\tilde{\theta})\right)d\tilde{\theta}d\tilde{r}\nonumber \\
= & -\lim_{\epsilon\to0}\int_{\epsilon}^{\infty}\int_{0}^{2\pi}R_{\H}^{(T)}(s;\tilde{r})\left(\frac{\partial}{\partial r}\left(\sinh r\frac{\partial G_{y}}{\partial r}\right)(\tilde{r},\tilde{\theta})\right)d\tilde{\theta}d\tilde{r}\nonumber \\
= & -\lim_{\epsilon\to0}\int_{0}^{2\pi}\int_{\epsilon}^{\infty}R_{\H}^{(T)}(s;\tilde{r})\left(\frac{\partial}{\partial r}\left(\sinh r\frac{\partial G_{y}}{\partial r}\right)(\tilde{r},\tilde{\theta})\right)d\tilde{r}d\tilde{\theta}\nonumber \\
= & \lim_{\epsilon\to0}R_{\H}^{(T)}(s;\epsilon)\sinh\epsilon\int_{0}^{2\pi}\frac{\partial G_{y}}{\partial r}(\epsilon,\tilde{\theta})d\tilde{\theta}\nonumber \\
 & \quad+\lim_{\epsilon\to0}\int_{0}^{2\pi}\int_{\epsilon}^{\infty}\frac{\partial R_{\H}^{(T)}}{\partial r}(s;\tilde{r})\sinh\tilde{r}\frac{\partial G_{y}}{\partial r}(\tilde{r},\tilde{\theta})d\tilde{r}d\tilde{\theta}\nonumber \\
= & \lim_{\epsilon\to0}\int_{0}^{2\pi}\int_{\epsilon}^{\infty}\frac{\partial R_{\H}^{(T)}}{\partial r}(s;\tilde{r})\sinh\tilde{r}\frac{\partial G_{y}}{\partial r}(\tilde{r},\tilde{\theta})d\tilde{r}d\tilde{\theta}\label{eq:H2-temp1}
\end{align}
where the last equality used (\ref{eq:diagonal-asymp}) with smoothness
of $G_{y}$. Now a similar calculation gives
\begin{align}
 & \lim_{\epsilon\to0}\int_{0}^{2\pi}\int_{\epsilon}^{\infty}\frac{\partial R_{\H}^{(T)}}{\partial r}(s;\tilde{r})\sinh r\frac{\partial G_{y}}{\partial r}(\tilde{r},\tilde{\theta})d\tilde{r}d\tilde{\theta}\nonumber \\
= & -\lim_{\epsilon\to0}\frac{\partial R_{\H}^{(T)}}{\partial r}(s;\epsilon)\sinh\epsilon\int_{0}^{2\pi}G_{y}(\epsilon,\tilde{\theta})d\tilde{\theta}\nonumber \\
 & -\lim_{\epsilon\to0}\int_{0}^{2\pi}\int_{\epsilon}^{\infty}\frac{\partial}{\partial r}\left(\sinh r\frac{\partial R_{\H}^{(T)}}{\partial r}\right)(s;\tilde{r})G_{y}(\tilde{r},\tilde{\theta})d\tilde{r}d\tilde{\theta}\nonumber \\
= & \bar{g}(y)-\lim_{\epsilon\to0}\int_{0}^{2\pi}\int_{\epsilon}^{\infty}\frac{\partial}{\partial r}\left(\sinh r\frac{\partial R_{\H}^{(T)}}{\partial r}\right)(s;\tilde{r})G_{y}(\tilde{r},\tilde{\theta})d\tilde{r}d\tilde{\theta}.\label{eq:H2-temp2}
\end{align}

The second equality used \cite[pg. 66]{Borthwick}
\[
\frac{\partial R_{\H}^{(T)}}{\partial r}(s;\epsilon)=-\frac{1}{2\pi\epsilon}+O(1)
\]
as $\epsilon\to0$ with $\lim_{\epsilon\to0}G_{y}(\epsilon,\tilde{\theta})=\bar{g}(y)$.
Now we note 
\[
-\frac{1}{\sinh r}\frac{\partial}{\partial r}\left(\sinh r\frac{\partial R_{\H}^{(T)}}{\partial r}\right)(s;\tilde{r})=\Delta R_{\H}^{(T)}(s;\tilde{r})
\]
and so using (\ref{eq:h-para-1}), (\ref{eq:h-para-2}) and (\ref{eq:remainder-kernel-def})
we get, for $\tilde{r}>0$,
\[
-\frac{1}{\sinh r}\frac{\partial}{\partial r}\left(\sinh r\frac{\partial R_{\H}^{(T)}}{\partial r}\right)(s;\tilde{r})=s(1-s)R_{\H}^{(T)}(s;\tilde{r})+\LL_{\H}^{(T)}(s;\tilde{r}).
\]
Therefore 
\begin{align}
 & -\lim_{\epsilon\to0}\int_{0}^{2\pi}\int_{\epsilon}^{\infty}\frac{\partial}{\partial r}\left(\sinh r\frac{\partial R_{\H}^{(T)}}{\partial r}\right)(s;\tilde{r})G_{y}(\tilde{r},\tilde{\theta})d\tilde{r}d\tilde{\theta}\nonumber \\
= & \lim_{\epsilon\to0}\int_{d(x,y)>\epsilon}\left(s(1-s)R_{\H}^{(T)}(s;\tilde{r})+\LL_{\H}^{(T)}(s;d(x,y))\right)\bar{g}(x)d\H(x)\nonumber \\
= & \int_{x\in\mathbb{H}}\left(s(1-s)R_{\H}^{(T)}(s;d(x,y))+\LL_{\H}^{(T)}(s;d(x,y))\right)\bar{g}(x)d\H(x)\label{eq:H2temp3}
\end{align}
and this last integral is easily seen to converge by working in polar
coordinates centered at $y$ and using $g\in C_{c}^{\infty}(\H)$
and (\ref{eq:diagonal-asymp}).

Now combining (\ref{eq:H2-temp1}), (\ref{eq:H2-temp2}), and (\ref{eq:H2temp3})
gives, for (\ref{eq:start}),
\begin{align*}
 & \langle R_{\H}^{(T)}(s)[f],\Delta g\rangle\\
 & =\int_{y\in\H}f(y)\bar{g}(y)d\H(y)\\
 & \,\,\,\,+\int_{y\in\mathbb{H}}\left(\int_{x\in\mathbb{H}}\left(s(1-s)R_{\H}^{(T)}(s;d(x,y))+\LL_{\H}^{(T)}(s;d(x,y))\right)\bar{g}(x)d\H(x)\right)f(y)d\H(y)\\
 & =\langle f,g\rangle+\langle s(1-s)R_{\H}^{(T)}(s)[f],g\rangle+\langle\LL_{\H}^{(T)}(s)[f],g\rangle.
\end{align*}
Note that by (\ref{eq:l2-bound}) and Lemma \ref{lem:LH-noorm-bound}
all functions above are in $L^{2}(\H)$. This identity now clearly
extends to any $g\in H^{2}(\H)$ and now self-adjointness of $\Delta_{\H}$
on $H^{2}(\H)$ (see \cite{Strichartz}) gives that $R_{\H}^{(T)}(s)[f]\in H^{2}(\H)$
and moreover 
\[
(\Delta-s(1-s))R_{\H}^{(T)}(s)[f]=f+\LL_{\H}^{(T)}(s)[f]
\]
in the sense of elements of $L^{2}(\H)$. \emph{This proves part 2
of the lemma.}

We now rewrite this identity as 
\[
\Delta R_{\H}^{(T)}(s)[f]=f+s(1-s)R_{\H}^{(T)}(s)[f]+\LL_{\H}^{(T)}(s)[f]
\]
and using Lemma \ref{lem:LH-noorm-bound} and (\ref{eq:l2-bound})
now gives
\[
\|\Delta R_{\H}^{(T)}(s)[f]\|_{L^{2}(\H)}\leq c(s,K,T)\|f\|_{L^{2}(\H)}.
\]
Combining this with (\ref{eq:l2-bound}), \emph{this proves part 1
of the lemma.}
\end{proof}

\subsection{Automorphic kernels}

At two points in the rest of the paper we use the following geometric
lemma. Recall that $F$ is a Dirichlet fundamental domain for $\Gamma$
in $\H$.
\begin{lem}
\label{lem:geometric}For any compact set $K\subset\bar{F}$, and
$T>0$, there is another compact set $K'=K'(K,T)\subset\bar{F}$ and
a \uline{finite} set $S=S(K,T)\subset\Gamma$ such that for all
$x,y\in\H$ with 
\[
x\in\bar{F},\,y\in\bigcup_{\gamma\in\Gamma}\gamma(K),\quad d(x,y)\leq T,
\]
we have 
\[
x\in K',\quad y\in\bigcup_{\gamma\in S}\gamma(K).
\]
\end{lem}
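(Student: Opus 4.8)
The plan is to exploit that a Dirichlet fundamental domain $F$ is locally finite: only finitely many $\Gamma$-translates of $F$ can meet any fixed compact set. First I would fix a compact $K \subset \bar F$ and set $K_1 \eqdf \{\,x \in \H : d(x, K) \le T\,\}$, which is compact in $\H$. I claim the set $S \eqdf \{\,\gamma \in \Gamma : \gamma(K) \cap K_1 \ne \emptyset\,\}$ is finite. Indeed $\gamma(K) \cap K_1 \ne \emptyset$ forces $\gamma(\bar F) \cap K_1' \ne \emptyset$ for the slightly larger compact set $K_1' \eqdf \{\,x : d(x, K_1) \le \mathrm{diam}(K)\,\}$ (since $K \subset \bar F$), and by proper discontinuity of the $\Gamma$-action together with local finiteness of the Dirichlet tessellation $\{\gamma \bar F\}_{\gamma \in \Gamma}$, only finitely many $\gamma$ satisfy this.

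Next I would verify the conclusion with this $S$. Suppose $x \in \bar F$, $y \in \bigcup_{\gamma \in \Gamma} \gamma(K)$, and $d(x,y) \le T$. Write $y = \gamma(k)$ with $k \in K$ and $\gamma \in \Gamma$. Then $d(x, \gamma k) \le T$ with $x \in \bar F$, and applying $\gamma^{-1}$: $d(\gamma^{-1} x, k) \le T$, so $\gamma^{-1} x \in K_1$. Hence $\gamma^{-1} x \in \gamma^{-1}(\bar F) \cap K_1$, so $\gamma^{-1}(\bar F)$ meets $K_1$; in particular $\gamma^{-1}(K)$... — actually it is cleaner to note directly that $y = \gamma(k) \in \gamma(K)$ and $d(x,y)\le T$ with $x \in \bar F \subset K_1$ (assuming, as we may by enlarging $K_1$, that $\bar F$ is replaced by the relevant bounded portion; in fact we only need $x$ within distance $T$ of $y$ and $y$ within $K$-translates, so $\gamma(K)$ meets the ball of radius $T$ about $x$). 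Since $x \in \bar F$ and $d(x, \gamma k)\le T$ we get $\gamma k \in K_1$, i.e. $\gamma(K) \cap K_1 \ne \emptyset$, so $\gamma \in S$. Therefore $y \in \bigcup_{\gamma \in S}\gamma(K)$, as desired. For the $x$-side, set $K' \eqdf \bar F \cap \bigcup_{\gamma \in S}\{\,x : d(x,\gamma(K)) \le T\,\}$, which is a closed subset of $\bar F$ contained in the compact set $\bigcup_{\gamma \in S}\{\,x : d(x, \gamma(K))\le T\,\}$, hence compact; and the argument above shows $x \in K'$.

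The main obstacle — really the only nontrivial point — is establishing the finiteness of $S$, i.e. that a Dirichlet fundamental domain is locally finite. This is a standard fact (see e.g. \cite[\S 9.4]{BeardonBook} or any text on Fuchsian groups): it follows because $\Gamma$ acts properly discontinuously on $\H$, so for the compact set $K_1'$ only finitely many $\gamma \in \Gamma$ satisfy $\gamma(K_1') \cap K_1' \ne \emptyset$, and combined with $K \subset \bar F$ and the fact that $\bar F$ is contained in a single "copy" this bounds the relevant $\gamma$'s. Once local finiteness is in hand, everything else is elementary manipulation of the triangle inequality and the observation that closed subsets of compact sets are compact; no estimates on the resolvent kernel or anything analytic is needed here. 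I would also remark that the specific choice of a \emph{Dirichlet} domain (rather than an arbitrary fundamental domain) is exactly what guarantees $\bar F$ behaves well under this argument, which is presumably why the authors flagged this choice earlier.
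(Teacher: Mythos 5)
There is a genuine gap in the verification step, i.e.\ in showing that every admissible pair $(x,y)$ has $y\in\bigcup_{\gamma\in S}\gamma(K)$ for \emph{your} choice of $S$. You set $K_{1}=\{z:d(z,K)\leq T\}$ and $S=\{\gamma:\gamma(K)\cap K_{1}\neq\emptyset\}$; finiteness of this $S$ is indeed easy (proper discontinuity applied to the compact sets $K$ and $K_{1}$ suffices). But to place $\gamma$ in this $S$ you need $d(\gamma k,K)\leq T$, whereas the hypothesis only gives $d(\gamma k,x)\leq T$ with $x\in\bar{F}$. Your patch ``$x\in\bar{F}\subset K_{1}$, enlarging $K_{1}$ to the relevant bounded portion'' is not available: since $X$ is a non-compact finite-area surface, $\bar{F}$ is unbounded (it runs up the cusps), so no compact enlargement of $K_{1}$ contains $\bar{F}$, and restricting to ``the relevant bounded portion'' of $\bar{F}$ is circular --- identifying that bounded portion is exactly the compact set $K'$ whose existence is part of the lemma, and its compactness is only derived \emph{after} one knows $S$ is finite. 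Moreover the implication you need is simply false in general: if $K$ is a small set near one side of a large fundamental domain and $\gamma$ is a side-pairing carrying that side to a distant side of $F$, then $\gamma(K)$ can lie within distance $T$ of $\bar{F}$ (so admissible $x,y$ exist) while $d(\gamma(K),K)\gg T$, so $\gamma\notin S$ in your sense, and since $y$ lies in the interior of the single tile $\gamma\bar{F}$ it belongs to no other translate $\gamma'(K)$ with $\gamma'\in S$.

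The correct route is the one you started and then abandoned: from $d(\gamma^{-1}x,k)\leq T$ with $k\in K$ you get $\gamma^{-1}x\in K_{1}$, hence $\gamma^{-1}\bar{F}$ meets the compact set $K_{1}$, and you should take $S=\{\gamma:\gamma^{-1}\bar{F}\cap K_{1}\neq\emptyset\}$. Finiteness of \emph{this} set is not a consequence of proper discontinuity alone (as $\bar{F}$ is not compact); it is exactly the local finiteness of the Dirichlet tessellation, which is why the choice of a Dirichlet domain matters and why the paper invokes \cite[Thm.\ 9.4.2]{Beardon} here. Once $S$ is finite, the triangle inequality $d(x,o)\leq d(x,\gamma k)+d(\gamma k,\gamma o)+d(\gamma o,o)\leq T+\mathrm{diam}(K)+\max_{\gamma\in S}d(\gamma o,o)$ (with $o\in K$ fixed) bounds $x$, and your construction of $K'$ as a closed bounded subset of $\bar{F}$ then goes through. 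Note the order of logic: the bound on $d(\gamma o,o)$, and hence the compactness of $K'$, uses the finiteness of $S$, so you cannot use boundedness of the relevant part of $\bar{F}$ to prove membership in $S$.
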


\begin{proof}
Suppose that $y=\gamma y'$ with $y'\in K$, $x\in\bar{F}$, $\gamma\in\Gamma$
and $d(x,y)\leq T$. Then $d(\gamma y',x)=d(y',\gamma^{-1}x)\leq T$
implies $\gamma^{-1}F$ intersects the compact set 
\[
\{\,z\,:\,d(z,K)\leq T\,\}.
\]
Since we took $F$ to be a Dirichlet fundamental domain, by \cite[Thm. 9.4.2]{Beardon}
there are only a finite number of $\gamma$ for which this is possible.
Let $S$ be this finite subset of $\Gamma$. Then furthermore, if
we let $o$ denote a fixed point in $K$ we have
\begin{align*}
d(x,o) & \leq d(x,\gamma y')+d(\gamma y',\gamma o)+d(\gamma o,o).\\
 & \leq T+d(y',o)+d(\gamma o,o)\leq C'(K,T).
\end{align*}
This implies that $x$ must be in some compact set $K'=K'(K,T)\subset\bar{F}$. 
\end{proof}
Define 
\begin{align*}
R_{\H,n}^{(T)}(s;x,y) & \eqdf R_{\H}^{(T)}(s;x,y)\mathrm{Id}_{V_{n}^{0}},\\
\LL_{\H,n}^{(T)}(s;x,y) & \eqdf\LL_{\H}^{(T)}(s;x,y)\mathrm{Id}_{V_{n}^{0}},
\end{align*}
and $R_{\H,n}^{(T)}(s),\LL_{\H,n}^{(T)}(s)$ the corresponding integral
operators. In the next lemma we describe the mapping properties of
these integral operators and describe a functional identity that they
satisfy that arises from (\ref{eq:interior-parametrix-identity-H}).
In the following we regard the function $\chi_{\CC}^{-}$ defined
in $\S$\ref{sec:Cusp-parametrix} as a $\Gamma$-invariant $C^{\infty}$
function on $\H$. The action of multiplication by $\chi_{\CC,\phi}^{-}$
on new functions in $L_{\new}^{2}(X_{\phi})$ corresponds to multiplication
by the scalar-valued invariant function $\chi_{\CC}^{-}$ on $L_{\phi}^{2}(\H;V_{n}^{0})$.
\begin{lem}
\label{lem:parametrix-bounded}For all $s\in[\frac{1}{2},1]$, 
\begin{enumerate}
\item The integral operator $R_{\H,n}^{(T)}(s)(1-\chi_{\CC}^{-})$ is well-defined
on $C_{c,\phi}^{\infty}(\H;V_{n}^{0})$ and extends to a bounded operator
\[
R_{\H,n}^{(T)}(s)(1-\chi_{\CC}^{-}):L_{\phi}^{2}(\H;V_{n}^{0})\to H_{\phi}^{2}(\H;V_{n}^{0}).
\]
\item The integral operator $\LL_{\H,n}^{(T)}(s)(1-\chi_{\CC}^{-})$ is
well-defined on $C_{c,\phi}^{\infty}(\H;V_{n}^{0})$ and and extends
to a bounded operator on $L_{\phi}^{2}(\H;V_{n}^{0})$.
\item We have 
\begin{equation}
\left[\Delta-s(1-s)\right]R_{\H,n}^{(T)}(s)(1-\chi_{\CC}^{-})=(1-\chi_{\CC}^{-})+\LL_{\H,n}^{(T)}(s)(1-\chi_{\CC}^{-})\label{eq:automorphic-para-eq}
\end{equation}
 as an identity of operators on $L_{\phi}^{2}(\H;V_{n}^{0})$.
\end{enumerate}
\end{lem}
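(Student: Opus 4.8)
The statement is the automorphic analog of Lemma \ref{lem:sobolev-on-H}, and the plan is to transfer everything there via the periodization (Poincar\'e series) construction. First I would define, for $f\in C_{c,\phi}^{\infty}(\H;V_{n}^{0})$ and $x\in\H$,
\[
R_{\H,n}^{(T)}(s)(1-\chi_{\CC}^{-})f(x)=\int_{y\in\H}R_{\H}^{(T)}(s;d(x,y))\,(1-\chi_{\CC}^{-}(y))\,f(y)\,d\H(y),
\]
and similarly with $\LL_{\H}^{(T)}$ in place of $R_{\H}^{(T)}$. Because $R_{\H}^{(T)}(s;r)$ and $\LL_{\H}^{(T)}(s;r)$ are supported in $r\le T+1$, and $1-\chi_{\CC}^{-}$ is supported on a set whose intersection with $\bar F$ is compact, Lemma \ref{lem:geometric} shows that for $x\in\bar F$ the integrand is supported in finitely many $\Gamma$-translates $\bigcup_{\gamma\in S}\gamma K$ of a fixed compact $K\subset\bar F$, so the integral is a finite sum of integrals over translates of a compact set and converges absolutely despite the mild logarithmic singularity of $R_{\H}^{(T)}$ on the diagonal (which is locally $L^1$, as in \eqref{eq:diagonal-asymp}). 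One then checks the $\rho_\phi$-equivariance \eqref{eq:automorphy}: using that $R_{\H}^{(T)}(s;\cdot)$ and $\LL_{\H}^{(T)}(s;\cdot)$ depend only on $d(x,y)$ hence are $\PSL_2(\R)$-invariant, that $1-\chi_{\CC}^{-}$ is $\Gamma$-invariant, and that $f(\gamma y)=\rho_\phi(\gamma)f(y)$, a change of variables $y\mapsto\gamma y$ gives $[R_{\H,n}^{(T)}(s)(1-\chi_{\CC}^{-})f](\gamma x)=\rho_\phi(\gamma)[R_{\H,n}^{(T)}(s)(1-\chi_{\CC}^{-})f](x)$, so the output lies in $C_\phi^\infty(\H;V_n^0)$ (smoothness away from handling the diagonal is clear, and smoothness through the diagonal follows because $R_\H^{(T)}(s)$ locally agrees with the genuine resolvent $R_\H(s)$, which maps $C^\infty$ to $C^\infty$).

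For the boundedness in parts (1) and (2), the plan is to dominate the $L^2(F)$ norm by the $L^2(\H)$ machinery already established. Given $f\in C_{c,\phi}^{\infty}(\H;V_{n}^{0})$, form the genuine $V_n^0$-valued function $\tilde f\eqdf (1-\chi_{\CC}^{-})f$ restricted to $F$ and extended by zero to $\H$; then $\tilde f\in L^2(\H;V_n^0)$ with $\|\tilde f\|_{L^2(\H)}=\|(1-\chi_{\CC}^{-})f\|_{L^2(F)}\le\|f\|_{L^2(F)}$, and I would show
\[
\bigl(R_{\H,n}^{(T)}(s)(1-\chi_{\CC}^{-})f\bigr)\big|_{\bar F}=\bigl(R_{\H,n}^{(T)}(s)\tilde f\bigr)\big|_{\bar F}+\Bigl(\sum_{\gamma\in S\setminus\{\mathrm{id}\}}\text{(translate terms)}\Bigr)\big|_{\bar F};
\]
more cleanly, since the kernel is radial and $S$ is finite, one has $R_{\H,n}^{(T)}(s)(1-\chi_{\CC}^{-})f=\sum_{\gamma\in S}R_{\H,n}^{(T)}(s)\bigl[(1-\chi_{\CC}^{-})f\cdot\mathbf 1_{\gamma K}\bigr]$ as functions on $\bar F$, and each summand has $L^2(\bar F)$-norm controlled by $\|R_{\H}^{(T)}(s)\|_{L^2(\H)\to H^2(\H)}$ applied to a function of $L^2(\H)$-norm at most $\|f\|_{L^2(F)}$, invoking Lemma \ref{lem:sobolev-on-H}(1) coordinatewise in any basis of $V_n^0$ (the bound $C(s,K,T)$ there is uniform in the translate since $K$ and $T$ are fixed, and $|S|$ is bounded by Lemma \ref{lem:geometric}). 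This gives $\|R_{\H,n}^{(T)}(s)(1-\chi_{\CC}^{-})f\|_{H_\phi^2(\H;V_n^0)}\le C(s,K,T)\,|S|^{1/2}\,\|f\|_{L^2(F)}$ — crucially a constant independent of $n$ and $\phi$ — and extends to $L_\phi^2(\H;V_n^0)$ by density of $C_{c,\phi}^\infty(\H;V_n^0)$. The same argument with $\LL_\H^{(T)}$ in place of $R_\H^{(T)}$, using Lemma \ref{lem:LH-noorm-bound} for $L^2$-boundedness, gives part (2).

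For the functional identity (3) in part (3), the plan is: apply $\Delta-s(1-s)$ to $R_{\H,n}^{(T)}(s)(1-\chi_{\CC}^{-})f$ for $f\in C_{c,\phi}^\infty(\H;V_n^0)$, expand as the finite sum over $\gamma\in S$ just described, apply Lemma \ref{lem:sobolev-on-H}(2) to each summand (the test function there lands in $H^2(\H)$, and summing finitely many such identities is legitimate), and then re-periodize: the sum of the ``$f$'' contributions reassembles to $(1-\chi_{\CC}^{-})f$ on $\bar F$, and the sum of the $\LL_\H^{(T)}$ contributions reassembles to $\LL_{\H,n}^{(T)}(s)(1-\chi_{\CC}^{-})f$. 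One then notes that both sides are $\rho_\phi$-equivariant and the identity of $L^2(\H)$-functions restricted to $\bar F$ is exactly the identity of $L_\phi^2(\H;V_n^0)$-functions; density of $C_{c,\phi}^\infty$ in $L_\phi^2$ and the boundedness from parts (1) and (2) (together with closedness of $\Delta$ on $H_\phi^2(\H;V_n^0)$) extend \eqref{eq:automorphic-para-eq} to all of $L_\phi^2(\H;V_n^0)$. \textbf{Main obstacle.} The genuinely technical point — and the only one beyond bookkeeping — is handling the diagonal singularity of $R_\H^{(T)}(s;r)\sim-\tfrac{1}{4\pi}\log(r/2)$ when $\gamma=\mathrm{id}$: one must justify Fubini and the integration-by-parts in the $\Delta$ computation through that singularity, but this is precisely what Lemma \ref{lem:sobolev-on-H} already does (the singularity is locally $L^1$, so the boundary terms at $r=\epsilon$ vanish as $\epsilon\to0$), so invoking it as a black box localizes the difficulty away. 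The finiteness of $S$ and compactness of $K'$ from Lemma \ref{lem:geometric} are what make the periodization a finite sum rather than an a-priori-divergent series, which is the real reason the whole transfer works cleanly.
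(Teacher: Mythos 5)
Your proposal is correct and follows essentially the same route as the paper: localize the integral via Lemma \ref{lem:geometric} to finitely many translates of a compact set, reduce coordinatewise in a basis of $V_{n}^{0}$ to Lemma \ref{lem:sobolev-on-H} (for part 1 and the functional identity) and Lemma \ref{lem:LH-noorm-bound} (for part 2), re-assemble using the automorphy equation, and extend to $L_{\phi}^{2}(\H;V_{n}^{0})$ by density and boundedness. The only deviation is that you cut with sharp indicators $\mathbf{1}_{\gamma K}$, which are not smooth, so Lemma \ref{lem:sobolev-on-H} (stated for $C_{c}^{\infty}$ data supported in a compact set) does not apply verbatim to your summands; the paper instead inserts a single smooth cutoff $\psi\equiv1$ on the union of the relevant translates so that each coordinate function $(1-\chi_{\CC}^{-})\psi f_{i}$ is genuinely $C_{c}^{\infty}$, and your version is repaired the same way or by a routine approximation argument.
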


\begin{proof}
Suppose first that $f\in C_{c,\phi}^{\infty}(\H;V_{n}^{0})$ (i.e.
automorphic, smooth, and compactly supported modulo $\Gamma$). We
have for $x\in F$
\begin{equation}
R_{\H,n}^{(T)}(s)(1-\chi_{\CC}^{-})[f](x)\eqdf\int_{y\in\H}R_{\H}^{(T)}(s;x,y)(1-\chi_{\CC}^{-}(y))f(y)d\H(y).\label{eq:truncated-resolvent-defining}
\end{equation}

The integrand here is non-zero unless $d(x,y)\leq T+1$ and $y$ is
in the the support of $1-\chi_{\CC}^{-}$, which is a union of the
$\Gamma$-translates of a compact set $K$ of of $\bar{F}$. Applying
Lemma \ref{lem:geometric} tells us then that there is compact $K_{1}=K_{1}(T)\subset\bar{F}$
and finite set $S=S(T)$ such that the integrand in (\ref{eq:truncated-resolvent-defining})
is supported on the compact set $K_{2}\eqdf\cup_{\gamma\in S}\gamma^{-1}K$
and the whole integral is zero unless $x\in K_{1}$ (given $x\in\bar{F}$
to begin with).

Let $\psi$ be a smooth function that is $\equiv1$ in $K_{2}\cup K$,
valued in $[0,1]$ and compactly supported. Let $\{e_{i}\,:\,i\in[n-1]\}$
denote an orthonormal basis for $V_{n}^{0}$ and let 
\[
f_{i}\eqdf\langle f,e_{i}\rangle\in C^{\infty}(\H).
\]
The above shows that for $x\in F$ we have
\begin{align}
R_{\H,n}^{(T)}(s)(1-\chi_{\CC}^{-})[f](x) & =R_{\H,n}^{(T)}(s)(1-\chi_{\CC}^{-})[\psi f](x)\nonumber \\
 & =\sum_{i=1}^{n-1}R_{\H}^{(T)}(s)\left[\left(1-\chi_{\CC}^{-}\right)\psi f_{i}\right](x)e_{i}\label{eq:psi-insert}
\end{align}
hence 
\begin{align*}
\|R_{\H,n}^{(T)}(s)(1-\chi_{\CC}^{-})[f](x)\|_{V_{n}^{0}}^{2} & =\sum_{i=1}^{n-1}\left|R_{\H}^{(T)}(s)\left[\left(1-\chi_{\CC}^{-}\right)\psi f_{i}\right](x)\right|^{2},\\
\|\Delta R_{\H,n}^{(T)}(s)(1-\chi_{\CC}^{-})[f](x)\|_{V_{n}^{0}}^{2} & =\sum_{i=1}^{n-1}\left|\Delta R_{\H}^{(T)}(s)\left[\left(1-\chi_{\CC}^{-}\right)\psi f_{i}\right](x)\right|^{2}.
\end{align*}
Each function $\left(1-\chi_{\CC}^{-}\right)\psi f_{i}$ is smooth
here and has has compact support depending only on $T$ and $\chi_{\CC}^{-}$.

Now using Lemma \ref{lem:sobolev-on-H} Part \ref{enu:L^2}, the fact
that $\left(1-\chi_{\CC}^{-}\right)$ is valued in $[0,1]$, using
$\psi$ is supported only on finitely many $\Gamma$-translates of
$F$, together with automorphy of $f$, we get by integrating over
$F$
\begin{align*}
\|R_{\H,n}^{(T)}(s)(1-\chi_{\CC}^{-})[f]\|_{L^{2}(F)}^{2} & \leq C\sum_{i}\|\psi f_{i}\|_{L^{2}(\H)}^{2}\leq C'\|f\|_{L^{2}(F)}^{2},\\
\|\Delta R_{\H,n}^{(T)}(s)(1-\chi_{\CC}^{-})[f]\|_{L^{2}(F)}^{2} & \leq C\sum_{i}\|\psi f_{i}\|_{L^{2}(\H)}^{2}\leq C'\|f\|_{L^{2}(F)}^{2},
\end{align*}
where $C,C'$ depend on $s,T$. Now this bound clearly extends to
$f\in L_{\phi}^{2}(\H;V_{n}^{0})$. \emph{This proves the first statement
of the lemma.}

The statement that $\LL_{\H,n}^{(T)}(s)$ is well-defined and bounded
on $L_{\phi}^{2}(\H;V_{n}^{0})$ is just an easier version of the
previous proof using Lemma \ref{lem:LH-noorm-bound} instead of Lemma
\ref{lem:sobolev-on-H}. \emph{This proves the second statement of
the lemma. }We note that we also obtain
\begin{equation}
\LL_{\H,n}^{(T)}(s)(1-\chi_{\CC}^{-})[f]=\sum_{i=1}^{n-1}\LL_{\H}^{(T)}(s)\left[\left(1-\chi_{\CC}^{-}\right)\psi f_{i}\right]e_{i}\label{eq:psi-insert-L}
\end{equation}
analogously to (\ref{eq:psi-insert}).

Now going back to (\ref{eq:psi-insert}) and using Lemma \ref{lem:sobolev-on-H}
Part \ref{enu:functional-identity} gives, \uline{considered by
restriction as equivalence classes of measurable functions on \mbox{$F$},}
\begin{align*}
 & (\Delta-s(1-s))R_{\H,n}^{(T)}(s)(1-\chi_{\CC}^{-})[f]\\
= & \sum_{i=1}^{n-1}(\Delta-s(1-s))R_{\H}^{(T)}(s)\left[\left(1-\chi_{\CC}^{-}\right)\psi f_{i}\right]e_{i}\\
= & \sum_{i=1}^{n-1}\left(\left(1-\chi_{\CC}^{-}\right)\psi f_{i}+\LL_{\H}^{(T)}(s)\left[\left(1-\chi_{\CC}^{-}\right)\psi f_{i}\right]\right)e_{i}\\
= & \left(1-\chi_{\CC}^{-}\right)f+\LL_{\H,n}^{(T)}(s)\left(1-\chi_{\CC}^{-}\right)\left[f\right].
\end{align*}
On the other hand, the fact that all functions at the two ends of
the string of equalities above satisfy the automorphy equation (\ref{eq:automorphy})
almost everywhere implies that indeed
\[
(\Delta-s(1-s))R_{\H,n}^{(T)}(s)(1-\chi_{\CC}^{-})[f]=\left(1-\chi_{\CC}^{-}\right)f+\LL_{\H,n}^{(T)}(s)\left(1-\chi_{\CC}^{-}\right)\left[f\right]
\]
as equivalence classes of measurable functions on $\H$. \emph{This
proves the final part of the lemma.}
\end{proof}
Lemma \ref{lem:parametrix-bounded} allows us to define our interior
parametrix 
\[
\M_{\phi}^{\i}(s):L_{\new}^{2}(X_{\phi})\to H_{\new}^{2}(X_{\phi})
\]
to be the operator corresponding under $L_{\new}^{2}(X_{\phi})\cong L_{\phi}^{2}(\H;V_{n}^{0})$
and $H_{\new}^{2}(X_{\phi})\cong H_{\phi}^{2}(\H;V_{n}^{0})$ to the
integral operator $R_{\H,n}^{(T)}(s)(1-\chi_{\CC}^{-}).$ The equation
(\ref{eq:automorphic-para-eq}) becomes for $s>\frac{1}{2}$
\begin{equation}
(\Delta_{X_{\phi}}-s(1-s))\M_{\phi}^{\i}(s)=(1-\chi_{\CC,\phi}^{-})+\LL_{\phi}^{\i}(s)\label{eq:interior-parametrix-equation}
\end{equation}
where 
\[
\LL_{\phi}^{\i}(s):L_{\new}^{2}(X_{\phi})\to L_{\new}^{2}(X_{\phi})
\]
is the operator corresponding under $L_{\new}^{2}(X_{\phi})\cong L_{\phi}^{2}(\H;V_{n}^{0})$
to the integral operator $\LL_{\H,n}^{(T)}(s)(1-\chi_{\CC}^{-})$
(the dependence on $T$ is suppressed here but remembered). 

Therefore if we define
\[
\M_{\phi}(s)\eqdf\M_{\phi}^{\i}(s)+\M_{\phi}^{\cusp}(s),
\]
we have 
\[
\M_{\phi}(s):L_{\new}^{2}(X_{\phi})\to H_{\new}^{2}(X_{\phi})
\]
and combining (\ref{eq:cusp-parametrix-equation}) and (\ref{eq:interior-parametrix-equation})
we obtain
\begin{align}
\left(\Delta_{X_{\phi}}-s(1-s)\right)\M_{\phi}(s) & =(1-\chi_{\CC,\phi}^{-})+\LL_{\phi}^{\i}(s)+\chi_{\CC,\phi}^{-}+\LL_{\phi}^{\cusp}(s)\nonumber \\
 & =1+\LL_{\phi}^{\i}(s)+\LL_{\phi}^{\cusp}(s).\label{eq:fundamental-identity}
\end{align}
The aim is to show that, with high probability, we can invert the
right hand side of (\ref{eq:fundamental-identity}) to define a bounded
resolvent in $s\geq s_{0}>\frac{1}{2}$. Since we have already suitably
bounded $\LL_{\phi}^{\cusp}(s)$ in Proposition \ref{prop:L-cusp-bound},
it remains to bound the operator $\LL_{\phi}^{\i}(s);$ this random
operator will be studied in detail in the next section.

\section{Random operators}

\subsection{Set up\label{subsec:Set-up}}

Suppose that $f\in C_{\phi}^{\infty}(\H;V_{n}^{0})$ with $\|f\|_{L^{2}(F)}^{2}<\infty$.
We have 
\begin{align}
\LL_{\H,n}^{(T)}(s)(1-\chi_{\CC}^{-})[f](x) & =\int_{y\in\H}\LL_{\H,n}^{(T)}(s;x,y)(1-\chi_{\CC}^{-}(y))f(y)\nonumber \\
 & =\sum_{\gamma\in\Gamma}\int_{y\in F}\LL_{\H,n}^{(T)}(s;\gamma x,y)\rho_{\phi}(\gamma^{-1})(1-\chi_{\CC}^{-}(y))f(y).\label{eq:error-inv-form}
\end{align}
Using that $\LL_{\H,n}^{(T)}(s;x,y)$ localizes to $d(x,y)\leq T+1$,
Lemma \ref{lem:geometric} implies that there is a compact set $K=K(T)\subset F$
and a finite set $S=S(T)\subset\Gamma$ such that for $x,y\in\bar{F}$
and $\gamma\in\Gamma$
\[
\LL_{\H,n}^{(T)}(s;\gamma x,y)\rho_{\phi}(\gamma^{-1})(1-\chi_{\CC}^{-}(y))=0
\]
unless $x,y\in K$ and $\gamma\in S$. 

The point of view we take in the rest of this section is that there
is an isomorphism of Hilbert spaces
\begin{align*}
L_{\phi}^{2}(\H;V_{n}^{0}) & \cong L^{2}(F)\otimes V_{n}^{0};\\
f\mapsto & \sum_{e_{i}}\langle f\lvert_{F},e_{i}\rangle_{V_{n}^{0}}\otimes e_{i}
\end{align*}
 where $e_{i}$ is an orthonormal basis of $V_{n}^{0}$ (the choice
of this basis does not matter). When conjugated by this isomorphism,
(\ref{eq:error-inv-form}) shows that
\[
\LL_{\H,n}^{(T)}(s)(1-\chi_{\CC}^{-})\cong\sum_{\gamma\in S}a_{\gamma}^{(T)}(s)\otimes\rho_{\phi}(\gamma^{-1})
\]
where 
\begin{align*}
a_{\gamma}^{(T)}(s) & :L^{2}(F)\to L^{2}(F)\\
a_{\gamma}^{(T)}(s)[f](x) & \eqdf\int_{y\in F}\LL_{\H}^{(T)}(s;\gamma x,y)(1-\chi_{\CC}^{-}(y))f(y)d\H(y).
\end{align*}
Again, this sum is supported on a finite set $S=S(T)\subset\Gamma$
depending on $T$ (and the fixed $\chi_{\CC}^{-}$).

Since $\LL_{\H}^{(T)}(s;\gamma x,y)$ is bounded depending on $T$
and smooth we have
\begin{align*}
\int_{x,y\in F}|\LL_{\H}^{(T)}(s;\gamma x,y)|^{2}(1-\chi_{\CC}^{-}(y))^{2}d\H(x)d\H(y) & =\\
\int_{x,y\in K}|\LL_{\H}^{(T)}(s;\gamma x,y)|^{2}(1-\chi_{\CC}^{-}(y))^{2}d\H(x)d\H(y) & <\infty.
\end{align*}
This shows that each integral operator $a_{\gamma}^{(T)}(s)$ is bounded,
and in fact Hilbert-Schmidt, so compact. We also produce the following
deviations estimate for the $a_{\gamma}^{(T)}(s)$.
\begin{lem}
\label{lem:deviations}For fixed $T>0$, there is a constant $c_{1}=c_{1}(T)>0$
such that for $s_{1},s_{2}\in[\frac{1}{2},1]$ and $\gamma\in S(T)$
we have 
\[
\|a_{\gamma}^{(T)}(s_{1})-a_{\gamma}^{(T)}(s_{2})\|_{L^{2}(F)}\leq c_{1}|s_{1}-s_{2}|.
\]
\end{lem}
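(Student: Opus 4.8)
The plan is to reduce the operator-norm bound to a Hilbert--Schmidt bound, which in turn reduces to a pointwise Lipschitz estimate on the kernel $\LL_{\H}^{(T)}(s;\rho)$ in the $s$ variable. First I would write the difference of integral operators as an integral operator with kernel
\[
\LL_{\H}^{(T)}(s_{1};d(\gamma x,y)) - \LL_{\H}^{(T)}(s_{2};d(\gamma x,y)),
\]
multiplied by $(1-\chi_{\CC}^{-}(y))$. By the discussion preceding the lemma, for $\gamma\in S(T)$ this kernel is supported on $(x,y)\in K\times K$ for the fixed compact $K=K(T)\subset\bar F$, and $d(\gamma x,y)$ ranges over a compact subinterval of $[0,\infty)$, in fact contained in $[T,T+1]$ once we restrict to where $\LL_{\H}^{(T)}$ is nonzero (part 1 of Lemma \ref{lem:LH-bounds}).

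Next I would invoke the mean value theorem in $s$: for each fixed $r_{0}\in[T,T+1]$,
\[
\bigl|\LL_{\H}^{(T)}(s_{1};r_{0})-\LL_{\H}^{(T)}(s_{2};r_{0})\bigr| \le |s_{1}-s_{2}|\,\sup_{s\in[\frac12,1]}\left|\frac{\partial\LL_{\H}^{(T)}}{\partial s}(s;r_{0})\right| \le c(T)\,|s_{1}-s_{2}|,
\]
using part \ref{enu:s-deriv} of Lemma \ref{lem:LH-bounds}. (If one prefers not to differentiate $\LL_{\H}^{(T)}$ directly, one can instead differentiate the explicit kernel $R_{\H}^{(T)}(s;r)=\chi_{T}(r)R_{\H}(s;r)$ using the formulas in $\S$\ref{subsec:Background-on-spectral} and the bounds \eqref{eq:resolvent-bounds-plane-s-deriv}; this gives the same conclusion since $\LL_{\H}^{(T)}$ is built from $R_{\H}^{(T)}$ and its radial derivatives via \eqref{eq:remainder-kernel-def}.) Then the Hilbert--Schmidt norm of the difference operator is bounded by
\[
\left(\int_{x,y\in K}\bigl|\LL_{\H}^{(T)}(s_{1};d(\gamma x,y))-\LL_{\H}^{(T)}(s_{2};d(\gamma x,y))\bigr|^{2}d\H(x)\,d\H(y)\right)^{\!1/2} \le c(T)\,|s_{1}-s_{2}|\,\bigl(\mathrm{vol}(K)^{2}\bigr)^{1/2},
\]
and since the operator norm is dominated by the Hilbert--Schmidt norm, setting $c_{1}(T)\eqdf c(T)\,\mathrm{vol}(K)$ finishes the argument.

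The main thing to be careful about — rather than a genuine obstacle — is that the $s$-derivative bound in Lemma \ref{lem:LH-bounds}\eqref{enu:s-deriv} is only asserted on the compact radial window $[T,T+1]$, so one must first use part 1 of that lemma to restrict attention to that window before applying the mean value theorem; outside $[T,T+1]$ both kernels vanish and contribute nothing. Everything else is a routine Hilbert--Schmidt estimate over the fixed compact domain $K\times K$, with all constants allowed to depend on the (now fixed) $T$.
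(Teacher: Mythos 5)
Your proposal is correct and follows essentially the same route as the paper: both reduce the operator norm to the Hilbert--Schmidt norm of the difference kernel, which is supported on the fixed compact set $K(T)\times K(T)$, and then apply the $s$-Lipschitz bound coming from Lemma \ref{lem:LH-bounds}\eqref{enu:s-deriv} (your explicit mean value theorem step is exactly what the paper uses implicitly). The only cosmetic difference is that the paper obtains uniformity in $\gamma$ by taking a maximum over the finite set $S(T)$, whereas in your version the constant is already $\gamma$-independent, which amounts to the same thing.
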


\begin{proof}
Suppose $s_{1},s_{2}\in[\frac{1}{2},1]$, and $\gamma$ is fixed.
The operator $a_{\gamma}^{(T)}(s_{1})-a_{\gamma}^{(T)}(s_{2})$ is
a Hilbert-Schmidt operator on $L^{2}(F)$ with kernel
\[
[\LL_{\H}^{(T)}(s_{1};\gamma x,y)-\LL_{\H}^{(T)}(s_{2};\gamma x,y)](1-\chi_{\CC}^{-}(y));
\]
once again this is zero unless $x,y$ are in compact set $K(T)$. 

We have by Lemma \ref{lem:LH-bounds} Part \ref{enu:s-deriv} that
for all $x,y\in K$
\[
\left|\LL_{\H}^{(T)}(s_{1};\gamma x,y)-\LL_{\H}^{(T)}(s_{2};\gamma x,y)\right|\leq c(T)|s_{1}-s_{2}|.
\]
It follows that there is a constant $c_{1}=c_{1}(T)$ such that, writing
$\|\bullet\|_{\HS}$ for Hilbert-Schmidt norm of a Hilbert-Schmidt
operator on $L^{2}(F)$, we have
\begin{align*}
\|a_{\gamma}^{(T)}(s_{1})-a_{\gamma}^{(T)}(s_{2})\|_{L^{2}(F)} & \leq\|[\LL_{\H}^{(T)}(s_{1})-\LL_{\H}^{(T)}(s_{2})](1-\chi_{\CC}^{-})\|_{\HS}\\
 & \leq c_{1}(T)|s_{1}-s_{2}|.
\end{align*}
Finally, because $\gamma$ is in the finite set $S(T)$, the estimate
can be taken uniformly over all $\gamma\in S(T)$ by taking the maximal
value of $c_{1}$.
\end{proof}

\subsection{The operator on the free group}

We are momentarily going to apply Corollary \ref{cor:bc} to the random
operator
\[
\L_{s,\phi}^{(T)}\eqdf\sum_{\gamma\in S}a_{\gamma}^{(T)}(s)\otimes\rho_{\phi}(\gamma^{-1})
\]
Up to an intermediate step, where we approximate the coefficients
$a_{\gamma}^{(T)}(s)$ by finite rank operators, we expect to learn
that the operator norm of $\L_{s,\phi}^{(T)}$ is close to the operator
norm of 
\[
\L_{s,\infty}^{(T)}\eqdf\sum_{\gamma\in S}a_{\gamma}^{(T)}(s)\otimes\rho_{\infty}(\gamma^{-1})
\]
on $L^{2}(F)\otimes\ell^{2}(\Gamma)$, where 
\[
\rho_{\infty}:\Gamma\to\End(\ell^{2}(\Gamma)))
\]
 is the right regular representation of $\Gamma$. So we must understand
the operator $\L_{s,\infty}^{(T)}$. Under the isomorphism
\begin{align*}
L^{2}(F)\otimes\ell^{2}(\Gamma) & \cong L^{2}(\H),\\
f\otimes\delta_{\gamma} & \mapsto f\circ\gamma^{-1}
\end{align*}
(with $f\circ\gamma^{-1}$ extended by zero from a function on $\gamma F$)
the operator $\L_{s,\infty}^{(T)}$ is conjugated to none other than
\[
\LL_{\H}^{(T)}(s)(1-\chi_{\CC}^{-}):L^{2}(\H)\to L^{2}(\H)
\]
from $\S\S$\ref{subsec:Interior-parametrix-on-plane}. Since $(1-\chi_{\CC}^{-})$
is valued in $[0,1]$, multiplication by it has operator norm $\leq1$
on $L^{2}(\H)$. Hence we obtain the following corollary of Lemma
\ref{lem:LH-noorm-bound}.
\begin{cor}
\label{cor:L_infty_bound}For any $s_{0}>\frac{1}{2}$ there is $T=T(s_{0})>0$
such that for any $s\in[s_{0},1]$ we have
\[
\|\L_{s,\infty}^{(T)}\|_{L^{2}(F)\otimes\ell^{2}(\Gamma)}\leq\frac{1}{5}.
\]
\end{cor}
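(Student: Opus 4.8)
The plan is to identify the operator $\L_{s,\infty}^{(T)}$ with the scalar operator $\LL_{\H}^{(T)}(s)(1-\chi_{\CC}^{-})$ on $L^2(\H)$, for which we already have a norm bound from Lemma \ref{lem:LH-noorm-bound}, and then invoke that bound. The identification has essentially already been spelled out in the text immediately preceding the corollary: under the unitary $L^2(F)\otimes\ell^2(\Gamma)\cong L^2(\H)$ sending $f\otimes\delta_\gamma\mapsto f\circ\gamma^{-1}$ (extended by zero off $\gamma F$), each summand $a_\gamma^{(T)}(s)\otimes\rho_\infty(\gamma^{-1})$ transforms into the integral operator with kernel $\LL_\H^{(T)}(s;x,y)(1-\chi_{\CC}^-(y))$ restricted appropriately, and summing over $\gamma\in S$ reconstitutes the full integral operator $\LL_\H^{(T)}(s)(1-\chi_{\CC}^-)$ on all of $L^2(\H)$. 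So the first step is simply to record that $\|\L_{s,\infty}^{(T)}\| = \|\LL_\H^{(T)}(s)(1-\chi_{\CC}^-)\|_{L^2(\H)}$.

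Next I would bound the right-hand side. Since $1-\chi_{\CC}^-$ takes values in $[0,1]$, multiplication by it is a bounded operator on $L^2(\H)$ of norm at most $1$, so
\[
\|\LL_\H^{(T)}(s)(1-\chi_{\CC}^-)\|_{L^2(\H)}\leq \|\LL_\H^{(T)}(s)\|_{L^2(\H)}.
\]
Lemma \ref{lem:LH-noorm-bound} then gives $\|\LL_\H^{(T)}(s)\|_{L^2(\H)}\leq CTe^{(\frac12-s)T}$ for an absolute constant $C$ and all $s\in[\frac12,1]$, $T>0$.

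Finally, given $s_0>\frac12$, I would choose $T=T(s_0)$ large enough that $CTe^{(\frac12-s_0)T}\leq\frac15$; this is possible because $Te^{(\frac12-s_0)T}\to0$ as $T\to\infty$ (the exponential decay rate $\frac12-s_0<0$ dominates the linear factor). For any $s\in[s_0,1]$ we have $e^{(\frac12-s)T}\leq e^{(\frac12-s_0)T}$, so the bound $CTe^{(\frac12-s)T}\leq\frac15$ holds uniformly for all such $s$, and hence $\|\L_{s,\infty}^{(T)}\|\leq\frac15$.

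There is no real obstacle here: the corollary is a bookkeeping consequence of Lemma \ref{lem:LH-noorm-bound} combined with the already-established unitary identification. The only point requiring a word of care is verifying that the unitary conjugation genuinely matches the sum $\sum_{\gamma\in S}a_\gamma^{(T)}(s)\otimes\rho_\infty(\gamma^{-1})$ with the \emph{single} integral operator $\LL_\H^{(T)}(s)(1-\chi_{\CC}^-)$ on $L^2(\H)$ — i.e., that the finiteness of $S$ and the support properties of $\LL_\H^{(T)}$ make the pieces fit together into the full kernel without overlap or omission — but this was precisely the content of the discussion using Lemma \ref{lem:geometric} just above, so it can be cited rather than redone.
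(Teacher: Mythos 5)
Your proposal is correct and follows essentially the same route as the paper: the unitary identification $L^{2}(F)\otimes\ell^{2}(\Gamma)\cong L^{2}(\H)$ conjugating $\L_{s,\infty}^{(T)}$ to $\LL_{\H}^{(T)}(s)(1-\chi_{\CC}^{-})$, the observation that multiplication by $1-\chi_{\CC}^{-}$ has norm at most $1$, and then Lemma \ref{lem:LH-noorm-bound} with $T$ chosen so that $CTe^{(\frac{1}{2}-s_{0})T}\leq\frac{1}{5}$, uniformly in $s\in[s_{0},1]$. No discrepancies to report.
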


\subsection{Probabilistic bounds for operator norms}

The aim of this section is to prove the following result as a consequence
of Corollary \ref{cor:bc}.
\begin{prop}
\label{prop:prob-est}For any $s_{0}>\frac{1}{2}$ there is $T=T(s_{0})>0$
such that for $s$ \uline{fixed} in $[s_{0},1]$, with probability
tending to one as $n\to\infty$,
\[
\|\L_{s,\phi}^{(T)}\|_{L^{2}(F)\otimes V_{n}^{0}}\leq\frac{2}{5}.
\]
\end{prop}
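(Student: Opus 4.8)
The plan is to deduce this from Corollary~\ref{cor:bc}, after first replacing the compact coefficients $a_\gamma^{(T)}(s)$ by finite-dimensional ones. I would fix $T=T(s_0)$ as furnished by Corollary~\ref{cor:L_infty_bound}, so that $\|\L_{s,\infty}^{(T)}\|_{L^2(F)\otimes\ell^2(\Gamma)}\le\tfrac15$ for all $s\in[s_0,1]$, and fix such an $s$; recall that the index set $S=S(T)\subset\Gamma$ is finite. Each $a_\gamma^{(T)}(s)$ is Hilbert--Schmidt, hence compact, so for a parameter $\delta>0$ to be chosen we may pick finite-rank operators $\tilde a_\gamma$ on $L^2(F)$ with $\|a_\gamma^{(T)}(s)-\tilde a_\gamma\|_{L^2(F)}\le\delta$ for every $\gamma\in S$. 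Let $W\subset L^2(F)$ be the finite-dimensional subspace, of dimension $r$ say, spanned by the ranges of all the $\tilde a_\gamma$ together with the ranges of all the $\tilde a_\gamma^*$. Then $\tilde a_\gamma$ vanishes on $W^\perp$ and maps into $W$, so $\tilde a_\gamma=\iota_W\bar a_\gamma\pi_W$ for suitable $\bar a_\gamma\in\End(W)\cong\mathrm{Mat}_{r\times r}(\C)$, where $\pi_W\colon L^2(F)\to W$ is the orthogonal projection and $\iota_W\colon W\hookrightarrow L^2(F)$ the inclusion.

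Because $\iota_W\otimes\mathrm{Id}$ is an isometry and $(\pi_W\otimes\mathrm{Id})(\iota_W\otimes\mathrm{Id})$ acts as the identity on $W\otimes(-)$, this compression leaves operator norms unchanged:
\[
\left\|\sum_{\gamma\in S}\tilde a_\gamma\otimes\rho_\phi(\gamma^{-1})\right\|_{L^2(F)\otimes V_n^0}=\left\|\sum_{\gamma\in S}\bar a_\gamma\otimes\rho_\phi(\gamma^{-1})\right\|_{\C^r\otimes V_n^0},
\]
and likewise with $\rho_\infty$ and $\ell^2(\Gamma)$ in place of $\rho_\phi$ and $V_n^0$. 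After the trivial reindexing $\gamma\mapsto\gamma^{-1}$ the sum on the right is of the form to which Corollary~\ref{cor:bc} applies, which then gives: for any $\epsilon>0$, with probability tending to one as $n\to\infty$,
\[
\left\|\sum_{\gamma\in S}\bar a_\gamma\otimes\rho_\phi(\gamma^{-1})\right\|_{\C^r\otimes V_n^0}\le\left\|\sum_{\gamma\in S}\bar a_\gamma\otimes\rho_\infty(\gamma^{-1})\right\|_{\C^r\otimes\ell^2(\Gamma)}+\epsilon.
\]

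To conclude I would undo the approximation. Since the $\rho_\phi(\gamma^{-1})$ and $\rho_\infty(\gamma^{-1})$ are orthogonal of norm one and $S$ is finite, the triangle inequality gives $\|\L_{s,\phi}^{(T)}\|\le\|\sum_{\gamma\in S}\tilde a_\gamma\otimes\rho_\phi(\gamma^{-1})\|+|S|\delta$ and, on the other side, $\|\sum_{\gamma\in S}\bar a_\gamma\otimes\rho_\infty(\gamma^{-1})\|=\|\sum_{\gamma\in S}\tilde a_\gamma\otimes\rho_\infty(\gamma^{-1})\|\le\|\L_{s,\infty}^{(T)}\|+|S|\delta\le\tfrac15+|S|\delta$ by Corollary~\ref{cor:L_infty_bound}. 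Chaining these with the displayed probabilistic inequality, a.a.s.\ $\|\L_{s,\phi}^{(T)}\|\le\tfrac15+2|S|\delta+\epsilon$; choosing $\delta$ and $\epsilon$ so that $2|S|\delta<\tfrac1{10}$ and $\epsilon<\tfrac1{10}$ yields $\|\L_{s,\phi}^{(T)}\|\le\tfrac25$.

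The one step requiring genuine care is the finite-rank reduction: one must check that passing from the compact $a_\gamma^{(T)}(s)$ to the matrices $\bar a_\gamma\in\End(W)$ alters neither the norm of the random operator on $W\otimes V_n^0$ nor that of its deterministic counterpart on $W\otimes\ell^2(\Gamma)$, so that Corollary~\ref{cor:bc} is applied to exactly matching finite matrices on both sides, and that the single fixed subspace $W$ works simultaneously for all $\gamma\in S$. Everything else is routine $\epsilon$--$\delta$ bookkeeping over the finite set $S$, layered on top of the bound already established in Corollary~\ref{cor:L_infty_bound}.
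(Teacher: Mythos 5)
Your proposal is correct and follows essentially the same route as the paper: fix $T$ via Corollary \ref{cor:L_infty_bound}, approximate the compact coefficients $a_\gamma^{(T)}(s)$ by operators on a fixed finite-dimensional subspace $W\subset L^2(F)$, apply Corollary \ref{cor:bc} to the finite-matrix model, and undo the approximation using unitarity of $\rho_\phi$, $\rho_\infty$ and finiteness of $S(T)$. Your extra care with the compression $\tilde a_\gamma=\iota_W\bar a_\gamma\pi_W$ only makes explicit a step the paper leaves implicit; the quantitative bookkeeping ($\delta$, $\epsilon$ versus the paper's $\tfrac{1}{20|S(T)|}$ and $\tfrac{1}{20}$) is equivalent.
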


\begin{proof}
Let $T$ be the one provided by Corollary \ref{cor:L_infty_bound}
for the given $s_{0}$, so that
\begin{equation}
\|\L_{s,\infty}^{(T)}\|_{L^{2}(F)\otimes\ell^{2}(\Gamma)}\leq\frac{1}{5}.\label{eq:temp-l;infty-bound}
\end{equation}
Fix $s\in[s_{0},1]$. Recall that the coefficients $a_{\gamma}^{(T)}(s)$
are supported on a finite set $S=S(T)\subset\Gamma$. Because the
$a_{\gamma}^{(T)}(s)$ are Hilbert-Schmidt, and hence compact, we
can find a \emph{finite-dimensional} subspace 
\[
W\subset L^{2}(F)
\]
 and operators $b_{\gamma}^{(T)}(s):W\to W$ such that 
\[
\|b_{\gamma}^{(T)}(s)-a_{\gamma}^{(T)}(s)\|_{L^{2}(F)}\leq\frac{1}{20|S(T)|}
\]
 for all $\gamma\in S(T)$. Therefore, since each $\rho_{\phi}(\gamma)$
is unitary on $V_{n}^{0}$ we get 
\begin{equation}
\|\L_{s,\phi}^{(T)}-\sum_{\gamma\in S}b_{\gamma}^{(T)}(s)\otimes\rho_{\phi}(\gamma^{-1})\|_{L^{2}(F)\otimes V_{n}^{0}}\leq\frac{1}{20}.\label{eq:approx-fr-1}
\end{equation}

We now apply Corollary \ref{cor:bc} to the operator $\sum_{\gamma\in S}b_{\gamma}^{(T)}(s)\otimes\rho_{\phi}(\gamma^{-1})$
to obtain that with probability $\to1$ as $n\to\infty$, 
\begin{equation}
\|\sum_{\gamma\in S}b_{\gamma}^{(T)}(s)\otimes\rho_{\phi}(\gamma^{-1})\|_{W\otimes V_{n}^{0}}\leq\|\sum_{\gamma\in S}b_{\gamma}^{(T)}(s)\otimes\rho_{\infty}(\gamma^{-1})\|_{W\otimes\ell^{2}(\Gamma)}+\frac{1}{20}.\label{eq:cd-appication}
\end{equation}
(Above, $\frac{1}{20}$ could be replaced by any $\epsilon>0$ but
this is not needed here.)

On the other hand, we also have by the same argument as led to (\ref{eq:approx-fr-1})
\begin{equation}
\|\L_{s,\infty}^{(T)}-\sum_{\gamma\in S}b_{\gamma}^{(T)}(s)\otimes\rho_{\infty}(\gamma^{-1})\|_{L(F)\otimes\ell^{2}(\Gamma)}\leq\frac{1}{20}.\label{eq:approx-fr-2}
\end{equation}
Then combining (\ref{eq:temp-l;infty-bound}), (\ref{eq:approx-fr-1}),
(\ref{eq:cd-appication}), and (\ref{eq:approx-fr-2}) gives the result.
\end{proof}

\section{Proof of Theorem \ref{thm:main-theorem}}

Given $\epsilon>0$ let $s_{0}=\frac{1}{2}+\sqrt{\epsilon}$ so that
$s_{0}(1-s_{0})=\frac{1}{4}-\epsilon$. We assume $\chi_{\CC}^{\pm}$
were chosen as in Proposition \ref{prop:L-cusp-bound} for this $s_{0}$
so that 
\begin{equation}
\|\LL_{\phi}^{\cusp}(s)\|_{L^{2}}\leq\frac{1}{5}\label{eq:Lcusp-final}
\end{equation}
for all $s\in(s_{0},1]$.

Let $T=T(s_{0})$ be the value provided by Proposition \ref{prop:prob-est}
for this $s_{0}$. To control all values of $s\in[s_{0},1]$ we use
a finite net. For $s_{1},s_{2}\in[s_{0},1]$ we have
\begin{equation}
\L_{s_{1},\phi}^{(T)}-\L_{s_{2},\phi}^{(T)}=\sum_{\gamma\in S}[a_{\gamma}^{(T)}(s_{1})-a_{\gamma}^{(T)}(s_{2})]\otimes\rho_{\phi}(\gamma^{-1})\label{eq:difff-opeartor}
\end{equation}
where $S\subset\Gamma$ is a finite set depending on $T$. Lemma \ref{lem:deviations}
tells us that for $c_{1}=c_{1}(T)>0$ we have 
\[
\|a_{\gamma}^{(T)}(s_{1})-a_{\gamma}^{(T)}(s_{2})\|_{L^{2}(F)}\leq c_{1}|s_{1}-s_{2}|
\]
for all $\gamma\in S$ and $s_{1},s_{2}\in[s_{0},1]$; hence we obtain
from (\ref{eq:difff-opeartor}) and as each $\rho_{\phi}(\gamma^{-1})$
is unitary that 
\begin{equation}
\|\L_{s_{1},\phi}^{(T)}-\L_{s_{2},\phi}^{(T)}\|\leq|S|c_{1}|s_{1}-s_{2}|.\label{eq:dev-operator}
\end{equation}
We choose a finite set $Y$, depending on $s_{0}$, of points in $[s_{0},1]$
such that every point of $[s_{0},1]$ is within 
\[
\frac{1}{5|S|c_{1}}
\]
of some element of $Y$. Now, applying Proposition \ref{prop:prob-est}
to each point of $Y$ we obtain that with probability tending to one
as $n\to\infty$ we have 
\[
\|\L_{s,\phi}^{(T)}\|_{L^{2}(F)\otimes V_{n}^{0}}\leq\frac{2}{5}
\]
\emph{for all $s\in Y$.} Hence also using (\ref{eq:dev-operator})
we obtain that with probability tending to one as $n\to\infty$ we
have 
\[
\|\L_{s,\phi}^{(T)}\|_{L^{2}(F)\otimes V_{n}^{0}}\leq\frac{3}{5}
\]
for all $s\in[s_{0},1]$.

Recall that we defined
\[
\M_{\phi}(s)\eqdf\M_{\phi}^{\i}(s)+\M_{\phi}^{\cusp}(s).
\]
By Lemma \ref{lem:parametrix-bounded} and the discussion after (\ref{eq:mcusp-def}),
for $s>\frac{1}{2}$ $\M_{\phi}(s)$ is a bounded operator from $L_{\new}^{2}(X_{\phi})$
to $H_{\new}^{2}(X_{\phi})$. We also proved in (\ref{eq:fundamental-identity})
\[
\left(\Delta_{X_{\phi}}-s(1-s)\right)\M_{\phi}(s)=1+\LL_{\phi}^{\i}(s)+\LL_{\phi}^{\cusp}(s)
\]
on $L_{\new}^{2}(X_{\phi})$. The operator $\LL_{\phi}^{\i}(s)$ is
unitarily conjugate to $\L_{s,\phi}^{(T)}$ as in $\S\S$\ref{subsec:Set-up}
and hence a.a.s. has operator norm at most $\frac{3}{5}$. Hence also
using the deterministic bound (\ref{eq:Lcusp-final}) we obtain that
\[
\|\LL_{\phi}^{\i}(s)+\LL_{\phi}^{\cusp}(s)\|_{L_{\new}^{2}(X_{\phi})}\leq\frac{4}{5}
\]
and hence a.a.s.
\[
\left(1+\LL_{\phi}^{\i}(s)+\LL_{\phi}^{\cusp}(s)\right)^{-1}
\]
 exists as a bounded operator from $L_{\new}^{2}(X_{\phi})$ to itself.
We now get 
\[
\left(\Delta_{X_{\phi}}-s(1-s)\right)\M_{\phi}(s)\left(1+\LL_{\phi}^{\i}(s)+\LL_{\phi}^{\cusp}(s)\right)^{-1}=1
\]
which shows there is a bounded right inverse to $\left(\Delta_{X_{\phi}}-s(1-s)\right)$;
this shows that $\left(\Delta_{X_{\phi}}-s(1-s)\right)$ maps $H_{\new}^{2}(X_{\phi})$
onto $L_{\new}^{2}(X_{\phi})$ and since it is self-adjoint for $s\in[\frac{1}{2},1]$,
cannot have any kernel in $L_{\new}^{2}(X_{\phi})$. This shows a.a.s.
that $\Delta_{X_{\phi}}$ has no \uline{new} eigenvalues $\lambda$
with $\lambda\leq s_{0}(1-s_{0})=\frac{1}{4}-\epsilon$. This concludes
the proof of Theorem \ref{thm:main-theorem}.

\section{Proof of Corollary \ref{cor:there-exist-compact}}

Take $X$ to be a once-punctured torus or thrice-punctured sphere
and apply Corollary \ref{cor:There-exist-finite-area} to obtain a
sequence of connected orientable finite area non-compact hyperbolic
surfaces $X_{i}$ with $\chi(X_{i})=-i$ and 
\[
\inf\left(\spec(\Delta_{X_{i}})\cap(0,\infty)\right)\to\frac{1}{4}.
\]
Suppose $X_{i}$ has genus $g_{i}$ and $n_{i}$ cusps. We have
\[
-i=\chi(X_{i})=2-2g_{i}-n_{i}
\]
which shows
\[
i\equiv\chi(X_{i})\equiv n_{i}\bmod2.
\]
In particular, for even $i$, we have an even number of cusps in $X_{i}$.
We use the following result of Buser, Burger, and Dodziuk \cite{BBD}.
See Brooks and Makover \cite[Lemma 1.1]{BrooksMakover1} for the extraction
of this lemma from \cite{BBD}.
\begin{lem}[Handle Lemma]
\label{lem:handle}Let $X$ be a finite area hyperbolic surface with
an even number of cusps $\{C_{i}\}$. It is possible to deform the
surface $X$ in a certain way to a finite area hyperbolic surface
with boundary, where each cusp becomes a bounding geodesic of length
$t$, and then glue the geodesic corresponding to $C_{2i-1}$ to the
one corresponding to $C_{2i}$ to form a family of closed hyperbolic
surfaces $X^{(t)}$ such that
\[
\limsup_{t\to0}\lambda_{1}(X^{(t)})\geq\inf\left(\spec(\Delta_{X})\cap(0,\infty)\right).
\]
\end{lem}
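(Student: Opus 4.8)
The plan is to prove the (formally stronger) statement $\liminf_{t\to0}\lambda_1(X^{(t)})\ge\lambda$, where $\lambda:=\inf(\spec(\Delta_X)\cap(0,\infty))$, by transplanting a bottom eigenfunction of $X^{(t)}$ to a legitimate test function on $X$. Since $X$ is non-compact with finite area, the constants lie in $L^2(X)$ and span the $0$-eigenspace, the continuous spectrum begins at $\tfrac14$, so $\lambda\le\tfrac14$ and $\lambda$ is the infimum of the Rayleigh quotient $\int_X|\nabla g|^2/\int_X|g|^2$ over nonzero $g\in C_c^\infty(X)$ with $\int_X g=0$. First I make the construction explicit: opening a cusp $C_j$ to a boundary geodesic of length $t$ replaces a cuspidal half-collar by a piece $\{0\le\rho<w(t)\}\times(\R/\Z)$ carrying the metric $d\rho^2+t^2\cosh^2\rho\,d\theta^2$, with the geodesic at $\rho=0$ and $w(t)\to\infty$ as $t\to0$ by the collar lemma; gluing the pieces around $C_{2i-1}$ and $C_{2i}$ along $\rho=0$ (with any twist) produces a ``handle'' $\mathcal H_i^{(t)}\cong\{|\rho|<w(t)\}\times(\R/\Z)$ with the same metric. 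Put $\mathcal H^{(t)}=\bigcup_i\mathcal H_i^{(t)}$ and $Y^{(t)}=X^{(t)}\setminus\mathcal H^{(t)}$; as $t\to0$, $Y^{(t)}$ is $(1+o(1))$-bi-Lipschitz to the truncation of $X$ that deletes the horoball of each cusp above height $R(t)\to\infty$, and each handle has uniformly bounded area.

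The analytic core is a Hardy inequality identical in spirit to Lemma \ref{lem:bounded-below}: every $u\in C_c^\infty(\mathcal H_i^{(t)})$ obeys $\int|\nabla u|^2\ge\tfrac14\int|u|^2$. I would prove this by Fourier decomposition in $\theta$: a nonzero mode $u_n(\rho)e^{2\pi in\theta}$ picks up at least $(t\cosh w(t))^{-2}\gg\tfrac14$ in its Rayleigh quotient from the angular term, because $t\cosh w(t)$ stays bounded as $t\to0$; for the zero mode the substitution $v=(\cosh\rho)^{1/2}u_0$ converts the radial form into $\int(|v'|^2+W(\rho)|v|^2)$ with $W=\tfrac12-\tfrac14\tanh^2\rho\ge\tfrac14$. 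This is the same computation as in Lemma \ref{lem:bounded-below}, of which the cusp is the $t\to0$ degeneration.

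Now fix $\epsilon>0$, let $\mu_t=\lambda_1(X^{(t)})$ and let $f_t$ be a unit eigenfunction with $\int_{X^{(t)}}f_t=0$. If $\mu_t\ge\tfrac14-\epsilon$ then $\mu_t\ge\lambda-\epsilon$ and there is nothing to do, so assume $\mu_t<\tfrac14-\epsilon$. Choose a smooth partition $\psi^2+\phi^2=1$ with $\psi\equiv1$ on $Y^{(t)}$, $\phi$ supported compactly inside the handles, and $|\nabla\psi|^2+|\nabla\phi|^2\le\delta_t^2$ with $\delta_t=O(w(t)^{-1})$ (spread the transition over half the collar width). The IMS localization identity reads $\mu_t=\int|\nabla(\psi f_t)|^2+\int|\nabla(\phi f_t)|^2-\int(|\nabla\psi|^2+|\nabla\phi|^2)|f_t|^2$; writing $a=\|\psi f_t\|^2$ and applying the handle inequality to $\phi f_t$ gives $\mu_t\ge\int|\nabla(\psi f_t)|^2+\tfrac14(1-a)-\delta_t^2$, so $1-a\le4\mu_t+4\delta_t^2<1-4\epsilon+4\delta_t^2$, hence $a\ge2\epsilon$ once $t$ is small. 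Transplanting $\psi f_t$ through the bi-Lipschitz identification of $Y^{(t)}$ (and extending by $0$ past height $R(t)$) produces $g_t\in C_c^\infty(X)$ with $\|g_t\|^2=(1+o(1))a$ and $\int_X|\nabla g_t|^2\le(1+o(1))\bigl(\mu_t-\tfrac14(1-a)+\delta_t^2\bigr)$. Set $\tilde g_t=g_t-\tfrac1{|X|}\int_X g_t$, so $\int_X\tilde g_t=0$. Using $\dfrac{\mu_t-\frac14(1-a)}{a}=\dfrac{\mu_t-\frac14}{a}+\tfrac14\le\mu_t$ (valid since $\mu_t<\tfrac14$ and $0<a\le1$) one gets $\dfrac{\int_X|\nabla\tilde g_t|^2}{\int_X|\tilde g_t|^2}\le\mu_t+o(1)$, \emph{provided} the mean-correction does not shrink the denominator, i.e. provided $\tfrac1{|X|}\bigl(\int_X g_t\bigr)^2=o(a)$. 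Then $\lambda\le\mu_t+o(1)$, i.e. $\mu_t\ge\lambda-o(1)$; combining the two cases, for every $\epsilon>0$ one has $\lambda_1(X^{(t)})\ge\lambda-\epsilon$ for all small $t$, which is the claim.

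I expect the provision just flagged to be the main obstacle. From $\int_{X^{(t)}}f_t=0$ one has $\int_X g_t=-(1+o(1))\int_{\mathcal H^{(t)}}(1-\psi)f_t$, which by Cauchy--Schwarz is $O\bigl(\sqrt{(\#\,\text{cusps of }X)\,(1-a)}\bigr)$, so $\tfrac1{|X|}(\int_X g_t)^2=O\bigl((\#\,\text{cusps of }X)/\mathrm{area}(X)\bigr)$; this is $o(a)$ as soon as $X$ has $o(\mathrm{area}(X))$ cusps, which holds with enormous room in the intended application, where $X=X_i$ is a degree-$i$ cover of a surface with $\chi=-1$ (so $\mathrm{area}(X_i)=2\pi i\to\infty$, while the number of cusps of $X_i$ is $o(i)$). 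I would also record that, because $X$ is connected, each glued geodesic is automatically non-separating in $X^{(t)}$ — cutting along it recovers the connected surface obtained by opening the two cusps — so there is no ``pinching'' eigenfunction forcing $\lambda_1(X^{(t)})\to0$, which is the structural reason the transplantation argument can close.
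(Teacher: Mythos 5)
The paper does not prove this lemma at all: it is quoted from Buser--Burger--Dodziuk \cite{BBD}, via \cite[Lemma 1.1]{BrooksMakover1}. So yours is an independent attempt, and its overall shape (IMS localization, a $\frac14$ Hardy bound on the hyperbolic collars, transplantation of the localized eigenfunction to the cusped surface) is indeed the standard degeneration strategy; the collar inequality and the IMS algebra in your argument are correct. The problem is the step you yourself flag, and it is a genuine gap, not a technicality. For the lemma as stated, $X$ is a \emph{fixed} surface and the limit is $t\to0$; the quantities $\#\{\mathrm{cusps}\}$ and $\mathrm{area}(X)$ are fixed constants, so the condition ``$X$ has $o(\mathrm{area}(X))$ cusps'' is meaningless, and your bound only gives $\frac{1}{|X|}\bigl(\int_X g_t\bigr)^2=O\bigl((1-a)\,\mathrm{area}(\mathcal{H}^{(t)})/\mathrm{area}(X)\bigr)$, a fixed multiple of $1-a$. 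Since your IMS step only guarantees $a\gtrsim4\epsilon$ while $1-a$ may be of order one, the mean-correction can be comparable to, or larger than, $\|g_t\|^2\approx a$ (e.g.\ genus $0$ with four cusps: total collar area $\approx8$ against $\mathrm{area}(X)=4\pi$), and the Rayleigh-quotient bound collapses. Retreating to ``the intended application'' both changes the statement being proved (the paper invokes the lemma for the fixed surfaces $X_{2k}$ exactly as stated) and would require an additional, unproved probabilistic estimate on the number of cusps of the random cover; note also that replacing the mean-zero test function by a two-dimensional min--max over $\mathrm{span}\{1,g_t\}$ gives back exactly the same corrected denominator, so there is no cheap way around it.

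The missing idea is that one must not discard the whole handle. The outer portion $\{L\le|\rho|<w(t)\}$ of each half-collar, with metric $d\rho^{2}+t^{2}\cosh^{2}\rho\,d\theta^{2}$, is $(1+O(e^{-2L}))$-bi-Lipschitz to a truncated cusp region of $X$ (match circumferences $t\cosh\rho=t e^{u}$), so the eigenfunction should be transplanted there as well; then the transplant nearly inherits the identity $\int_{X^{(t)}}f_t=0$, because only the middle piece $\{|\rho|<L\}$, of area $2t\sinh L\to0$, is cut away, and its contribution to the mean is $O(\sqrt{t\sinh L})$. To keep the denominator one then also needs that the $L^{2}$ mass of $f_t$ in that thin middle piece is small when $\lambda_1(X^{(t)})\le\frac14-\epsilon$ (non-concentration of small-eigenvalue eigenfunctions in the thin part), which requires a further argument -- this controlled transplantation on the thin part is precisely the content of the argument in \cite{BBD}. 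As written, your proof does not close; supplying these two ingredients (transplantation of the outer collar into the cusps, plus non-concentration in the thin middle) would repair it without any hypothesis on the number of cusps.
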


In particular, since each $X_{2k}$ has an even number of cusps, we
can construct a \emph{closed }hyperbolic surface $\tilde{X}_{2k}$
of Euler characteristic $-2k$ and 
\[
\lambda_{1}(\tilde{X}_{2k})\ge\inf\left(\spec(\Delta_{X_{2k}})\cap(0,\infty)\right)-\frac{1}{k}.
\]
On the other hand, the upper bound of Huber \cite{Huber} now implies
$\lambda_{1}(\tilde{X}_{2k})\to\frac{1}{4}$ as required. $\square$

\bibliographystyle{amsalpha}
\bibliography{database}

\newpage{}

\noindent Will Hide, \\
Department of Mathematical Sciences,\\
Durham University, \\
Lower Mountjoy, DH1 3LE Durham,\\
United Kingdom

\noindent \texttt{william.hide@durham.ac.uk}~\\
\texttt{}~\\

\noindent Michael Magee, \\
Department of Mathematical Sciences,\\
Durham University, \\
Lower Mountjoy, DH1 3LE Durham,\\
United Kingdom

\noindent \texttt{michael.r.magee@durham.ac.uk}\\

\end{document}